\pgfplotsset{compat=1.15}
\definecolor{orange}{rgb}{1.0, 0.5, 0.0}
\definecolor{uuuuuu}{rgb}{0.26666666666666666,0.26666666666666666,0.26666666666666666}
\theoremstyle{plain}
\newtheorem{thm}{Theorem}
\newcounter{counter}
\newtheorem{theorem}[counter]{Theorem}
\newtheorem{prop}{Proposition}[section]
\newtheorem{lem}{Lemma}
\newtheorem{rmk}[prop]{Remark}
\newtheorem{question}[prop]{Question}
\newcommand {\R} {\mathbb{R}} \newcommand {\Z} {\mathbb{Z}}
\newcommand {\C} {\mathbb{C}}
\DeclareMathOperator{\dis} {\operatorname{d}}
\numberwithin{equation}{section}
\title{Landis' conjecture: a survey}
\author[A. Fern\'andez-Bertolin]{Aingeru Fern\'andez-Bertolin}
\address[Aingeru Fern\'andez-Bertolin]{Universidad del Pa\'is Vasco / Euskal Herriko Unibertsitatea,
	48080 Bilbao, Spain}
\email{aingeru.fernandez@ehu.eus}
\author[L. Roncal]{Luz Roncal}
\address[Luz Roncal]{BCAM -- Basque Center for Applied Mathematics,
	48009 Bilbao, Spain,
	\newline \phantom{\quad} \&
	Universidad del Pa\'is Vasco / Euskal Herriko Unibertsitatea,
	48080 Bilbao, Spain,
	\newline \phantom{\quad}\&
	Ikerbasque, Basque Foundation for Science,
	48011 Bilbao, Spain,}
\email{lroncal@bcamath.org }
\author[D. Stan]{Diana Stan}
\address[Diana Stan]{University of Cantabria, Department of Mathematics, Statistics and Computation,
Avd. Los Castros 44, 39005 Santander, Spain}
\email{diana.stan@unican.es}
\keywords{Landis conjecture, Schr\"odinger equation, Carleman estimates, vanishing order, quantitative unique continuation, Anderson localization, quasiconformal mappings, criticality theory}
\subjclass[2010]{Primary: 35J10; Secondary: 30C62, 35B05, 35J15, 39A12, 81Q10, 82B44.}
\thanks{This version has appeared in: Harmonic Analysis and Nonlinear Partial Differential Equations, 35--78, RIMS K\^oky\^uroku Bessatsu, B99, Res. Inst. Math. Sci. (RIMS), Kyoto, 2026}
\begin{document}

\begin{abstract}
     We survey Kondrat'ev--Landis' conjecture, providing an up-to-date account of the main advances and describing the techniques developed. We complement the overview with references and formulations of the problem in further closely connected contexts. 
\end{abstract}

\maketitle

\tableofcontents

\section{Introduction}
\label{sec:Intro}

In 1988, Vladimir Aleksandrovich Kondrat'ev and Evgeni\v{\i} Mikha\v{\i}lovich Landis provided an up-to-date survey \cite{KL88} on the most important results of the qualitative theory of second-order elliptic and parabolic equations obtained by various authors over the previous years. In \cite[\S3.5]{KL88} (see also \cite[\S3.5]{KL91}), the following question was raised.

\textit{Suppose $u$ is a solution to the equation
\begin{equation}
\label{eq:H}
H(u):=\Delta u -q(x)u=0,    
\end{equation}
where $\Delta$ is the Laplace operator in $\R^n$ and $q(x)$ is a bounded measurable function in the domain 
$$
\Omega_{\rho}=\{x:|x|>\rho>0\}\subset \R^n, \quad |x|=(x_1^2+\cdots + x_n^2)^{1/2},
$$ 
 such that $|q(x)|\le k^2$, with $k>0$. Moreover, assume that the solution $u$ in $\Omega_{\rho}$ admits the estimate
\begin{equation}
\label{eq:equation}
|u(x)|\le C\exp[-(k+\varepsilon)|x|],
\end{equation}
where $C$ and $\varepsilon$ are positive constants. Do these conditions imply that $u\equiv 0$ in $\Omega_{\rho}$?}

This question is known in the literature as Kondrat'ev--Landis' conjecture, or just Landis' conjecture.
Along the manuscript we will use the terminology \textit{Landis' conjecture}. A weaker version of Landis' conjecture states that if $|u(x)|$ tends to $0$ faster than exponentially at infinity, i.e.,
$$
|u(x)|\le C\exp[-|x|^{1+\varepsilon}], \quad \varepsilon>0, \quad \text{ then } u\equiv0.
$$
This property can be viewed as a \textit{unique continuation at infinity (UCI)} property for solutions satisfying a suitable exponential decay. The question is motivated by the observation that the conjecture is sharp in dimension one, since for a potential $q$ which equals to a constant $c\in\R$ outside a compact set, problem \eqref{eq:H} admits an exponential decaying solution  only if $c=k^2>0$ and, in this case, such a solution decays as $\exp[-|k||x|]$. 
On the other hand, there is an example of a solution to \eqref{eq:H} in $\R^2$ with bounded $q$ that decays exponentially: define $u=\exp[-|x|]$ in $\{|x|>1\}$ and extend it to a $C^2$ smooth positive function on the plane. Then $|\Delta u|\le C|u|$ and, by taking $u\big(\frac{1}{\sqrt{C}}\cdot\big)$ in place of $u$, one can make $|q|\le 1$ in this example. This also motivates the question whether one can construct a nontrivial solution which decays faster than this rate.

A very brief summary of the fundamental advances around the problem are as follows: Landis' conjecture was disproved by V. Z. Meshkov \cite{Me91} who constructed a $q$ and a nontrivial $u$ satisfying $|u(x)|\le C\exp[-C|x|^{4/3}]$. He also showed that if $|u(x)|\le C\exp[-C|x|^{\frac43+}]$\footnote{The notation $|u(x)|\le C\exp[-C|x|^{a+}]$ means that the $|u(x)|$ is decaying as a function $v(x)$ whose decay is faster than $C\exp[-C|x|^{a}]$, namely $\frac{|v(x)|}{C\exp[-C|x|^{a}]}\to 0$ as $|x|\to \infty$. 
}, then $u\equiv 0$.
A quantitative form of Meshkov's result was derived by J. Bourgain and C. E. Kenig in their resolution of Anderson localization for the Bernoulli model \cite{BK05} in higher dimensions. It is worth noting that both $q$ and $u$ constructed by Meskhov are \textit{complex-valued} functions, and actually, for real-valued potentials and solutions, Bourgain--Kenig's quantitative form is an improvement of Meshkov's result. Later on, Kenig, L. Silvestre and J.-N. Wang \cite{KSW15} confirmed Landis' conjecture for any \textit{real} solution $u$ of $\Delta u-\nabla (W)u-qu=0$ in $\R^2$, where $W(x,y)=(W_1(x,y),W_2(x,y))$ and $q(x,y)$ are measurable, \textit{real-valued}, and $q(x,y)\ge 0$ a.e.. Recently, A. Logunov, E. Malinnikova, N. Nadirashvili, and F. Nazarov \cite{LMNN20} confirmed weak Landis' conjecture version for real-valued $q$ in dimension two.  

The aim of this paper is to survey Kondrat'ev--Landis' conjecture, giving an outline of the techniques and ideas of the main advances contained in the breakthroughs above. The current manuscript can be also seen as an extended elaboration of the nice overview by Eugenia Malinnikova \cite{Ma23}. Except for the proof of Lemma \ref{lem:aingeru}, which is due to the second author and Luis Vega, the proofs in this survey are taken from the original articles and presented in a sketched way.  We also complement the article with references and recent contributions directly related to the conjecture, but also to formulations of the problem in other contexts, mentioning further techniques and problems which are, sometimes surprisingly, closely connected with Landis' conjecture. The study of Landis' conjecture has been attracting attention of experts and there is a wide and interesting literature on the topic which we try to collect here. 

\medskip
\noindent{\textbf{Acknowledgments}.}
The authors would like to thank the anonymous referees for an expert
reading and for providing several helpful and detailed comments. 
We are greatly indebted to Biagio Cassano, Ujjal Das, Sylvain Ervedoza, Nicol\`o De Ponti and Eugenia Malinnikova for very helpful discussions and suggestions on the manuscript. After the writing of this manuscript, Fedor Nazarov
sent us a draft \cite{LN24} where  the optimality of \cite{LMNN20} is shown, thus disproving (strong) Landis' conjecture in the plane.

This work was supported by the Research Institute for Mathematical Sciences, an International Joint Usage/Research Center located in Kyoto University.
The second author wishes to thank Neal Bez and Yutaka Terasawa for the organization of the RIMS Symposium on \emph{Harmonic Analysis and Nonlinear Partial Differential Equations} in June 2024, which motivated the writing of this survey.

A. Fern\'andez-Bertolin is supported by the Basque Government through the project IT1615-22 and by the Spanish Agencia Estatal de Investigaci\'on through the project PID2021-122156NB-I00 funded by MICIU/AEI/10.13039/501100011033.

L. Roncal is supported 
by the Basque Government through the BERC 2022--2025 program 
and 
by the Spanish Agencia Estatal de Investigaci\'{o}n
through BCAM Severo Ochoa excellence accreditation CEX2021-001142-S/MCIN/AEI/10.13039/501100011033,
RYC2018-025477-I, CNS2023-143893 funded by MICIU/AEI/10.13039/501100011033 and by the European Union NextGenerationEU/PRTR, and PID2023-146646NB-I00 funded by MICIU/AEI/10.13039/501100011033 and by ESF+, and IKERBASQUE.

D. Stan is supported by the Spanish Agencia Estatal de Investigaci\'{o}n through the project PID2020-114593GA-I00 funded by MICIU/AEI/10.13039/501100011033.

\section{Meshkov's counterexample}
\label{Meshkov}

Shortly after \cite{KL88}, Viktor Zakharovich Meshkov \cite{Me91} investigated the question of the existence of solutions to second-order partial differential equations with decay faster than exponential at infinity. The main \textit{qualitative unique continuation} result, \cite[Theorem 1]{Me91}, states the following. 
\begin{theorem}[{\cite[Theorem 1]{Me91}}]
\label{thm:Me91}
If $u(x)\in H^2_{\operatorname{loc}}(\Omega)$ satisfies the equation \eqref{eq:H} in $\Omega_\rho$ and 
$$
\int_{\Omega_p}|u|^2\exp(2\tau|x|^{4/3})\,dx<\infty,
$$
for all $\tau>0$, then $u\equiv 0$ in $\Omega_{\rho}$.
\end{theorem}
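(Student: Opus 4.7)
The plan is to prove the theorem via a Carleman-type estimate with a radial weight of the form $e^{\tau \phi(|x|)}$ for $\phi(r) = r^{4/3}$. The exponent $4/3$ is dictated by the scaling of the commutator that enters a Carleman inequality when the zero-order term of the equation is merely bounded: the weight must grow fast enough that the resulting coercivity beats $|q| \le k^2$ for large $|x|$, yet slowly enough that the decay hypothesis $\int_{\Omega_\rho} |u|^2 e^{2\tau |x|^{4/3}} \, dx < \infty$ still controls the boundary terms produced by cut-offs.

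First I would establish a Carleman inequality of the form
\[
\tau \int r^{2/3} e^{2\tau r^{4/3}} |v|^2 \, dx \;\le\; C \int e^{2\tau r^{4/3}} |\Delta v|^2 \, dx
\]
for $v \in C^{\infty}_c(\{R_0 < |x| < R_1\})$, with $R_0$ sufficiently large and $\tau \ge \tau_0$. The standard machinery is available: conjugate $\Delta$ by the weight to obtain $L_\tau := e^{\tau \phi} \Delta e^{-\tau \phi} = S + A$, where $S = \Delta + \tau^2 |\nabla \phi|^2$ is symmetric and $A = -2\tau \nabla \phi \cdot \nabla - \tau \Delta \phi$ is antisymmetric; then expand $\|L_\tau v\|^2 = \|Sv\|^2 + \|Av\|^2 + \langle [S,A]v, v\rangle$ and identify the positive part of the commutator. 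For $\phi(r) = r^{4/3}$ this commutator bounds from below a multiple of $\tau r^{2/3} |v|^2$ modulo lower-order contributions, which is the precise coefficient needed to absorb $k^4$ once $\tau r^{2/3} \gtrsim k^4$.

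Next I would apply this estimate to $v = \chi u$, where $\chi$ is a radial cut-off equal to $1$ on $\{2R_0 < |x| < R_1/2\}$ and supported in $\{R_0 < |x| < R_1\}$. Writing $\Delta(\chi u) = \chi q u + E(u)$ with $E(u) = 2\nabla \chi \cdot \nabla u + u \Delta \chi$ supported where $\nabla \chi \ne 0$, the bound $|q| \le k^2$ together with the choice of $R_0$ allows one to absorb the $\chi q u$ contribution into the left-hand side. The hypothesis that the weighted integral of $|u|^2$ is finite for every $\tau$, combined with an interior Caccioppoli-type estimate to control $\nabla u$ by $u$ on annuli, drives the outer boundary portion of $E(u)$ to zero as $R_1 \to \infty$, leaving
\[
\tau \int_{|x|>2R_0} r^{2/3} e^{2\tau r^{4/3}} |u|^2 \, dx \;\le\; C \int_{R_0<|x|<2R_0} e^{2\tau r^{4/3}} \bigl(|u|^2 + |\nabla u|^2\bigr) \, dx,
\]
where the right-hand side is finite and the ratio of the two exponential weights is at most $e^{2\tau (2R_0)^{4/3}}$. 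Sending $\tau \to \infty$ and using Fatou forces $u \equiv 0$ on $\{|x| > 2R_0\}$, and the classical strong unique continuation principle for $\Delta u - qu = 0$ with bounded $q$ (Aronszajn--Carleman, valid for $H^2_{\mathrm{loc}}$ solutions) propagates this zero throughout the connected set $\Omega_\rho$.

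The main obstacle I expect is the Carleman estimate with the \emph{sharp} exponent $4/3$. A naive purely radial conjugation yields only a coercivity of order $\tau^3 r^{3\beta - 4}$, which is positive for any $\beta > 4/3$ but degenerates precisely at $\beta = 4/3$; making the estimate work at the critical exponent requires decomposing into spherical harmonics and treating each angular mode with a one-dimensional Carleman inequality, so that the radial convexity of $\phi$ is balanced against the centrifugal barrier $\ell(\ell+n-2)/r^2$. It is this delicate matching between radial and angular contributions that forces $\beta = 4/3$, and where all the sharpness of the theorem lies.
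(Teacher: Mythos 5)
Your overall route coincides with the paper's: a Carleman inequality with the radial weight $e^{\tau r^{4/3}}$, obtained by conjugating $\Delta$, splitting the conjugated operator into symmetric and antisymmetric parts, extracting positivity from the commutator, applying the estimate to a cut-off $\chi u$, absorbing the contribution of the bounded potential, and sending $\tau\to\infty$. This is exactly the strategy behind Meshkov's Lemma~1 in \cite{Me91}, which the survey quotes as \eqref{eq:carlemanMeshkov}. However, two of your key assertions about the mechanism are wrong, and together they amount to a genuine gap.

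First, the Carleman inequality you propose,
\[
\tau \int r^{2/3} e^{2\tau r^{4/3}} |v|^2 \, dx \;\le\; C \int e^{2\tau r^{4/3}} |\Delta v|^2 \, dx,
\]
is \emph{not} what the commutator analysis yields. For $\phi(r)=r^{\alpha}$ the dominant term in $\langle [S,A]v,v\rangle$ is $\tau^3 (D^2\phi)(\nabla\phi,\nabla\phi)\sim \tau^3\alpha^3(\alpha-1)\,r^{3\alpha-4}$, which at $\alpha = 4/3$ gives a coefficient $\sim\tau^3$ (constant in $r$), plus a gradient term $\tau\int (D^2\phi)(\nabla v,\nabla v)\sim\tau r^{-2/3}|\nabla v|^2$. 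The estimate this produces has $\tau^3$, not $\tau r^{2/3}$, on the left, with matching weights on the two sides; after converting to the measure $dr\,d\theta$ this is precisely Meshkov's inequality \eqref{eq:carlemanMeshkov}. It is the factor $\tau^3$ that absorbs the bounded potential, via $\tau^3\ge 2Ck^4$. Your weaker factor $\tau r^{2/3}$ does not come out of the standard integration-by-parts identity, and it is not clear that your inequality is true as stated; in any case you have no proof of it.

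Second, your diagnosis of the exponent $4/3$ as a degeneracy point of the commutator, forcing a decomposition into spherical harmonics and a balance against a centrifugal barrier, is incorrect. The coefficient $\alpha^3(\alpha-1)$ vanishes at $\alpha=1$, not at $\alpha=4/3$; at $\alpha=4/3>1$ the commutator is unambiguously positive and the ordinary $n$-dimensional Carleman machinery goes through with no angular decomposition. (Meshkov's lemma does cover the full range $2/3<\alpha\le 2$, and the sub-threshold cases $\alpha\le 1$ do require more work, but $4/3$ is not in that regime.) What singles out $\alpha=4/3$ is the matching of weights: the commutator gives $\tau^3 r^{3\alpha-4}|v|^2$ against $|\Delta v|^2$, so the potential term $k^4|v|^2$ can be absorbed uniformly in $r$ exactly when $3\alpha-4\ge 0$, i.e. $\alpha\ge 4/3$; for $\alpha<4/3$ the factor $r^{3\alpha-4}\to 0$ at infinity and absorption fails for large $r$ no matter how large $\tau$ is. Once these two points are fixed the remainder of your argument — Caccioppoli on annuli and a Carleman absorption to send the outer cutoff error to zero, comparing inner-shell and outer-shell weights, and invoking weak unique continuation to propagate the vanishing back to $\Omega_\rho$ — is sound.
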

\begin{proof}[Sketch of proof]
The proof is carried out on the basis of a Carleman inequality \cite[Lemma 1]{Me91}. Let $(r,\theta)$ be the polar coordinates in the Euclidean space $\R^n$, $r=|x|=(x_1^2+\cdots + x_n^2)^{1/2}$ and $\theta=x/|x|\in \mathbb{S}^{n-1}$. Denote by $d\theta$ the area element of the unit sphere $\mathbb{S}^{n-1}\subset \R^n$.
Let $2/3<\alpha\le 2$. Then there exist a number $\tau_0>0$ and a constant $C=C_{\alpha}$ such that, for all $C_0^{\infty}$-functions $v$ with support in the set $\Omega_1=\{x\in \R^n: |x|>1\}$ and any $\tau>\tau_0$,
$$
    \tau^3\int |v|^2r^{3\alpha-3}\exp[2\tau r^{\alpha}]\,dr\,d\theta\le C \int |\Delta v|^2 r\exp[2\tau r^{\alpha}]\,dr\,d\theta.
$$
Note that, for $\alpha=4/3$ and sufficiently large $\tau$, in view of the boundedness of $q$, under the conditions above on the function $v$ we obtain  
\begin{equation}
\label{eq:carlemanMeshkov}
 \frac{\tau^3}{2}\int |v|^2r\exp[2\tau r^{4/3}]\,dr\,d\theta\le C \int |(\Delta-q) v|^2 r\exp[2\tau r^{4/3}]\,dr\,d\theta.
\end{equation}
Applying a cut-off $C^{\infty}$-function $h$ which vanishes in some neighborhood of the origin and is equal to one in a neighborhood of infinity, it is easy to verify that $v=hu$ verifies the Carleman inequality \eqref{eq:carlemanMeshkov}. Further absorption arguments and interior Schauder estimates are applied to conclude the proof.
\end{proof}

In the same work, Meshkov presents a construction in the plane $\R^2$ of an example of equation \eqref{eq:H} that has a nonzero solution $u$, for which the estimate $|u(x)|\le C\exp(-c|x|^{4/3})$, for all $x\in \R^2$, is realized with some positive constants $c$ and $C$. This example gives a negative answer to Landis' conjecture. As in the Introduction, we want to emphasize that in Meshkov's counterexample both $q$ and $u$ are \textit{complex-valued} functions. A generalization of Meshkov's example was shown by J. Cruz-Sampedro in \cite{Cruz99}, and consequent follow-ups addressing missing cases of constructions of solutions in $\R^2$ that satisfy an elliptic eigenvalue equation and have the optimal rate of decay at infinity are presented in the more recent works by B. Davey \cite{Davey14, Davey15}. 
We also refer to \cite{DZZ08}, where T. Duyckaerts, X. Zhang, and E. Zuazua observed that an easier construction of examples with critical decay can be done considering \textit{vector-valued} solutions to the equation \eqref{eq:CS} below: a $\C^4$-valued solution $u$ is constructed for \eqref{eq:CS} in $\R^3$ so that $|u(x)|\le \exp[-C|x|^{4/3}]$ for some $C>0$ and for all $x\in \R^3$. In \cite{DZZ08}, the potential constructed is non-symmetric (in the same spirit as Meshkov) and has a logarithmic growth at infinity: so it is not bounded, but it is almost bounded when measuring with respect to power like growth. 

In order to give some insight, we state and sketch here the result and construction by Cruz-Sampedro\footnote{We have kept the original formulation by Cruz-Sampedro, which is slightly different to Kondrat'ev--Landis' conjecture referred to the equation \eqref{eq:H}. From the thesis that $u$ has compact support, using the uniqueness of solutions to the Schr"odinger equation (see, for example, \cite[Theorem 13.63]{RS78}), we find that $u\equiv0$.}, which follow the same ideas as in \cite{Me91}.

\begin{thm}[{\cite[Theorem 1]{Cruz99}}]
\label{thm:Cruz99}
Let $\varepsilon\in\R$ be given and suppose $q$ is a measurable complex-valued function on $\R^n$ that satisfies 
\begin{equation}
    \label{eq:potential}
q(x)=O(|x|^{-\varepsilon})\quad \text{ as } \quad|x|\to\infty.
\end{equation}
Let $3\delta=\max\{0,1-2\varepsilon\}$. Let  $u(x)\in H^2_{\operatorname{loc}}(\R^n)$ be a nonzero solution to
\begin{equation}
\label{eq:CS}
-\Delta u+q u=E u,\qquad E\in \R
\end{equation}
that satisfies
$$
\exp(\tau|x|^{1+\delta})u\in L^2(\R^n)
$$
for all $\tau>0$. Then $u$ has compact support.
\end{thm}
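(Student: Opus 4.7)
The plan is to run Meshkov's argument from the sketch of Theorem \ref{thm:Me91}, replacing the Carleman weight $\exp[\tau r^{4/3}]$ by $\exp[\tau r^{1+\delta}]$ so as to match the decay rate of the potential. The exponent $1+\delta$, defined through $3\delta = \max\{0, 1-2\varepsilon\}$, is chosen precisely so that a potential satisfying \eqref{eq:potential} can be absorbed into a Meshkov-type Carleman estimate; the algebraic identity $r^{1-2\varepsilon} \leq r^{3\delta}$ for $r \geq 1$ is exactly what matches the decay of $q$ to the polynomial weight on the left-hand side of the estimate. The case $\varepsilon = 0$ then gives $1+\delta = 4/3$ and recovers Meshkov's setting exactly.

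The first step is to establish a Carleman inequality of the form
\begin{equation*}
\tau^3 \int |v|^2 r^{3\delta} \exp[2\tau r^{1+\delta}]\, dr\, d\theta \leq C \int |\Delta v|^2 r \exp[2\tau r^{1+\delta}]\, dr\, d\theta
\end{equation*}
for $v \in C_0^\infty(\{|x| > 1\})$ and $\tau > \tau_0$, as a direct extension of \cite[Lemma 1]{Me91} to $\alpha = 1+\delta$. The derivation proceeds by writing $\Delta$ in polar coordinates, conjugating by $e^{\tau r^{1+\delta}}$, splitting into symmetric and antisymmetric parts, and tracking the positive commutator so as to produce both the cubic gain in $\tau$ and the polynomial factor $r^{3(1+\delta) - 3} = r^{3\delta}$. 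The identity above combined with \eqref{eq:potential} then yields
\begin{equation*}
\tau^3 \int |v|^2 r^{3\delta} \exp[2\tau r^{1+\delta}]\, dr\, d\theta \leq C \int |(\Delta - q + E) v|^2 r \exp[2\tau r^{1+\delta}]\, dr\, d\theta
\end{equation*}
for all $v \in C_0^\infty(\{|x| > R_1\})$, with $R_1$ chosen large enough that \eqref{eq:potential} holds there and $\tau$ sufficiently large.

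Second, I would apply the resulting inequality to $v = hu$, where $h$ is a smooth cutoff equal to $1$ on $\{r > R_1+1\}$ and vanishing on $\{r < R_1\}$; the hypothesis $\exp(\tau|x|^{1+\delta})u \in L^2$ for every $\tau > 0$ guarantees convergence of all integrals and legitimises extending the inequality to $v = hu$ by density. Since $(\Delta - q + E)u = 0$, the right-hand side reduces to the commutator $u\Delta h + 2\nabla h \cdot \nabla u$, supported in the bounded annulus $\{R_1 < r < R_1+1\}$ and controlled, via interior Schauder estimates, by $C_u \exp[2\tau(R_1+1)^{1+\delta}]$ with $C_u$ independent of $\tau$. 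The resulting bound
\begin{equation*}
\tau^3 \int_{r > R_1+1} |u|^2 r^{3\delta} \exp\bigl[2\tau\bigl(r^{1+\delta} - (R_1+1)^{1+\delta}\bigr)\bigr]\, dr\, d\theta \leq C_u
\end{equation*}
then forces $u \equiv 0$ on $\{|x| > R_1+1\}$ upon sending $\tau \to \infty$, by monotone convergence, which is the desired compact support.

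The main obstacle is the Carleman inequality of the first step. Although the conceptual template is classical, the commutator computation must be performed carefully for a non-integer exponent in order to secure simultaneously the cubic gain in $\tau$ and the precise polynomial factor $r^{3\delta}$ matched to $q$; this matching is what pins down the critical rate $1+\delta$ and explains the sharpness of the hypothesis. One delicate point is the absorption of the constant $E$: when $\delta < 1/3$ (i.e.\ $\varepsilon > 0$) the factor $r$ arising from $|Ev|^2 r$ on the right-hand side is not dominated by $\tau^3 r^{3\delta}$ for large $r$, so the Carleman inequality has to be replaced by a variant whose polynomial weight is rebalanced to incorporate the constant lower-order term. The remaining cutoff argument and passage $\tau \to \infty$ are a routine adaptation of Meshkov's original proof.
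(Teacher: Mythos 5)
Your overall plan---replace the Meshkov Carleman weight $\exp[\tau r^{4/3}]$ by $\exp[\tau r^{1+\delta}]$, take the polynomial factor $r^{3\delta}=r^{3\alpha-3}$ that comes out of \cite[Lemma~1]{Me91} with $\alpha=1+\delta$, and match it to the potential via $r^{1-2\varepsilon}\le r^{3\delta}$---is indeed the route behind Theorem~\ref{thm:Cruz99}, and your algebra checking that $3\delta=\max\{0,1-2\varepsilon\}$ is exactly what absorbs $q$ is correct, as are the cutoff, density, and $\tau\to\infty$ steps at the end.

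The genuine gap is the treatment of the spectral parameter $E$. You flag this yourself in the last paragraph as a ``delicate point,'' but it is not a delicacy: it is the entire content of Theorem~\ref{thm:Cruz99} beyond Meshkov's case. When $\varepsilon>0$ one has $3\delta<1$, so $|Ev|^2\,r$ grows in $r$ strictly faster than $\tau^3|v|^2 r^{3\delta}$ on the unbounded support of $v$, and no choice of $\tau$ can repair that, since $\tau$ only scales a constant. Nor can you fold $E$ into the potential by setting $\tilde q=q-E$: that makes $\tilde q$ merely bounded and only returns Meshkov's $4/3$ threshold, not the improved exponent $1+\delta<4/3$ the theorem asserts. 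The only way to profit from the extra decay of $q$ when $\varepsilon>0$ is to build the constant $E$ into the operator that is conjugated, i.e., to prove a Carleman estimate adapted to the Helmholtz-type operator $-\Delta-E$ rather than to $-\Delta$; this is precisely where the restricted-range results of Froese--Herbst--Hoffmann-Ostenhof--Hoffmann-Ostenhof, and Cruz-Sampedro's extension of them, do the real work. Saying the Carleman inequality ``has to be replaced by a variant whose polynomial weight is rebalanced'' names the symptom but gives no mechanism, so the proof as written does not go through for any $\varepsilon>0$. As a secondary issue, the lemma you cite is only stated for $2/3<\alpha\le 2$, i.e.\ $\varepsilon\ge -1$; the theorem allows all $\varepsilon\in\R$, and the range $\varepsilon<-1$ (where $\alpha=1+\delta>2$) would also need a Carleman estimate you have not supplied.
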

We emphasize that this result shows that, if we allow a faster decay on the potential $q$, then we can prove Landis' conjecture with a slower decay assumption on $u$. On the other hand, if we allow the potential to grow, then we need to assume faster decay of the solution. 
Theorem \ref{thm:Cruz99}, also proven through a \textit{Carleman-type estimate}, generalizes \cite[Theorem 1]{Me91}, which corresponds to taking $\varepsilon =0$ (and hence $\delta=1/3$).  Similar results, in restricted ranges of $\varepsilon$, were previously obtained by Froese, Herbst, T. Hoffmann-Ostenhof, and M. Hoffman-Ostenhof  \cite{FHHH82,FHHH83}.
Cruz-Sampedro's generalization of Meshkov's example is as follows.
\begin{thm}[{\cite[Theorem 2]{Cruz99}}]
\label{thm:Cruz99_2}
Let $\varepsilon<1/2$ and $\delta>0$ satisfy $2\varepsilon+3\delta =1$. Then there exist a continuous complex-valued function $q$ on $\R^2$ satisfying \eqref{eq:potential} and a $C^2$-function $u$ which does not have compact support and satisfies $\Delta u -q(x)u=0$ on $\R^2$ and 
\begin{equation}
\label{eq:decay}
u(x)=O\big(\exp(-\beta|x|^{1+\delta})\big)\quad \text{ as } \quad |x|\to \infty, \text{ for some } \beta>0.
\end{equation}
\end{thm}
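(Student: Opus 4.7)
The plan is to follow the strategy of Meshkov \cite{Me91}, which produces a counterexample for the critical parameters $\varepsilon=0$, $\delta=1/3$, and adapt the scaling to the full one-parameter family $2\varepsilon+3\delta=1$, $\varepsilon<1/2$. Work in polar coordinates $(r,\theta)$ in $\R^2$ and partition the exterior $\{r>R_0\}$ into concentric annuli $A_j=\{R_j\le r<R_{j+1}\}$ with $R_j\to\infty$. On each $A_j$ the solution $u$ is designed to be essentially concentrated on one angular mode of the form $e^{-ik_j\theta}$, with integer angular frequencies $k_j$ increasing with $j$. The potential is recovered \emph{a posteriori} from $q(x)=\Delta u(x)/u(x)$ at every point where $u\ne 0$, and the construction must simultaneously guarantee (i) $u$ does not vanish in $\{r>R_0\}$ so that $q$ is well-defined, (ii) $|u(x)|\lesssim\exp(-\beta|x|^{1+\delta})$, and (iii) $|q(x)|\lesssim|x|^{-\varepsilon}$.

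The elementary building blocks are the two-dimensional harmonic monomials $z^{-k}=r^{-k}e^{-ik\theta}$, which satisfy $\Delta u=0$ exactly. On the interior of each annulus $A_j$ I would take $u=C_j z^{-k_j}$ for an appropriate normalization constant $C_j$, so that $\Delta u=0$ and hence $q\equiv 0$ there. The angular indices are chosen with growth rate $k_j\sim R_j^{1+\delta}$, and matching the normalizations by continuity $C_{j+1}R_{j+1}^{-k_{j+1}}=C_j R_{j+1}^{-k_j}$ telescopes into
\begin{equation*}
\log|u(r)|\sim -\int^{r}\frac{k(s)}{s}\,ds\sim -\frac{r^{1+\delta}}{1+\delta},
\end{equation*}
which realizes the target decay \eqref{eq:decay}. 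The radial decay thus comes entirely from the harmonic profile $r^{-k}$ with increasing $k$, not from an explicit exponential factor.

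The principal obstacle is the switch from the mode $z^{-k_j}$ to $z^{-k_{j+1}}$, since a naive convex combination of the two harmonic pieces produces zeros of $u$ along $k_{j+1}-k_j$ rays where the two phases cancel, making $q=\Delta u/u$ unbounded there. Meshkov's device is to perform the switch in a narrow radial collar $T_j$ of width $\ell$ and to replace the additive interpolation by a multiplicative one, conjugating one mode by a slowly varying holomorphic factor whose zeros can be arranged to lie off the annular support. Under this construction, the derivatives of the transition profile contribute to $\Delta u$ with a term of order $k_j/(r\ell)$ times the dominant mode, which forces $\ell\gtrsim R_j^{\delta+\varepsilon}$ in order to meet (iii). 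On the other hand, the integer quantization $k_{j+1}-k_j\in\N$ and the requirement that each transition keep $u$ nonvanishing impose a lower bound on the \emph{relative} scale $\ell/R_j$, not just on $\ell$; optimizing these against one another and the total radial decay constraint produces the algebraic relation $2\varepsilon+3\delta=1$ asserted in the statement. The hard part of the proof is precisely the construction of the transition profile realizing this balance while preserving the nonvanishing of $u$.

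Once the local construction is fixed, concatenating the pieces across all annuli with $C^2$ matching, extending $u$ smoothly inside the disc $\{r\le R_0\}$ to any nonvanishing $C^2$ function, and summing the local estimates yields the required global bounds \eqref{eq:potential} and \eqref{eq:decay}. The specialization $\varepsilon=0$, $\delta=1/3$ recovers Meshkov's original example \cite[Theorem 2]{Me91}.
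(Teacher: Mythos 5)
Your outline captures the broad architecture of Cruz-Sampedro's construction correctly: build on concentric annuli, use angular modes $z^{-n}$ of increasing frequency $n_j\sim\rho_j^{1+\delta}$, recover $q=\Delta u/u$ a posteriori, and obtain the decay \eqref{eq:decay} by telescoping the logarithmic decrements. The heuristic scaling that produces the constraint on the width of the transition layer is also essentially right. But the proposal stops short of the one place where the hard analysis lives, and the sketch you give of how that step works does not match what actually has to happen.

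You correctly identify that a naive interpolation $\Psi_1 z^{-n}+\Psi_2 z^{-n-k}$ must vanish along rays, and that this is the central obstruction. However, your proposed fix --- ``replace the additive interpolation by a multiplicative one, conjugating one mode by a slowly varying holomorphic factor whose zeros can be arranged to lie off the annular support'' --- is not what Meshkov or Cruz-Sampedro do, and it is not clear it can be made to work while keeping $|q|\lesssim r^{-\varepsilon}$. The actual transition is still additive, $u=\Psi_1 u_1+\Psi_2 u_2$, but $u_2$ is taken with a \emph{modified angular phase} $u_2=-b r^{-n+2k}\exp[iF(\theta)]$ where $F(\theta)=(n+2k)\theta+\Phi(\theta)$ and $\Phi$ is a carefully chosen periodic perturbation; the geometry is designed so that on a union of sectors $S_m$ the combination is exactly harmonic ($q\equiv0$ there), while on the complementary sectors one shows the pointwise lower bound
$\bigl|\exp[iG(\theta)]-\tfrac{1}{br^{2k}}\bigr|\ge\sin(\pi/7)$
with $G(\theta)=F(\theta)+n\theta$. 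This lower bound is obtained by pinning $G(\theta)$ away from multiples of $2\pi$, i.e.\ by controlling the \emph{imaginary part} of $\exp[iG(\theta)]$; this is precisely where the complex-valuedness of $u$ and $q$ is used, and it is what makes the nonvanishing possible. Your write-up never identifies this mechanism, and without it the claim that $q=\Delta u/u$ stays bounded by $C r^{-\varepsilon}$ has no support. There are also two lesser inaccuracies: the transition does not happen in a narrow collar but occupies essentially the whole annulus (the harmonic pieces $z^{-n}$ appear only in the thin boundary layers $A(0,0.1)$ and $A(5.9,6)$), and the relation $2\varepsilon+3\delta=1$ comes from balancing the angular cross term $nk/r^2\lesssim r^{-\varepsilon}$ against the choices $n\sim r^{1+\delta}$, $k\sim r^{(1+\delta)/2}$ rather than from a trade-off against the ``total radial decay constraint.'' In short, you have the scaffolding and the enumeration of constraints, but the load-bearing lemma --- Lemma \ref{lem:cruz}, Steps 1--4 in the paper, especially the $\sin(\pi/7)$ lower bound in Step 1 --- is missing, and the proposal as written would not constitute a proof.
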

Theorem \ref{thm:Cruz99_2} means that, for complex-valued potentials, Theorem \ref{thm:Cruz99} is optimal for all $\varepsilon\in \R$. The general strategy used was that of Meshkov, but Cruz-Sampedro also took strong inspiration from \cite{SS83}.

\subsection{The description of the construction: Cruz-Sampedro's generalization}

To give the construction of the example in Theorem \ref{thm:Cruz99_2}, we first construct solutions on annular regions. The annular constructions are described in Lemma \ref{lem:cruz}. With Lemma \ref{lem:cruz} at hand, the proof of Theorem \ref{thm:Cruz99_2} consists of showing that the solutions on annuli can be put together to give solutions over all of $\R^2$ with the appropriate decay.  
As before, we use polar coordinates $r,\theta$ in the plane $\R^2$. For $\rho>0$ we denote by $A(\alpha,\beta)$ the annulus in $\R^2$ defined by $\rho+\alpha \rho^{(1-\delta)/2}\le r\le \rho +\beta\rho^{(1-\delta)/2}$.

\begin{lem}[{\cite[Lemma 3.1]{Cruz99}}]
\label{lem:cruz}
Let $\delta>0$ and $\varepsilon\in \R$ satisfy $2\varepsilon+3\delta=1$. For a fixed and large $\rho>0$ let $n$ and $k$ be positive integers such that $|n-\rho^{1+\delta}|\le 1$ and $|k-6(1+\delta)\rho^{(1+\delta)/2}|\le 1+20\delta (1+\delta)$. Then there exist complex-valued functions $u$ and $q$ on $A(0,6)$ possessing the following properties (see Figure \ref{fig:annili}):
\begin{enumerate}
\item[(a)] The function $u$ is of class $C^2$ and satisfies
$$
\Delta u+qu=0\quad \text{ on } \quad A(0,6).
$$
\item[(b)] There exists a constant $C$ independent of $\rho, n$, and $k$ such that 
\begin{equation}
\label{eq:decay_potential}
|q(r,\theta)|\le Cr^{-\varepsilon}\quad \text{ on } \quad A(0,6).
\end{equation}
\item[(c)] For a constant $a>0$ we have 
$$
u(r,\theta)=\begin{cases}
r^{-n}\exp(-in\theta)\quad &\text{ on } \quad A(0,0.1),\\
ar^{-n-k}\exp[-i(n+k)\theta]\quad &\text{ on } \quad A(5.9,6).
\end{cases}
$$
From this it follows that $q\equiv 0$ on the annuli $A(0,0.1)$ and $A(5.9,6)$.
\item[(d)] Let $m(r)=\max\{|u(r,\theta)|, 0\le \theta\le 2\pi\}$. Then
$$
\log m(r)-\log m(\rho)\le \log 2-\frac16\int_{\rho}^rt^{\delta}\,dt, \quad \text{ for } \quad \rho\le r\le \rho+6\rho^{(1-\delta)/2}.
$$
\end{enumerate}
\end{lem}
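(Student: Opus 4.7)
The plan is to build $u$ piecewise on five concentric subannuli of $A(0,6)$, prescribing the harmonic modes $r^{-n}e^{-in\theta}$ and $a r^{-n-k}e^{-i(n+k)\theta}$ on the innermost $A_1:=A(0,0.1)$ and outermost $A_5:=A(5.9,6)$, interpolating the amplitude through two buffer subannuli $A_2:=A(0.1,2.9)$ and $A_4:=A(3.1,5.9)$ at fixed angular frequency, and performing a ``mode switch'' from $n$ to $n+k$ on the thin central subannulus $A_3:=A(2.9,3.1)$. On $A_1$ and $A_5$ harmonicity gives $q\equiv 0$ and property (c) for free; the problem is to keep $q:=-\Delta u/u$ bounded by $Cr^{-\varepsilon}$ with $\varepsilon=(1-3\delta)/2$ on the remaining subannuli while matching $C^2$ across the interfaces. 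The amplitude $a>0$ and the matching data are fixed by reading off the end values.

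On each buffer $A_j$ ($j=2,4$) I would use the pure-mode ansatz $u(r,\theta)=r^{-n_j}g_j(r)e^{-in_j\theta}$ with $n_1=n$, $n_2=n+k$, and $g_j$ smooth, equal to $1$ at the boundary with $A_1$ (respectively $a$ at the boundary with $A_5$). A direct polar computation gives
$$q(r)=-\frac{g_j''(r)}{g_j(r)}+\frac{(2n_j-1)g_j'(r)}{r\,g_j(r)}-\frac{n_j}{r^2},$$
whose dominant middle term has size $\sim n_j\rho^{-(1-\delta)/2}/\rho\sim \rho^{(3\delta-1)/2}=r^{-\varepsilon}$, using $n_j\sim\rho^{1+\delta}$, $r\sim \rho$ and that $g_j$ is arranged to move by an $O(1)$ factor over a radial interval of length $\sim \rho^{(1-\delta)/2}$. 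On the switching subannulus $A_3$ I would take a superposition
$$u(r,\theta)=\phi(r)\,r^{-n}e^{-in\theta}+\psi(r)\,r^{-n-k}e^{-i(n+k)\theta},$$
with $\phi,\psi$ (and their first two derivatives) matched to $g_1,g_2$ at the interfaces with $A_2,A_4$, so that $\psi$ vanishes on the inner boundary of $A_3$ and $\phi$ vanishes on the outer one. Since both basis functions are harmonic, only derivatives of $\phi,\psi$ contribute to $\Delta u$:
$$\Delta u=\Bigl(\phi''+\frac{(1-2n)\phi'}{r}\Bigr)r^{-n}e^{-in\theta}+\Bigl(\psi''+\frac{(1-2(n+k))\psi'}{r}\Bigr)r^{-n-k}e^{-i(n+k)\theta}.$$

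The hard part will be the central subannulus: the two angular modes $e^{-in\theta}$ and $e^{-i(n+k)\theta}$ can interfere destructively and force $|u|$ to zero, which would blow up $|q|=|\Delta u/u|$. Overcoming this is exactly what the hypotheses $n\sim\rho^{1+\delta}$ and $k\sim 6(1+\delta)\rho^{(1+\delta)/2}$ are calibrated for: the specific size of $k$ ensures that $r^{-k}$ changes by a definite $O(1)$ factor across the thin subannulus $A_3$ of width $0.2\rho^{(1-\delta)/2}$, which permits choosing $\phi,\psi$ so that the radial envelopes $\phi r^{-n}$ and $\psi r^{-n-k}$ never cross (for instance, by forcing their supports to overlap only on a subinterval where one of them strictly dominates) and the minimum over $\theta$ of $|u|$ stays above a constant multiple of the geometric mean $r^{-n-k/2}$. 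Combined with $|\phi'|,|\psi'|\lesssim\rho^{-(1-\delta)/2}$, this lower bound yields $|q|\lesssim r^{-\varepsilon}$ on $A_3$ as well. Finally, (d) follows by observing that on each subannulus $m(r)$ is governed by the dominant radial factor ($r^{-n}$ or $r^{-n-k}$): a direct integration using $n\ge \rho^{1+\delta}/6$ and $\int_\rho^r t^\delta\,dt\sim \rho^\delta(r-\rho)$ for $r\in[\rho,\rho+6\rho^{(1-\delta)/2}]$ produces the claimed bound, with the $\log 2$ absorbing the bounded matching constants across the subannulus interfaces.
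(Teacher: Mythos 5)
Your outline handles the easy parts as the paper does --- harmonicity on $A_1,A_5$ giving (c) and $q\equiv0$ for free, the pure-mode estimate for $q$ on the buffers with dominant term $\sim n_jg_j'/(rg_j)\sim\rho^{-\varepsilon}$, and (d) from the dominant radial factor --- but it fails at the central subannulus, the step you yourself flag as hard, and the remedy you sketch is impossible. On $A_3$ you set $u=\phi(r)r^{-n}e^{-in\theta}+\psi(r)r^{-n-k}e^{-i(n+k)\theta}$ with $C^2$ cutoffs, $\psi=0$ at the inner interface and $\phi=0$ at the outer. At any radius $r^*$ where the envelopes agree, $\phi(r^*)(r^*)^{-n}=\psi(r^*)(r^*)^{-n-k}$, choosing $\theta^*$ with $e^{ik\theta^*}=-1$ (possible since $k\ge 1$) gives $u(r^*,\theta^*)=0$ while $\Delta u(r^*,\theta^*)$ is generically nonzero, so $q=-\Delta u/u$ is unbounded. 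Such an $r^*$ is unavoidable: on the interval where both cutoffs are positive (nonempty, or else $u\equiv 0$ on a ring) the first envelope starts strictly above and ends strictly below the second, so they cross by the intermediate value theorem. You also cannot push the crossing into a region where both cutoffs already equal $1$, which would make $u$ a sum of two pure harmonic modes and $q\equiv0$ even at zeros: with your amplitudes $r^{-n}$ and $r^{-n-k}$ and $r\approx\rho\gg1$ there, equality $r^{-n}=r^{-n-k}$ would force $r=1\notin A_3$; the crossing occurs only when $\phi$ has already fallen to the super-polynomially small level $\sim r^{-k}$, deep in its ramp. The claimed lower bound $|u|\gtrsim r^{-n-k/2}$ therefore fails, and so does (b).

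The paper's construction dodges this obstruction, and that is exactly where complex-valuedness is indispensable. Meshkov/Cruz-Sampedro do not switch directly from $e^{-in\theta}$ to $e^{-i(n+k)\theta}$. Step~1 transitions $u_1=r^{-n}e^{-in\theta}$ to $u_2=-br^{-n+2k}e^{iF(\theta)}$, with $F(\theta)=(n+2k)\theta+\Phi(\theta)$ and $\Phi$ a $T$-periodic angular phase that is linear of slope $-4k$ on subsectors $S_m$ and ramps on the complementary $P_m$. Two features missing from your scheme do the work. First, the intermediate amplitude exponent $-n+2k$ is \emph{larger} than $-n$, and $b$ is calibrated so that the envelope crossing of $u_1$ and $u_2$ occurs at a radius where both cutoffs $\Psi_1,\Psi_2$ equal $1$; on the $S_m$ sectors $u_2$ is a pure harmonic mode there (radial and angular indices match), so $u=u_1+u_2$ is locally harmonic and $q\equiv0$ even at zeros. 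Second, on the $P_m$ sectors one uses the factorization $|u|=|u_2|\bigl|e^{iG(\theta)}-1/(br^{2k})\bigr|$ with $G=F+n\theta$: since $1/(br^{2k})$ is positive real while $G$ winds through a full $2\pi$ across each $P_m$ staying at least $\pi/7$ away from $0\pmod{2\pi}$, the imaginary part of $e^{iG(\theta)}$ alone gives the uniform lower bound $\sin(\pi/7)$. This imaginary-part argument has no real-valued analogue, and is precisely what makes a superposition of interfering modes survive where yours would vanish. Steps~2--4 then carry the angular mode to $e^{-i(n+k)\theta}$ through intermediate amplitude exponents $-n+2k\to-n-2k\to-n-k$, with the last superposition again arranged so that its envelope crossing lands where both cutoffs are $1$ and $u$ is harmonic.
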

One motivation behind considering the functions in Lemma \ref{lem:cruz} can be seen from the polar form of the Laplacian and the corresponding form of the solutions.
\begin{figure}
\includegraphics[scale=0.3]{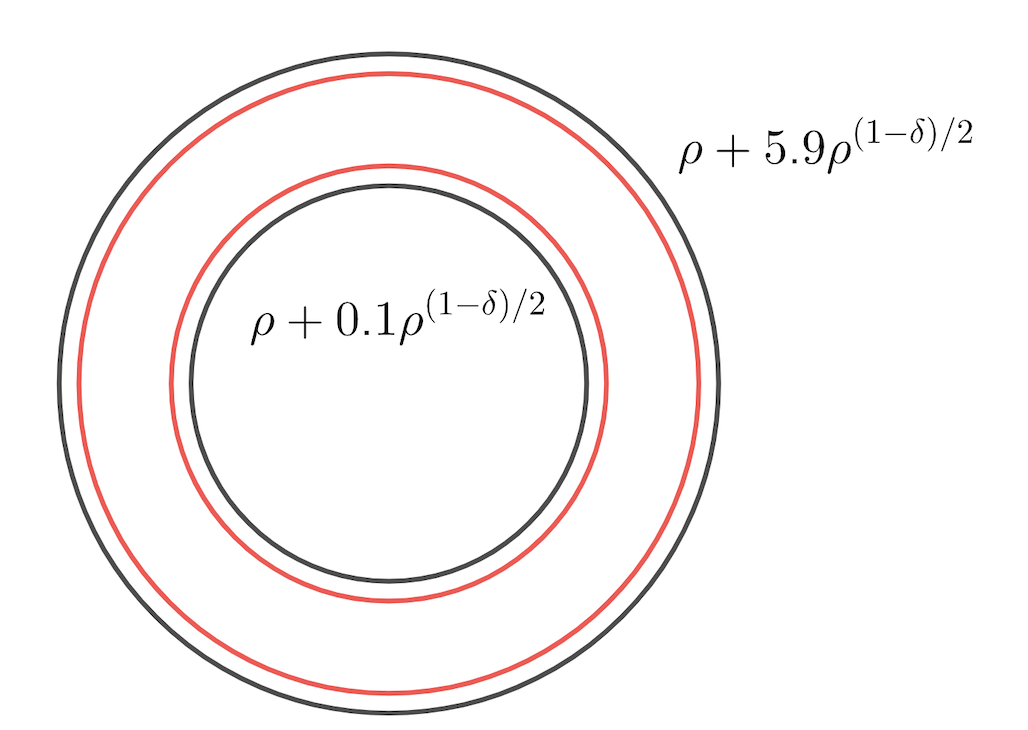}
\caption{Cruz-Sampedro's generalization of Meshkov's construction, Lemma \ref{lem:cruz} (c): $u(r,\theta)=r^{-n}\exp(-in\theta)$ on the inner thin annulus and $u(r,\theta)=ar^{-n-k}\exp[-i(n+k)\theta]$ on the outer thin annulus;
$|q(r,\theta)|\le Cr^{-\varepsilon}$ on the whole annulus and $q\equiv 0$  on the thin annili.
}
  \label{fig:annili}
  \end{figure}
\subsubsection{Sketch of the proof of Theorem \ref{thm:Cruz99_2}}
We use Lemma \ref{lem:cruz} to construct the example that proves Theorem~\ref{thm:Cruz99_2}. We recursively define a sequence of numbers $\{\rho_j\}_{j=1}^{\infty}$ and for $\rho_1$ we choose a sufficiently large positive number. Then, if $\rho_j$ has been chosen, we set $\rho_{j+1}=\rho_j+6\rho_j^{(1-\delta)/2}$. Then we define
$n_j:=[\rho_j^{1+\delta}]$, where $[x]=\max\{n\in \Z: n\le x\}$ and $k_j:=n_{j+1}-n_j$. It can be proved that, if $\rho_1$ is large, we may assume that $|k_j-6(1+\delta)\rho_j^{(1+\delta)/2}|\le 1+ 20\delta (1+\delta)$. 

For $j=1,2,\ldots$, let $a_j$ be constants, and let $u_j$ and $q_j$ be functions constructed on $\rho_j\le r\le \rho_{j+1}$ as in Lemma \ref{lem:cruz}. The decay estimate required for potentials is given by (b) from Lemma \ref{lem:cruz}. Part (c) of Lemma \ref{lem:cruz} shows that $u_j(\rho_j,\theta)=\rho_j^{-n_j}\exp[-in_j\theta]$ and $u_j(\rho_{j+1},\theta)=a_j\rho^{-n_{j+1}}_{j+1}\exp[-in_{j+1}\theta]$. Since $\rho_j\to\infty$ as $j\to \infty$, then for $r>\rho_1$ we set 
$$
q(r,\theta)=q_j(r,\theta)\quad \text{ and } \quad u(r,\theta)=A_ju_j(r,\theta)\quad  \text{ for } \rho_j\le r\le \rho_{j+1}, 
$$
where we define $A_j=a_0\cdot a_1\cdot\ldots\cdots a_{j-1}$, for $j=1,2,\ldots$, and $a_0=1$. Then $q$ satisfies \eqref{eq:potential} and $u$ is of class $C^2$ and satisfies $-\Delta u+qu=0$ on $\Omega_{\rho_1}$. 

To prove that $u$ satisfies \eqref{eq:decay}, we set $m(r)=\max\{|u(r,\theta)|, 0\le \theta\le 2\pi\}$ for $r>\rho_1$ and pick $\ell$ so that $\rho_{\ell}\le r\le \rho_{\ell+1}$. Then
$$
\log m(r)=\big(\log m_{\ell}(r)-\log m_{\ell}(\rho_{\ell})\big)+\cdots+\big(\log m_{1}(\rho_2)-\log m_{1}(\rho_{1})\big)+\log m_{1}(\rho_1),
$$
where $m_j(r)=\max\{|u_j(r,\theta)|, 0\le \theta\le 2\pi\}$ is as in (d) of Lemma \ref{lem:cruz}, noting that $a_j=\frac{m_{j}(\rho_{j+1})}{m_{j+1}(\rho_{j+1})}$. Using (d) we find that
$$
\log m(r)\le \ell \log 2-\frac16\int_{\rho_1}^rt^{\delta}\,dt+\log m(\rho_1).
$$
Thus if $\delta>1$ then $\log m(r)\le Cr^{(1+\delta)/2}-cr^{1+\delta}$, and if $0<\delta\le 1$ then $\log m(r)\le Cr-cr^{1+\delta}$, where $C$ and $c$ are positive constants. Since $\delta>0$, for $r$ sufficiently large we have $
0<m(r)\le C\exp[-\beta r^{1+\delta}]$,
for some $\beta>0$. Finally, $q$ and $u$ defined above can be extended to $\R^2$ in a way that Theorem~\ref{thm:Cruz99_2} is satisfied.

\subsubsection{Sketch of the proof of Lemma \ref{lem:cruz}} We present a brief description of the main points in the proof. For a complete and detailed proof, see \cite[Lemma]{Me91}, \cite[Lemma 3.1]{Cruz99}, \cite[Lemma 6.1]{Davey14}, or \cite[Lemma 2.1]{Davey15}. The idea in the construction is that, as $r$ increases from $\rho$ to $\rho +6\rho^{(1-\delta)/2}$, we smoothly modify in four steps the function $u_1=r^{-n}\exp[-in\theta]$ into a function $u$ in $A(0,6)$ that satisfies (a), (b), (c), (d). 

\noindent \textit{Step 1.} The annulus $A(0,2)$. In a first step, we consider the function 
$$
u_1=r^{-n}\exp[-in\theta], 
$$ 
which is rearranged into a function of the form 
$$
u_2=-br^{-n+2k}\exp[iF(\theta)],
$$ 
where $b=(\rho+\rho^{(1-\delta)/2})^{-2k}$ and $F$ will be defined in a moment. For $m=0,1,\ldots,2n+2k-1$ we set $\theta_m=mT$, where 
$
T:=\pi/(n+k)$.
Let $f$ be a smooth $T$-periodic function on $\R$ such that $\int_0^Tf(\theta)\,d\theta=0$, $f(\theta)=-4k$ on $[0,T/5]\cup [4T/5,T]$, and $-4k\le f(\theta)\le 5k$ and $|f'(\theta)|\le Ck/T$, for $0\le \theta\le T$. Set 
$$
\Phi(\theta)=\int_0^{\theta}f(t)\,dt.
$$
Observe that, for $\theta\in \R$, we have
\begin{equation}
\label{eq:Phim}
\Phi(\theta)=-4k(\theta-\theta_m)=:-4k\theta+b_m \quad \text{ for } \quad|\theta-\theta_m|\le T/5.
\end{equation}
We define 
$
F(\theta)=(n+2k)\theta+ \Phi(\theta)$.
Note that $|u_1(r,\theta)|=|u_2(r,\theta)|$ for $r=\rho+\rho^{(1-\delta)/2}$. Moreover, it follows from \eqref{eq:Phim} that $u_2=-br^{-(n-2k)}\exp[i(n-2k)\theta+ib_m]$ on the sectors
$$
S_m:=\{(r,\theta):|\theta-\theta_m|\le T/5\}, \quad m=0,1,\ldots, 2n+2k-1.
$$

We now choose $C^{\infty}$-functions $\Psi_i(r)$, $i=1,2$, taking values between $0$ and $1$ such that $\Psi_1$ vanishes for $r\ge \rho+1.9\rho^{(1-\delta)/2}$ and equals $1$ for $r\le \rho +(5/3)\rho^{(1-\delta)/2}$, and $\Psi_2$ vanishes for $r\le \rho+0.1\rho^{(1-\delta)/2}$ and equals $1$ for $r\ge \rho +(1/3)\rho^{(1-\rho)/2}$, and moreover
\begin{equation}
\label{eq:Psip}
|\Psi_i^{(p)}(r)|\le C\rho^{-p(1-\delta)/2}, \quad r\ge 0, \quad i=1,2; \,\, p=1,2.
\end{equation}
Define $u=\Psi_1u_1+\Psi_2u_2$. It can be checked that $u$ is harmonic in $S:=A(1/3,5/3)\cap (\cup S_m)$. Set
$$
q(r,\theta)=\begin{cases}
    0\quad &\text{ if } (r,\theta)\in S,\\
    \Delta u/u \quad & \text{ otherwise}.
\end{cases}
$$
Then (a) holds on $A(0,2)$. The proof that $|u|>0$ on $A(0,2)\setminus S$ and that (b) holds on $A(0,2)$ is more involved, and here is where the fact that $q$ is complex-valued is relevant. In particular, the critical point is to show that $|u|>0$ and then (b) holds on 
$$
P_m=\big\{(r,\theta): \theta_m+\frac{T}{5}\le \theta\le \theta_m+\frac{4T}{5}\big\}\cap A(1/3,5/3), \quad m=0,\ldots, 2n+2k-1.
$$
For that purpose, we set 
$$
G(\theta)=F(\theta)+n\theta.
$$
On the annular sectors $P_m$ we have
\begin{equation}
\label{eq:u1masu2}
|u|=|u_1+u_2|=|u_2|\Big|\exp[iG(\theta)]-\frac{1}{br^{2k}}\Big|
\end{equation}
and it will be shown later that for some $\eta>0$
\begin{equation}
\label{eq:G}
\big|\exp[iG(\theta)]-\frac{1}{br^{2k}}\big|\ge \eta, \quad (r,\theta)\in P_m, \quad m=0,\ldots,2n+2k-1.
\end{equation}
On the other hand, by the assumptions on $k,n,\varepsilon$, and $\delta$, we see that
\begin{equation}
\label{eq:deltau2}
|\Delta u_2|\le Cr^{-\varepsilon}|u_2|\quad \text{ on } A(0,2).
\end{equation}
With \eqref{eq:G} and \eqref{eq:deltau2} and the fact that $\Delta u=\Delta u_2$ on $P_m$ we obtain $|u|>\eta|u_2|$ and so (b) holds on $P_m$. 

Let us show \eqref{eq:G}. Observe that $G(\theta)=2(n+k)\theta+\Phi(\theta)$ and $G'(\theta)=2(n+k)+f(\theta)$. By the assumptions of $f,k$, and $n$, we may assume that $G'(\theta)>n>0$. Since $G(\theta_m)=2\pi m$ and $G(\theta_{m+1})=2\pi (m+1)$ we conclude that 
$$
2\pi m+ \frac{nT}{5}\le G(\theta)\le 2\pi (m+1)-\frac{nT}{5}\quad \text{ for } \quad \theta_m+\frac{T}{5}\le \theta \le \theta_m+\frac{4T}{5}.
$$
Using the definition of $T$ and the assumptions on $k$ and $n$ one finds that
$$
2\pi m+ \frac{\pi}{7}\le G(\theta)\le 2\pi (m+1)-\frac{\pi}{7}\quad \text{ for } \quad \theta_m+\frac{T}{5}\le \theta \le \theta_m+\frac{4T}{5}.
$$
From this we conclude that \eqref{eq:G} holds with $\eta=\sin(\pi/7)$. We emphasize that it is crucial  that the bound in  \eqref{eq:G} is obtained taking the imaginary part of $\exp[iG(\theta)]$ into account and this is possible by the complex nature of the constructed function $u$ (and the corresponding complex-valued potential). 

\noindent \textit{Step 2.} On $A(2,3)$, we deform $u_2=-br^{-n+2k}\exp[iF(\theta)]$ into $u_3=-br^{-n+2k}\exp[i(n+2k)\theta]$. Let $\Psi(r)$ be a $C^{\infty}$-function which takes values between $0$ and $1$, equals $1$ for $r\le \rho+(7/3)\rho^{(1-\delta)/2}$, vanishes for $r\ge \rho+(8/3)\rho^{(1-\delta)/2}$, and satisfies \eqref{eq:Psip}. On $A(2,3)$ we set $u=-br^{-n+2k}\exp[i\big(\Psi(r)\Phi(\theta)+(n+2k)\theta\big)]$ and $q=\Delta u/u$. It can be shown that (b) holds on $A(2,3)$.

\noindent \textit{Step 3.} On $A(3,4)$, we deform $u_3$ into $u_4=-bdr^{-(n+2k)}\exp[i(n+2k)\theta]$, where $b$ is as in Step 1 and $d:=(\rho +3\rho^{(1-\delta)/2})^{4k}$. Let $\Psi$ be a $C^{\infty}$-function that takes values between $0$ and $1$, equals $1$ for $r\le \rho+(10/3)\rho^{(1-\delta)/2}$, vanishes for $r\ge \rho+(11/3)\rho^{(1-\delta)/2}$, and satisfies \eqref{eq:Psip}. Next, we define $h(r)=\Psi(r)+\big(1-\Psi(r)\big)dr^{-4k}$. It can be verified that $h$ satisfies \eqref{eq:Psip}. Now we set $u=u_3h$ and $q=\Delta u/u$ and it can be verified that (b) holds on $A(3,4)$. Furthermore, on $A(11/3,4)$ we have $u=-bdr^{-(n+2k)}\exp[i(n+2k)\theta]$.

\noindent \textit{Step 4.} Finally, on $A(4,6)$ one deforms $u_4$ into $u_5=ar^{-n-k}\exp[i(-n-k)\theta]$, where $a=bd(\rho+5\rho^{(1-\delta)/2})^{-k}$ and $b$ and $d$ are as in Step 3. Here, $a$ is chosen so that $|u_4(r,\theta)|=|u_5(r,\theta)|$ for $r=\rho+5\rho^{(1-\delta)/2}$. Let $\psi_i(r)$, $i=1,2$, be $C^{\infty}$-functions taking values between $0$ and $1$ satisfying \eqref{eq:Psip} such that $\psi_1$ vanishes for $r\ge \rho+5.9\rho^{(1-\delta)/2}$ and equals $1$ for $r\le \rho+(17/3)\rho^{(1-\delta)/2}$, and $\psi_2$ vanishes for $r\le \rho+4.1\rho^{(1-\delta)/2}$ and equals $1$ for $r\le \rho+(13/3)\rho^{(1-\delta)/2}$. Now, on $A(4,6)$ we set $u=\psi_1u_4+\psi_2u_5$. It turns out that $u$ is harmonic on $A(13/3,17/3)$. Therefore, we let $q=0$ on this annulus. It is also verified as in Step 1 that $q=\Delta u/u$ satisfies \eqref{eq:decay_potential} on the remaining points of $A(4,6)$.

Now let us continue to finish the proof of Lemma \ref{lem:cruz}. Set $m(r)=\max\{|u(r,\theta)|, 0\le \theta\le 2\rho\}$ and
$$
M(r)=\begin{cases}
    r^{-n}\quad &\rho\le r\le \rho+\rho^{(1-\delta)/2},\\
    br^{-n+2k}\quad &\rho+\rho^{(1-\delta)/2}\le r\le \rho+3\rho^{(1-\delta)/2},\\
     br^{-n+2k}h(r)\quad &\rho+3\rho^{(1-\delta)/2}\le r\le \rho+4\rho^{(1-\delta)/2},\\
      bdr^{-n-2k}\quad &\rho+4\rho^{(1-\delta)/2}\le r\le \rho+5\rho^{(1-\delta)/2},\\
       ar^{-n-k}\quad &\rho+5\rho^{(1-\delta)/2}\le r\le \rho+6\rho^{(1-\delta)/2},
\end{cases}
$$
where $a,b$, and $d$ are as in Step 4. Note that $M(r)$ is equal to the modulus of the functions $u_1,\ldots, u_5$ from which the solution $u(r,\theta)$ is constructed. It can be seen that $M(r)$ is a continuous piecewise smooth function on $[\rho, \rho+6\rho^{(1-\delta)/2}]$ that satisfies $m(r)\le 2M(r)$, $m(\rho)=M(\rho)$, and 
$$
\frac{d}{dr}\log M(r)=\frac{-n+O(k)}{r}\le -\frac{\rho^{1+\delta}}{2r}\le -\frac{1}{6}(\rho+6\rho^{(1-\delta)/2})^{\delta}\le -\frac{1}{6}r^{\delta}.
$$
Therefore, if $\rho\le r\le \rho+6\rho^{(1-\delta)/2}$, then
$$
\log m(r)-\log m(\rho)\le \log 2+\int_{\rho}^r\frac{d}{dr}\log M(t)\,dt\le \log 2-\frac{1}{6}\int_{\rho}^r t^\delta\,dt,
$$
concluding the proof of Lemma \ref{lem:cruz}.

\begin{rmk}
The choice of the exponent $4/3=1+1/3$ in the proof of Theorem \ref{thm:Me91} looks rather arbitrary. It is worth carefully studying \cite[Theorem 1]{Cruz99} and \cite[Lemma 3.1]{Cruz99}, corresponding to Theorem \ref{thm:Cruz99} and Lemma \ref{lem:cruz} above, where Cruz-Sampedro introduces the parameter $\delta$ and the relevant exponents are $1+\delta$. The assumption for 
$\delta$ and $\varepsilon$ in \cite[Lemma 3.1]{Cruz99} is somewhat hidden within the proof, and it leads to the choice $\delta=1/3$ when $\varepsilon=0$. We did not include a full track of this assumption in the current manuscript, but we point out that essentially an equivalent formulation is used by Davey in \cite[Lemma 6.1]{Davey14} or \cite[Lemma 2.1]{Davey15}: for instance, the condition $2\varepsilon+3\delta=1$ in \cite[Lemma 3.1]{Cruz99} is essentially the condition $\beta_0=\frac{4-2N}{3}$ in \cite[Lemma 6.1]{Davey14}. Moreover, it is easier to track how this assumption arises in \cite[Lemma 6.1]{Davey14} or \cite[Lemma 2.1]{Davey15}. On the other hand, one can see that $(\beta_0-1)$ and $N$ in \cite{Davey14,Davey15} play the role of $\delta$ and $\varepsilon$, respectively, in \cite{Cruz99}. The threshold $N=0$ leads to $\beta_0-1=\delta=1/3$.
\end{rmk}

Hence, Meshkov gave a partial answer to the question of the possible rate of decay at infinity of solutions of equations of the form \eqref{eq:H} in the case where only the condition of boundedness is imposed on the potential $q(x)$. 
Nevertheless, if $q(x)$ additionally satisfies the smoothness condition $|\nabla q(x)|=O(|x|^{-1})$ as $|x|\to \infty$, then in this case equation \eqref{eq:H} cannot have a solution decaying superexponentially at infinity. This was proven in a somewhat more general situation in  \cite{Me89}.
In  \cite{Me91}, Meshkov also considered parabolic and hyperbolic equations
on a cylinder $M\times \R_+$, where $M$ is a closed compact $C^{\infty}$-manifold of dimension $n$ with a $C^{\infty}$-density $d\mu$, $P$ a second-order elliptic operator with $C^{\infty}$ coefficients that is self-adjoint and upper semi-bounded in $L^2(M,d\mu)$, and $q(x,t)$ a bounded measurable coefficient.

\section{A quantitative form of Meshkov's result by Bourgain and Kenig}
\label{BK}

In the seminal work \cite{BK05}, Jean Bourgain and Carlos E. Kenig  studied the problem of Anderson localization for the continuous Bernoulli model, which is a well-known problem in the theory of disordered media\footnote{The summary in this section closely follows \cite{K05,K07}.}. The problem of localization originates in a 1958 paper by Phil Anderson \cite{A58}, who studied random Schr\"odinger operators on $\ell^2(\Z^n)$ given by 
\begin{equation}
\label{eq:Hd}
H_{\operatorname{d}}=-\Delta_{\operatorname{d}}+\lambda q,
\end{equation}
where $\Delta_{\operatorname{d}}u(x)=\sum_{|y-x|=1}\big(u(y)-u(x)\big)$ is the discrete Laplacian, $q:\Z^n\to \R$ a random field with i.i.d. components, and a real parameter $\lambda$ representing the noise or disorder strength\footnote{Anderson's original work deals with the Schr\"odinger equation in the $3$-dimensional lattice $i\hslash\frac{du}{dt}=Hu$, where $Hu_j=E_ju_j+\sum_{k\neq j}q_{jk}u_k$ and $j,k$ are lattice locations. Here $E_j$ is the energy of a spin occupying a site $j$ and $q_{jk}$ represents the transfer of a spin from one site to another, and may be a stochastic variable with a probability distribution.}. Anderson's work suggested,  using physical arguments, that for large $\lambda$, with probability $1$, a typical realization of the random operator $H_{\operatorname{d}}$ exhibits exponentially decaying eigenfunctions which form a basis of $\ell^2(\Z^n)$. This is known as \textit{Anderson localization}.

Bourgain and Kenig considered a random Schr\"odinger operator on $\R^n$ of the form
$$
H_{\varepsilon}=-\Delta + q_{\varepsilon},
$$
where $q_{\varepsilon}(x)=\sum_{j\in \Z^n}\varepsilon_j \varphi(x-j)$.
Here, $\varepsilon_j\in \{0,1\}$ are independent and $\varphi$ is a smooth, compactly supported function satisfying $0\le \varphi \le 1$. This is commonly referred to as the (continuous) Anderson--Bernoulli model. 
In this context, Anderson localization means that, near the bottom of the spectrum (i.e., for energies $0<E<\delta$, with $\delta=\delta(n)$ small), $H_{\varepsilon}$ has pure point spectrum with exponentially decaying eigenfunctions, almost surely (a.s.). This phenomenon was well-understood in the case when the random potential $q_{\varepsilon}$ has a continuous site distribution (i.e., the $\varepsilon_j$ take their values in $[0,1]$). When the random variables $\varepsilon_j$ are discrete valued (that is, the Anderson--Bernoulli model), the result was established for $n=1$ by Carmona--Klein--Martinelli \cite{CKM87} and by Shubin--Vakilian--Wolff \cite{SVW88}, using different methods (the Furstenberg--Lepage approach and the supersymmetric formalism, respectively). Neither of these methods was extended to $n>1$, until the work of Bourgain and Kenig.

The main result in \cite{BK05} is as follows. 
\begin{thm}[{\cite{BK05}}]
\label{thm:BK}
    For energies near the bottom of the spectrum, $0<E<\delta$, $H_{\varepsilon}$ displays Anderson localization a.s. in $\varepsilon$ for $n\ge1$. 
\end{thm}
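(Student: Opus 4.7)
The plan is to follow the multi-scale analysis (MSA) framework of Fröhlich--Spencer, adapted to the continuous setting (as in Combes--Hislop, Klopp, Germinet--Klein), but with the key modification introduced by Bourgain--Kenig to handle the Bernoulli site distribution. Concretely, I would aim to establish, with high probability in $\varepsilon$, that the finite-volume Green's function $G_\Lambda(E;x,y) = (H_\varepsilon|_\Lambda - E)^{-1}(x,y)$ decays exponentially in $|x-y|$ for boxes $\Lambda_L$ of a growing sequence of scales $L_{k+1} = L_k^\alpha$, uniformly in $E \in (0,\delta)$; from such decay at all scales, Anderson localization (pure point spectrum with exponentially decaying eigenfunctions) follows by standard arguments (e.g.\ the Simon--Wolff or Martinelli--Scoppola criteria, or Germinet--Klein bootstrap).

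First I would set up the initial scale estimate: for $L_0$ large, near the bottom of the spectrum, a Combes--Thomas bound combined with a Lifschitz-tails argument shows that the probability that $\Lambda_{L_0}$ is \emph{good} (spectrum far from $E$ and Green's function exponentially small) is at least $1 - L_0^{-p}$ for some large $p$. Then I would run the MSA induction: assuming goodness at scale $L_k$ with probability $1 - L_k^{-p}$, a geometric resolvent identity lets us propagate exponential decay from good sub-boxes to the box $\Lambda_{L_{k+1}}$, provided we can bound the probability of \emph{resonant} boxes (those whose spectrum is close to $E$). This latter bound is the Wegner-type estimate.

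The main obstacle, and the heart of the Bourgain--Kenig contribution, is precisely the Wegner estimate in the Bernoulli case. Classical Wegner arguments rely on spectral averaging with respect to a continuous marginal of the site distribution, which is completely unavailable when $\varepsilon_j \in \{0,1\}$. The strategy is to designate, at scale $L_{k+1}$, a subset of \emph{free sites} whose configuration is kept unconditioned, and to show that varying these Bernoulli variables moves the eigenvalues of $H_\varepsilon|_{\Lambda_{L_{k+1}}}$ enough to make a close encounter with $E$ unlikely. By first-order perturbation theory, the shift of an eigenvalue when flipping the Bernoulli at site $j$ is comparable to $|\psi(j)|^2$, where $\psi$ is the corresponding normalized eigenfunction. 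Thus one needs a lower bound on $\sum_{j \in \text{free}} |\psi(j)|^2$: the eigenfunction cannot be simultaneously small at a macroscopic set of free sites. This is exactly where the quantitative form of Meshkov's theorem enters, providing the estimate $|\psi(x_0)| \geq \exp(-C R^{4/3}(\log R)^{1/3})$ at a reference point given only boundedness of $\psi$ and $q$, which forces the $\ell^2$ mass of $\psi$ on the free-site lattice to be quantitatively large. Combined with a Sperner/Erd\H{o}s--Ko--Rado type combinatorial argument on the Bernoulli cube, this yields the desired Wegner estimate with polynomial loss in $L_{k+1}$.

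With the Wegner estimate in hand, the MSA induction closes and produces exponential decay of $G_{\Lambda_{L_k}}(E;x,y)$ at all scales and energies $E \in (0,\delta)$, almost surely. Standard arguments then convert this into Anderson localization on $(0,\delta)$: polynomially bounded generalized eigenfunctions of $H_\varepsilon$ must decay exponentially, so by the Schnol--Simon argument the spectrum in $(0,\delta)$ is pure point with exponentially localized eigenfunctions. The hard part is unquestionably the Wegner estimate and the associated free-sites analysis; the initial scale and the deduction of localization from MSA output are relatively standard, while the quantitative unique continuation is precisely the ingredient whose sharp form drives the whole argument.
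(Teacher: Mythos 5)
Your proposal follows essentially the same route as the one sketched in the survey (and in Bourgain--Kenig's original paper): multi-scale analysis, an initial-scale estimate from Lifschitz tails/Combes--Thomas, and a weak Wegner estimate obtained by first-order eigenvalue variation with respect to a reserved set of Bernoulli variables, where the lower bound on the eigenvalue influence is supplied by the quantitative unique continuation estimate of Theorem~\ref{thm:BKquantitative} and the anti-concentration step is a Sperner/Littlewood--Offord type lemma on the Boolean cube (Lemma~\ref{lem:BK}). One small slip: the quantitative unique continuation bound is $M(R)\ge C\exp[-CR^{4/3}\log R]$, not $\exp[-CR^{4/3}(\log R)^{1/3}]$; this does not affect the argument since what matters for closing the induction is that the exponent $4/3$ lies below the threshold $\beta<(1+\sqrt3)/2$.
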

The only previous result when $n>1$ was due to Bourgain \cite{B04}, who considered $q_{\varepsilon}$ with $\varphi(x)\sim \exp[-|x|]$, instead of $\varphi\in C_0^{\infty}$. The non-vanishing of $\varphi$ as $|x|\to\infty$ was essential in Bourgain's argument. 
In \cite{BK05}, on the true Bernoulli model, Bourgain and Kenig overcome this problem by the use of a quantitative uniqueness result. The general strategy in the proof of Theorem \ref{thm:BK} is based on a method called ``multi-scale analysis'', or a procedure by an ``induction on scales'' argument,  developed by Fr\"ohlich--Spencer \cite{FS83} and Fr\"ohlich--Martinelli--Scoppola--Spencer \cite{FMSS85}. We refer to \cite{Sc22} for a nice exposition of multiscale techniques in the theory of Anderson localization.
Bourgain and Kenig consider the restriction of the operator to a cube of size $\ell$ (under Dirichlet boundary conditions) and establish their estimates by induction in $\ell$. Such estimates are weak versions of the so-called ``Wegner estimates'' \cite{W81}, which roughly show that, for a large set of $\varepsilon$, at scale $\ell$, one has ``good resolvent estimates'' favorably depending on $\ell$. In proving such an estimate in the Bernoulli case, one of the key tools is a probabilistic lemma on Boolean functions, already used by Bourgain \cite{B04}.

\begin{lem}[{\cite[Lemma 3.1]{BK05}}]
\label{lem:BK}
Let $f=f(\varepsilon_1,\dots,\varepsilon_n)$ be a bounded function on $\{0,1\}^n$ and denote $I_j=f|_{\varepsilon_j=1}-f|_{\varepsilon_j=0}$, the $j^{\text{th}}$ influence, which is a function of $\varepsilon_{j'}$, $j'\neq j$. Let $J\subset \{1,\ldots,n\}$ be a subset with $|J|\le \delta^{-1/4}$, so that $0<k<|I_j|<\delta<1$ for all $j\in J$. Then, for all $E$, 
$$
\operatorname{meas} \{|f-E|<k/4\}\le |J|^{-1/2},
$$
where $\operatorname{meas}$ refers to the normalised counting measure on $\{0,1\}^n$.
\end{lem}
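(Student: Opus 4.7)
The plan is to condition on all coordinates outside $J$ and establish the bound fiberwise. Since the normalised counting measure on $\{0,1\}^n$ is a product, it suffices to prove that for each fixed $\varepsilon_{J^c}\in\{0,1\}^{J^c}$,
$$
\#\bigl\{\varepsilon_J\in\{0,1\}^J:|f(\varepsilon_J,\varepsilon_{J^c})-E|<k/4\bigr\}\le |J|^{-1/2}\,2^{|J|}.
$$
On such a fiber, $f$ is a bounded function on $\{0,1\}^J$ whose discrete ``edge gap'' $I_j$ in every direction $j\in J$ has absolute value strictly between $k$ and $\delta$, and the total oscillation of $f$ across the fiber is at most $|J|\delta\le\delta^{3/4}$.

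The first step is the independent-set observation. If $\varepsilon\in A:=\{|f-E|<k/4\}$ and $\varepsilon^{(j)}$ denotes the neighbour obtained by flipping coordinate $j\in J$, then
$$
|f(\varepsilon^{(j)})-E|\ge |f(\varepsilon^{(j)})-f(\varepsilon)|-|f(\varepsilon)-E|>k-k/4=3k/4,
$$
so $\varepsilon^{(j)}\notin A$. Hence $A$ is an independent set in the hypercube graph $Q_{|J|}$, which immediately gives $|A|\le 2^{|J|-1}$; this trivial bound $1/2$ is far weaker than the target $|J|^{-1/2}$ and must be sharpened.

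The sharpening is an Erd\H{o}s--Littlewood--Offord / Sperner-type anti-concentration argument that uses both the lower bound $|I_j|>k$ and the upper bound $|I_j|<\delta$. Informally, I would orient each edge of $Q_{|J|}$ in the direction of increase of $f$; since $|I_j|>k>0$ and $f$ is bounded, the telescoping identity around cycles forces this orientation to be acyclic, and passing to its transitive closure yields a partial order on $\{0,1\}^J$ with respect to which $A$ is an antichain. If the signs of the $I_j$ were constant, this partial order would be the coordinatewise order on $Q_{|J|}$ and Sperner's theorem would give $|A|\le\binom{|J|}{\lfloor|J|/2\rfloor}\lesssim 2^{|J|}/\sqrt{|J|}$, which is exactly the required bound. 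The quantitative assumption $|J|\le\delta^{-1/4}$ together with $|I_j|<\delta$ then serves to control the deviation from this ideal situation.

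The hard part is precisely this last step: the $f$-induced orientation need not coincide with the canonical coordinatewise orientation of the hypercube, because the sign of $I_j(\varepsilon_{-j})$ can vary with $\varepsilon_{-j}$, so Sperner's theorem cannot be invoked directly. I expect the main obstacle to be the reduction to a genuine Sperner situation, handled (as in \cite{B04}) by conditioning on the coordinates responsible for sign changes of the $I_j$ and applying the classical $O(1/\sqrt{m})$ Erd\H{o}s--Littlewood--Offord anti-concentration inequality to the remaining Bernoulli sum; the bounds $|I_j|<\delta$ and $|J|\le\delta^{-1/4}$ guarantee that the total variation from these sign fluctuations is of lower order than $|J|^{-1/2}$ and therefore does not destroy the Sperner bound.
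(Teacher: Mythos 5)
Note first that this survey does not prove the lemma---it cites \cite{BK05} and moves on---so what you have produced is a from-scratch attempt, not something to be compared with an in-text argument. Your skeleton is correct and matches the standard route: condition on $\varepsilon_{J^c}$, observe that $A=\{|f-E|<k/4\}$ is an independent set in the $|J|$-dimensional hypercube because each edge flip moves $f$ by more than $k$, and then try to upgrade the trivial $2^{|J|-1}$ bound to the Sperner bound $\binom{m}{\lfloor m/2\rfloor}\le m^{-1/2}2^m$, $m=|J|$. You also locate exactly the right obstruction: the sign of $I_j(\varepsilon_{-j})$ may depend on $\varepsilon_{-j}$, so the orientation of the hypercube induced by $f$ need not come from a coordinatewise partial order, and Sperner's theorem cannot be applied directly.

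The last paragraph of your proposal, however, is a hope rather than an argument, and the hope is misplaced: the hypotheses $|I_j|<\delta$ and $|J|\le\delta^{-1/4}$ do \emph{not} make the sign fluctuations a lower-order perturbation of a Sperner situation. To see this concretely, take $f(\varepsilon)=\tfrac{\delta}{2}\cdot\operatorname{parity}(\varepsilon_J)$, independent of $\varepsilon_{J^c}$. Then every pointwise influence has $|I_j|\equiv\delta/2$, so $0<k<|I_j|<\delta<1$ holds with, say, $k=\delta/4$; choosing $\delta=|J|^{-4}$ satisfies $|J|\le\delta^{-1/4}$; yet $\{|f-0|<k/4\}$ is exactly the even-parity half of the fiber, of measure $1/2$, which exceeds $|J|^{-1/2}$ as soon as $|J|\ge5$. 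In this example the sign of $I_j$ equals the parity of the other $|J|-1$ coordinates, so there is no small set of coordinates whose conditioning fixes all the signs, and no Erd\H{o}s--Littlewood--Offord bound is forthcoming. This tells you that either a consistency-of-sign hypothesis is implicitly present (in the application of \cite{BK05} the function $f$ is an eigenvalue $E(\varepsilon)$, monotone in each $\varepsilon_j$, so $I_j>0$ pointwise), or the exact constants and conditions differ from the transcription in this survey. Under the sign hypothesis $I_j>0$ on $J$, your argument collapses to a one-line application of Sperner---$f$ increases by more than $k$ along every edge of a coordinatewise chain, so $A$ is an antichain and $|A|/2^m\le\binom{m}{\lfloor m/2\rfloor}/2^m\le m^{-1/2}$---and you should notice that this correct version makes no use of $\delta$ nor of $|J|\le\delta^{-1/4}$, which is another hint that those hypotheses are doing their work elsewhere in the Bourgain--Kenig scheme rather than in this lemma.
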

The function to which this lemma is applied to is the eigenvalue $E$, i.e., $H_{\varepsilon}\xi=E\xi$. It then becomes critical to find bounds for the $j^{\text{th}}$ influence of eigenvalues. One can see that if $\xi$ is a normalised eigenstate ($\|\xi\|_{L^2}=1$) with eigenvalue $E$, by first order eigenvalue variation, one has that $I_j=\int|\xi(x)|^2\varphi(x-j)\,dx$. Upper bounds for this are more or less standard and what is at issue are  lower bounds for $I_j$. This can be reformulated to the following quantitative unique continuation problem at infinity:

Suppose that $u$ is a solution to 
$$
\Delta u+qu=0\quad \text{ in } \R^n, \quad \text{ with } \|q\|_{\infty}\le 1,
$$
so that $\|u\|_{\infty}\le C_0$ and $u(0)=1$. Carleman's unique continuation principle \cite{H83} dictates that $u$ cannot vanish identically on any open set, that is, for each $x_0\in \R^n$, $\sup_{x\in B(x_0,1)}|u(x)|>0$. Bourgain and Kenig derived a quantitative version of this property. More precisely, for $R$ large, define
$$
M(R)=\inf_{|x_0|=R}\sup_{B(x_0,1)}|u(x)|.
$$
The question that  needs to be addressed is: 

\begin{center}
\textit{How small can $M(R)$ be?}
\end{center}
The answer by Bourgain and Kenig is contained in the following theorem.
\begin{theorem}[{\cite[Lemma 3.10]{BK05}}]
\label{thm:BKquantitative}
    Under the above conditions on $u$, we have
   \begin{equation}      
\label{eq:Carleman}
    M(R)\ge C\exp[-CR^{4/3}\log R]|u(0)|\quad \text{ for some } C>0 \text{ and } R\to \infty.
    \end{equation}
\end{theorem}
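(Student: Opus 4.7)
The plan is to derive Theorem \ref{thm:BKquantitative} from a Carleman-type estimate by a \emph{three-region propagation of smallness}, in the same spirit as the proof of Theorem \ref{thm:Me91}. Arguing by contradiction, I would assume that there exists $x_0$ with $|x_0| = R$ and $\sup_{B(x_0, 1)} |u| \le m$, where $m$ is much smaller than $\exp(-C R^{4/3} \log R)$, and aim to contradict $u(0) = 1$ via a single Carleman inequality that couples the behaviour of $u$ near the origin, on a large intermediate annulus, and on the small ball around $x_0$.

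Concretely, I would apply a Carleman estimate of the Meshkov type, in the spirit of \eqref{eq:carlemanMeshkov}, but with a weight $e^{2\tau \phi(|x|)}$ in which the free parameter $\tau$ will be optimized at the end, to the function $v = \chi u$. Here $\chi$ is a smooth radial cutoff equal to $1$ on most of the annulus $\{1 \le |x| \le R+2\}$, and smoothly vanishing near $|x| = 1/2$, on a small ball around $x_0$, and outside $\{|x| \le 2R\}$. Since $\Delta u = qu$ with $|q|\le 1$, one has $\Delta v = qv + 2 \nabla\chi \cdot \nabla u + u\,\Delta\chi$; the first term is absorbed by the $\tau^3$ factor on the left-hand side of the Carleman inequality, provided $\tau$ is larger than a fixed constant. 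The remaining commutator terms are supported only on three thin transition regions: an inner shell near $|x| \sim 1/2$, a spherical shell of thickness $O(1)$ around $B(x_0, 1)$, and an outer shell near $|x|\sim 2R$. The outer shell can be suppressed using $\|u\|_\infty \le C_0$ by choosing the outer cutoff generously, and interior elliptic estimates for $\Delta u = qu$ upgrade the sup bound $m$ on $B(x_0, 1)$ to a $C^1$ bound on a slightly smaller ball, so that the contribution from the hole around $x_0$ is of order $m^2 \, e^{2\tau \phi(R)}$.

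Restricting the left-hand side to a neighbourhood of the origin (where $\chi \equiv 1$) and using $u(0) = 1$ together with interior regularity yields a lower bound of order $e^{2\tau \phi(c)}$, up to polynomial factors in $\tau$. Combining the two sides produces a bound of the form $m^2 \gtrsim \exp\bigl(-2\tau \, [\phi(R) - \phi(c)]\bigr)$, and the $4/3$ exponent is then obtained by optimizing $\tau$ together with the shape of $\phi$: the lower bound on $\tau$ coming from the absorption of the potential is essentially a fixed constant, but to compensate the inner transition error one is forced to take $\tau$ as a specific power of $R$, and the minimum of $\tau [\phi(R)-\phi(c)]$ over admissible weights of Meshkov type scales like $R^{4/3}$. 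The $\log R$ loss in the final exponent arises from the polynomial prefactors in the Carleman estimate, which cannot be absorbed without an overall logarithmic penalty once $\tau$, $R$, and the transition scales are simultaneously balanced.

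The main obstacle is precisely this balancing act: one must design the weight $\phi$, the cutoff $\chi$, and the parameter $\tau$ so that (i) the potential is absorbed for all admissible $\tau$, (ii) each boundary/commutator term carries the correct exponential factor and is strictly dominated by the origin contribution, and (iii) the final optimization yields the sharp exponent $R^{4/3}$ rather than something worse. The quality of Bourgain and Kenig's refinement lies exactly here, in recovering the \emph{same} exponent $4/3$ as in Meshkov's qualitative counterexample with only a $\log R$ loss, which is consistent with the sharpness of $4/3$ for the problem.
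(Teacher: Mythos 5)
Your proposal runs into a structural obstacle at the outer transition region. With a Meshkov-type weight $e^{2\tau\phi(|x|)}$ centered at the origin and \emph{increasing} in $|x|$, the commutator term from the outer cutoff near $|x|\sim 2R$ carries the factor $e^{2\tau\phi(2R)}$, which strictly dominates both the hole contribution $m^2 e^{2\tau\phi(R)}$ and the origin contribution $e^{2\tau\phi(c)}$. Since $u$ is merely bounded (no decay at infinity is assumed beyond $\|u\|_\infty\le C_0$), this outer term cannot be suppressed by ``choosing the outer cutoff generously'': widening the transition only reduces $|\nabla\chi|$ polynomially, while the exponential disparity between $\phi(2R)$ and $\phi(R)$ (or $\phi(c)$) is overwhelming. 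Once that term is inserted into the right-hand side, the inequality becomes vacuous and gives no lower bound on $m$. A weight peaked at large $|x|$ is simply the wrong geometry for this problem: the information has to propagate from the small ball $B(x_0,1)$ toward the origin, so the Carleman weight must be singular at $x_0$ and \emph{decrease} away from it.

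The paper avoids this by recentering and rescaling so that $x_0$ becomes the origin: setting $u_R(x)=u\big(AR(x+x_0/(AR))\big)$, the original origin sits at the fixed distance $1/A$, the ball $B(x_0,1)$ shrinks to radius $r_0\sim 1/(AR)$, and the cutoff lives in the fixed ball $|x|<4$. The Carleman weight $w^{-1-2a}$, with $w(r)\simeq r$, is singular at the \emph{new} origin and decreases outward, so the outer commutator, now at the bounded radius $|x|\sim 3$, carries the \emph{smallest} weight and is controlled by $\|u\|_\infty$ once $A$ is large. Moreover, the exponent $4/3$ does not arise from ``minimizing $\tau[\phi(R)-\phi(c)]$ with $\tau$ constant''; after rescaling the potential grows to $|q_R|\le A^2R^2$, and absorbing $\int w^{2-2a}|q_R|^2|u_R\rho|^2$ into the $a^3$ term forces $a\gtrsim R^{4/3}$. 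The $\log R$ loss is then the factor $(CR)^{2a}$ with $a\sim R^{4/3}$, coming from the weight at scale $r_0\sim 1/R$, not from polynomial prefactors as you suggest. Both the recentering/rescaling step and this mechanism for the critical exponent are missing from your plan, and without them the proof does not close.
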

Quantitative estimates for solutions of $-\Delta u+W  \cdot \nabla u+qu=\lambda u$ in the spirit of Theorem \ref{thm:BKquantitative}  were studied in \cite{Davey14}, see also the generalizations by  C.-L. Lin and J.-N. Wang \cite{LW14}.
It is interesting to note that the corresponding Theorem \ref{thm:BKquantitative} is false on the lattice $\Z^n$, see, for instance, the article by Jitomirskaya \cite[Theorem 2]{J07}. As a remark, in order for the induction on scales argument to work in proving the weak Wegner estimate, an estimate of the form
$$
 M(R)\ge C\exp[-CR^{\beta}], \quad \text{ with } \beta<\frac{1+\sqrt3}{2}=1.35\ldots, 
$$
was needed. Observe that  $4/3=1.33\ldots$. It turns out that the result in Theorem \ref{thm:BKquantitative} is a quantitative version of Landis' conjecture. On the other hand, Meshkov's example in Theorem \ref{thm:Cruz99_2} clearly shows the sharpness of the lower bound on $M(R)$ in Theorem \ref{thm:BKquantitative}. Nevertheless, in Meshkov's example, $u$ and $q$ are complex valued, while for many applications, only real $u$, $q$ are of interest. Then, the following question arises:
\begin{question}{\cite[Question 1]{K05}, \cite[p. 28]{K07}}
\label{q:Kenig}
    Can $4/3$ in Theorem \ref{thm:BKquantitative} be improved to $1$ for real-valued $u$, $q$?
\end{question}

\subsection{Sketch of the proof of Theorem \ref{thm:BKquantitative}.}

\subsubsection{A Carleman-type inequality.}

We turn to a sketch of the proof of Theorem \ref{thm:BKquantitative}. The starting point is a Carleman-type inequality \cite{H83}. A proof, essentially  due to L. Escauriaza and S. Vessella \cite{EV02}, is given in \cite[Appendix 8]{BK05}. In the current manuscript we will provide still an alternative proof by the first author and Luis Vega.
\begin{lem}{\cite[Lemma 3.15]{BK05}}
\label{lem:aingeru}
There are positive dimensional constants $C_1,C_2,C_3$, and an increasing function $w(r)$ defined for $0<r<10$, so that 
\begin{equation}
\label{eq:equiC1}
\frac{1}{C_1}\le \frac{w(r)}{r}\le C_1
\end{equation}
and such that, for all $f\in C_0^{\infty}(B(0,10)\setminus\{0\})$, $a>C_2$ we have
\begin{equation}
\label{eq:carlemanBilbao}
a^3\int w^{-1-2a}|f|^2\le C_3\int w^{2-2a}|\Delta f|^2.
\end{equation}
\end{lem}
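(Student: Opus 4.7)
The plan is to reduce the Carleman estimate to a family of one-dimensional estimates by spherical harmonics, pass to logarithmic radial coordinates, and then prove each 1D estimate by Plancherel together with a positive commutator argument. A crucial ingredient is the choice of $w$ as $r$ multiplied by a mild pseudoconvex correction, in order to extract the cubic power of $a$.

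\textbf{Reduction to a 1D inequality.} Expand $f(r\theta)=\sum_{k,j}f_{k,j}(r)Y_{k,j}(\theta)$ in spherical harmonics with $-\Delta_{\mathbb{S}^{n-1}}Y_{k,j}=\lambda_k Y_{k,j}$, $\lambda_k = k(k+n-2)$. Since the weights depend only on $r$, orthogonality decouples \eqref{eq:carlemanBilbao} mode-by-mode. The Liouville substitution $F(r):=r^{(n-1)/2}f_{k,j}(r)$ removes the first-order radial derivative and absorbs the Jacobian, reducing the inequality to showing, uniformly in $k\ge 0$,
\begin{equation*}
a^3\int_0^{10} w^{-1-2a}|F|^2\,dr \;\le\; C_3 \int_0^{10} w^{2-2a}\Bigl|F''(r)-\frac{\mu_k^2}{r^2}F(r)\Bigr|^2 dr,
\end{equation*}
with $\mu_k^2=\lambda_k+(n-1)(n-3)/4$. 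Next set $r=e^t$ and $F(r)=e^{t/2}V(t)$, which turns the operator into $e^{-3t/2}(\partial_t^2-\nu_k^2)V$ with $\nu_k = k+(n-2)/2$. Writing $w(r)=r\,e^{\eta(\log r)}$ for an as-yet unspecified bounded function $\eta$, and conjugating $V=e^{at+a\eta(t)}V_0$, a direct computation (absorbing the bounded factors $e^{\pm\eta}$) shows the estimate is equivalent to
\begin{equation*}
a^3\int_\R e^t\,|V_0|^2\,dt \;\le\; C \int_\R \bigl| V_0''+2a(1+\eta')V_0'+\bigl(a^2(1+\eta')^2+a\eta''-\nu_k^2\bigr)V_0\bigr|^2\,dt,
\end{equation*}
for $V_0$ compactly supported in $\{t\le\log 10\}$. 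Since $e^t\le 10$ on that set, it suffices to prove the plain $L^2$ bound with $a^3\|V_0\|_{L^2}^2$ on the left.

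\textbf{Symbol analysis and pseudoconvex correction.} For the flat choice $\eta\equiv 0$ the conjugated operator has constant coefficients, and Plancherel reduces the target inequality to the pointwise symbol bound
\begin{equation*}
a^3 \;\lesssim\; \bigl((a-\nu_k)^2+\xi^2\bigr)\bigl((a+\nu_k)^2+\xi^2\bigr), \qquad \xi\in\R,\ \nu_k\ge 0,
\end{equation*}
which \emph{fails} on the resonance line $\xi=0$, $\nu_k=a$; hence $w(r)=r$ yields only $a^2$. To close the resonance, take $\eta$ bounded with $\eta''(t)\ge c>0$ on a neighbourhood of the relevant $t$-interval: the extra zero-order term $a\eta''$ in the conjugated operator then provides a positive commutator of size $a$ which, via a G\r{a}rding-type argument, converts to the missing factor of $a$ and supplies the full $a^3$. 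A standard choice of $\eta$ (in the spirit of Escauriaza--Vessella) is small and slowly varying with $e^\eta$ between two positive constants, so that $1/C_1\le w(r)/r\le C_1$ and the monotonicity of $w$ hold automatically.

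\textbf{Main obstacle.} The delicate point is designing $\eta$ so that the positive commutator gain is \emph{uniform} in the angular index $k$, and absorbing the lower-order variable-coefficient errors produced by $\eta'$ and $\eta''$ into the principal term. Once this is arranged, the remaining estimates are routine Plancherel and integration-by-parts manipulations.
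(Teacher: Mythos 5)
Your overall strategy — decompose by spherical harmonics, do the Liouville substitution $F = r^{(n-1)/2}f_{k,j}$, pass to the logarithmic variable $t = \log r$, and then analyze the conjugated constant-coefficient operator by Plancherel together with a pseudoconvex correction — is a legitimate and well-known alternative to the paper's method, which instead works directly in $\R^n$ with the decomposition $e^{-a\phi}\Delta e^{a\phi} = \mathcal{S}+\mathcal{A}$ and estimates the resulting quadratic form via Hardy's inequality and a clever choice of auxiliary radial multipliers $\psi_1,\psi_2$. Your reduction steps (the identity $\mu_k^2 = \lambda_k+(n-1)(n-3)/4$, the emergence of $\nu_k = k+(n-2)/2$ after the substitution $F = e^{t/2}V$, and the factorized symbol $((a-\nu_k)^2+\xi^2)((a+\nu_k)^2+\xi^2)$ whose zero at $\xi=0$, $\nu_k=a$ shows that $w=r$ only gives $a^2$) are correct, and this is a genuinely different decomposition that buys a very clean way of seeing why the exponent $4/3$ appears.

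However, there are two genuine problems, and unfortunately they sit exactly at the crux of the matter. First, the sign of the pseudoconvexity correction is wrong. With your conjugation $V=e^{a(t+\eta)}V_0$ and the decomposition of the conjugated operator into symmetric part $P_s=\partial_t^2+a^2(1+\eta')^2-\nu_k^2$ and antisymmetric part $P_a=2a(1+\eta')\partial_t+a\eta''$, the commutator term in
$\|PV_0\|^2=\|P_sV_0\|^2+\|P_aV_0\|^2+\langle[P_s,P_a]V_0,V_0\rangle$
evaluates to
$\langle[P_s,P_a]V_0,V_0\rangle = -4a\int\eta'' |V_0'|^2 + a\int\eta^{(4)}|V_0|^2 - 4a^3\int(1+\eta')^2\eta''|V_0|^2,$
which is \emph{positive} only if $\eta''<0$, i.e.\ a \emph{concave} correction in the log variable, not the convex one you propose; equivalently, the pseudoconvexity condition $\{\mathrm{Re}\,p_\psi,\mathrm{Im}\,p_\psi\}=-4\psi''(\xi^2+(\psi')^2)>0$ forces $\psi''<0$ with $\psi=a(t+\eta)$. (Note the paper's weight, coming from $\phi'(r)=e^{-r}/r$, does have $\eta''(t)=-\frac{a}{a-1}e^te^{-e^t}<0$.) Second, your proposed hypothesis ``$\eta$ bounded with $\eta''\ge c>0$ on a neighbourhood of the relevant $t$-interval'' is inconsistent, since the relevant interval is all of $(-\infty,\log 10)$ — a bounded $\eta$ cannot have second derivative bounded away from zero there. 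The correct phenomenon (again visible in the paper's weight) is that $|\eta''(t)|\sim e^t$ necessarily degenerates as $t\to-\infty$, and it is precisely the factor $e^t$ on the left-hand side of your reduced 1D inequality — which you discard when you pass to the ``plain $L^2$ bound'' — that compensates. Because you drop that factor, you cannot recover the estimate with a degenerating $\eta''$, and with a non-degenerating one the boundedness of $w/r$ fails. This, together with the absorption of the lower-order variable-coefficient errors uniformly in $k$, is the entire content of the lemma; you correctly flag it as the ``main obstacle'' but leave it undone, so the crucial step is missing.
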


\begin{proof}[Sketch of proof (due to A. Fern\'andez and L. Vega)]

We will construct a radial function (we will write $r=|x|$ in the sequel) $\phi(r)$ such that, for $a$ large enough
$$
a^3\int \frac{e^{-2a\phi}}{|x|^3}|f|^2+a\int \frac{e^{-2a\phi}}{|x|}|\nabla f|^2\le C_3\int e^{-2 a\phi}|\Delta f|^2,
$$
and the construction of $\phi$ will relate this inequality to the one in the statement involving the weight $w$. In order to do this, we write $u=e^{-a \phi}f$ and decompose $e^{-a\phi}\Delta e^{a\phi}=\mathcal{S}+\mathcal{A}$, being $\mathcal{S}$ symmetric and $\mathcal{A}$ antisymmetric, given by (see for instance \cite{EKPV-JEMS}),
\begin{align*}
    \mathcal{S}u&=\Delta u+a^2|\nabla\phi|^2u,\\
    \mathcal{A}u&=a\Delta\phi u+2a\nabla\phi\cdot\nabla u.
\end{align*}

Also, by direct computations
\begin{align*}
[\mathcal{S},\mathcal{A}]u=a[4\nabla\cdot(D^2\phi\nabla u)-4a^2D^2\phi\nabla\phi\cdot\nabla\phi u+\Delta^2\phi u].
\end{align*}

Taking into account that $\phi$ is a radial function and by using integration by parts we have, for general functions $\psi_1$ and $\psi_2$,
\begin{align*}
\langle \mathcal{S}u,\psi_1u\rangle&=\int \Delta u\psi_1 u+a^2\int(\phi')^2\psi_1|u|^2=-\int \psi_1|\nabla u|^2-\int u\nabla u\cdot \nabla \psi_1 +a^2\int(\phi')^2\psi_1|u|^2\\&=-\int \psi_1|\nabla u|^2+\frac12\int \Delta\psi_1|u|^2 +a^2\int(\phi')^2\psi_1|u|^2,\\
\langle \mathcal{A}u,\psi_2u\rangle&=a\int \Delta \phi\psi_2|u|^2+2a\int\nabla\phi\cdot\nabla u\psi_2u=-a\int \phi'\psi_2'|u|^2.
\end{align*}

Furthermore, it is not difficult to check that
\begin{align*}
D^2\phi\nabla\phi\cdot\phi=\phi''(\phi')^2,\ \ D^2\phi\nabla u\cdot\nabla u=\phi''|\partial_r u|^2+\frac{\phi'}{r}|\nabla_\tau f|^2,
\end{align*}
where $\partial_r u=\nabla u\cdot \frac{x}{r}$ and $|\nabla_\tau u|^2=|\nabla u|^2-|\partial_r u|^2$ represent the radial and tangential components of the gradient.

Hence, for general radial functions $\psi_1$ and $\psi_2$, the following holds
\begin{align*}
\| e^{-a\phi} \Delta f\|^2=&\|e^{-a\phi}\Delta e^{a\phi}u\|^2=\|\mathcal{S}u\|^2+\|\mathcal{A}u\|^2+\langle[\mathcal{S},\mathcal{A}]u,u\rangle\\
=&\left\|\Big(\mathcal{S}-\frac{\psi_1}{2}\Big)u\right\|^2+\left\|\Big(\mathcal{A}-\frac{\psi_2}{2}\Big)u\right\|^2-\frac{1}{4}\|\psi_1u\|^2-\frac{1}{4}\|\psi_2u\|^2+\langle\mathcal{S}u,\psi_1u\rangle+\langle\mathcal{A}u,\psi_2u\rangle\\
=&-4a^3\int (\phi')^2\Big(\phi''-\frac{\psi_1}{4a}\Big)|u|^2-4a\int \Big(\phi''+\frac{\psi_1}{4a}\Big)|\partial_r u|^2-4a\int \Big(\frac{\phi'}{r}+\frac{\psi_1}{4a}\Big)|\nabla_\tau u|^2\\&\quad+2a\int \frac{\Delta\psi_1}{4a}|u|^2-4a^2\int \frac{\psi_1^2}{16a^2}|u|^2+a\int\Delta^2\phi|u|^2-a\int\phi'\psi_2'|u|^2\\&\quad-\frac14\int\psi_2^2|u|^2+\left\|\Big(\mathcal{S}-\frac{\psi_1}{2}\Big)u\right\|^2+\left\|\Big(\mathcal{A}-\frac{\psi_2}{2}\Big)u\right\|^2.
\end{align*}

Next, we define $\phi$ according to $\phi'(r)=\frac{e^{-r}}{r}$ and take $\psi_1$ so that $\phi''-\frac{\psi_1}{4a}=-\frac{e^{-r}}{2r}$. Straightforward computations show the existence of dimensional constants $\beta_1,\dots,\beta_5$ such that
\begin{align*}
\| e^{-a\phi} \Delta f\|^2=&\left\|\Big(\mathcal{A}-\frac{\psi_2}{2}\Big)u\right\|^2+\left\|\Big(\mathcal{S}-\frac{\psi_1}{2}\Big)u\right\|^2+2a^3\int \frac{e^{-3r}}{r^3}|u|^2
+6a\int \frac{e^{-r}}{r}|\partial_r u|^2+2a\int \frac{e^{-r}}{r}|\nabla_\tau u|^2\\&+8a\int \frac{e^{-r}}{r^2}|\partial_r u|^2-2(n-4)^2a\int \frac{e^{-r}}{r^4}|u|^2+a\int\frac{e^{-r}}{r}\psi_2'|u|^2-\frac14\int\psi_2^2|u|^2-4a^2\int \frac{e^{-2r}}{r^4}|u|^2\\&+a\int \Big[\frac{\beta_1}{r^3}+\frac{\beta_2}{r^2}+\frac{\beta_3}{r}\Big]e^{-r}|u|^2-4a^2\int \Big[\frac{\beta_4}{r^3}+\frac{\beta_5}{r^2}\Big]e^{-2r}|u|^2.
\end{align*}

Before defining the function $\psi_2$, let us consider the symmetric operator $X=\frac{x}{|x|^4}$ and the antisymmetric operator $Y=\nabla,$ whose commutator is $\frac{n-4}{|x|^4}$. Hence,
\begin{align*}
    (n-4)\int \frac{|v|^2}{|x|^4}=\langle [X,Y]v,v\rangle=2\langle Xv,Yv\rangle=2\int \nabla v\cdot\frac{x v}{|x|^4}&=2\int \frac{\frac{x\cdot\nabla v}{|x|}}{|x|}\frac{v}{|x|^2}=2\int \frac{\partial_r v}{|x|}\frac{v}{|x|^2}\\&\le 2\left(\int\frac{|\partial_r v|^2}{|x|^2}\right)^{1/2}\left(\int\frac{|v|^2}{|x|^4}\right)^{1/2}
\end{align*} 
implies the Hardy-type inequality
$$
\frac{(n-4)^2}{4}\int \frac{|v|^2}{r^4}\le \int \frac{|\partial_r v|^2}{r^2}.
$$
If we let $v=e^{-r/2}u$, we observe that     
$$
8a\int \frac{e^{-r}}{r^2}|\partial_r u|^2-2(n-4)^2a\int \frac{e^{-r}}{r^4}|u|^2\ge -4(n-3)a\int \frac{e^{-r}}{r^3}|u|^2.
$$
Besides, taking $\psi_2(r)=-4a \frac{e^{-r}}{r^2}$, we see that
$$
a\int\frac{e^{-r}}{r}\psi_2'|u|^2-\frac14\int\psi_2^2|u|^2-4a^2\int \frac{e^{-2r}}{r^4}|u|^2=4a^2\int \frac{e^{-2r}}{r^3}|u|^2.
$$
Therefore, there exist dimensional constants (abusing notation, we keep the notation $\beta_i$) such that
\begin{align*}
\| e^{-a\phi} \Delta f\|^2\ge&2a^3\int \frac{e^{-3r}}{r^3}|u|^2
+6a\int \frac{e^{-r}}{r}|\partial_r u|^2+2a\int \frac{e^{-r}}{r}|\nabla_\tau u|^2\\&+a\int \Big[\frac{\beta_1}{r^3}+\frac{\beta_2}{r^2}+\frac{\beta_3}{r}\Big]e^{-r}|u|^2-4a^2\int \Big[\frac{\beta_4}{r^3}+\frac{\beta_5}{r^2}\Big]e^{-2r}|u|^2.
\end{align*}

To finish the proof, notice that, undoing the change $u=e^{-a\phi}f$, the same inequality holds true replacing $u, \partial_r u$, and $\nabla_\tau u$ by $e^{-a\phi}f,\ e^{-a\phi}\partial_r f$, and $e^{-a\phi}\nabla_\tau f$ respectively, possibly for different dimensional constants $\beta_i$. Further, the support condition on $f$, for which the term $e^{-r}$ is bounded from below and from above, and the fact that for $a$ large enough the integrals in the second line of the previous formula can be absorbed in a fraction of the first line, imply that
$$
\| e^{-a\phi} \Delta f\|^2\gtrsim a^3\int \frac{e^{-2a\phi}}{r^3}|f|^2
+a\int \frac{e^{-2a\phi}}{r}|\nabla f|^2.
$$

Looking at the definition of $\phi$ (i.e. $\phi'(r)=\frac{e^{-r}}{r}$), it is not difficult to construct the function $w(r)$ for which the inequality in \eqref{eq:carlemanBilbao} holds. Indeed, take $w(r) = e^{\frac{a}{a-1}\phi(r)}$. We have that $c_1/r \le \phi'(r) \le c_2/r$,  since $r \in(0,10)$. We also take $a>C_2$ for a large enough positive constant $C_2$. With this, $w(r)$ is increasing on $0<r<10$ and \eqref{eq:equiC1} holds.
Now, taking $w^{2-2a} = e^{-2a \phi}$, we have that $w^{2-2a}/r^3\simeq_{C_1} w^{-1-2a}$, and we are done.
\end{proof}

A classical application of this lemma (see \cite{H83}) is a unique continuation result, which in this case says that if a solution $u$ of \eqref{eq:equation} decays as $o(\exp[-c|x|^{4/3}\log |x|])$, then $u\equiv0$. More generally, this result is due to Carleman \cite{C39}.
 \begin{prop}{\cite{C39}}
 \label{prop:carleman}
     Assume that $\Delta u=qu$ in $B(0,10)$ and that $\|u\|_{L^{\infty}}\le C_0$, $\|q\|_{L^{\infty}}\le M$. Suppose that $|u(x)|\le C_N|x|^N$ for each $N>0$. Then $u\equiv 0$ in $B(0,10)$.

 \end{prop}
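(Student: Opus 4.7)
The plan is to apply the Carleman inequality \eqref{eq:carlemanBilbao} of Lemma \ref{lem:aingeru} to an appropriate cutoff of $u$, conclude that $u$ vanishes on a shell of positive thickness, and then propagate by weak unique continuation on the connected set $B(0,10)$. Fix a small parameter $r_0\in(0,1)$ and a radial cutoff $\eta\in C_c^\infty(\R^n\setminus\{0\})$ with $\eta\equiv 1$ on $\{r_0\le|x|\le 4\}$, $\eta\equiv 0$ on $\{|x|\le r_0/2\}\cup\{|x|\ge 5\}$, and standard derivative bounds on the two transition shells $A_{\text{in}}=\{r_0/2<|x|<r_0\}$ and $A_{\text{out}}=\{4<|x|<5\}$. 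Apply \eqref{eq:carlemanBilbao} to $f=\eta u$; since $\Delta(\eta u)=\eta qu+2\nabla\eta\cdot\nabla u+(\Delta\eta)u$ and $w(r)\le 10C_1$ on $\supp\eta$, the lower-order piece $\eta qu$ can be absorbed into the left-hand side as soon as $a\ge C(M)$, leaving a right-hand side supported on $A_{\text{in}}\cup A_{\text{out}}$.

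On $A_{\text{out}}$, bound $|u|$ by $C_0$ and use a Caccioppoli estimate (via $\Delta u=qu$ with $\|q\|_\infty\le M$) to control $\int_{A_{\text{out}}}|\nabla u|^2$; since $w\ge w(4)$ on that shell, the outer contribution is at most $Cw(4)^{2-2a}$. On $A_{\text{in}}$, use the vanishing hypothesis $|u(x)|\le C_N|x|^N$ for every $N>0$, together with Caccioppoli on a slightly dilated annulus, to bound the integrand by a constant times $C_N^2\, r_0^{2N-C}$ for some absolute $C$ (the negative $r_0$-powers coming from derivatives of $\eta$ and from Caccioppoli); since $w\le C_1 r_0$ on $A_{\text{in}}$, this contributes at most $C_N^2\, r_0^{2N-C}(C_1 r_0)^{2-2a}$. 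Restricting the left-hand side of \eqref{eq:carlemanBilbao} to the fixed shell $\{1\le|x|\le 3\}$ (on which $\eta\equiv 1$ whenever $r_0\le 1$) and using $w^{-1-2a}\ge w(3)^{-1-2a}$ there, we arrive at
\begin{equation*}
a^3\int_{\{1\le|x|\le 3\}}|u|^2 \;\le\; C\left(\frac{w(3)}{w(4)}\right)^{\!2a} + C_N^2\, r_0^{\,2N-C'}\left(\frac{w(3)}{C_1 r_0}\right)^{\!2a},
\end{equation*}
for some absolute constants $C, C'$.

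The first term tends to $0$ as $a\to\infty$ because $w$ is strictly increasing, so $w(3)/w(4)<1$. For the second, first fix $r_0\in(0,w(3)/C_1)$ so that the base $w(3)/(C_1 r_0)$ exceeds $1$, and then for each $a$ invoke the vanishing hypothesis at an order $N=N(a)$ chosen large enough (for instance $N$ linear in $a$ with sufficiently large slope) that the factor $r_0^{\,2N}$ beats $(w(3)/(C_1 r_0))^{2a}$; sending $a\to\infty$ then forces $u\equiv 0$ on the shell $\{1\le|x|\le 3\}$. Weak unique continuation for the elliptic operator $\Delta-q$, itself a consequence of \eqref{eq:carlemanBilbao} applied locally around any point, propagates this to $u\equiv 0$ on all of $B(0,10)$. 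The main obstacle is precisely this last balancing step: the constants $C_N$ in the infinite-order-vanishing hypothesis may depend on $N$ in an uncontrolled way, but for any fixed $r_0$ below the threshold $w(3)/C_1$ the geometric gain $r_0^{\,2N}$ dominates the growth $(w(3)/(C_1 r_0))^{2a}$ once $N$ is chosen large enough relative to $a$, which is exactly what the hypothesis allows.
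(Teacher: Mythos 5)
The paper states Proposition~\ref{prop:carleman} without giving a proof, only citing Carleman and H\"ormander; your overall scheme --- cut off $u$, absorb the $\eta q u$ term into the left side of \eqref{eq:carlemanBilbao} for $a\gtrsim M^{2/3}$, split the commutator contribution into an inner and an outer annulus, and push the Carleman parameter --- is indeed the classical strong-unique-continuation argument the paper is implicitly invoking, and most of your intermediate estimates are sound.

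However, the final limiting step has a genuine gap. You fix $r_0$ and then, for each $a$, choose $N=N(a)\to\infty$ so that $r_0^{2N}$ beats $(w(3)/(C_1 r_0))^{2a}$. This forgets that the constants $C_N$ in $|u(x)|\le C_N|x|^N$ come from the hypothesis and are completely uncontrolled in $N$: for a function behaving like $e^{-|x|^{-1/2}}$ near the origin (which vanishes to infinite order) the best admissible constants satisfy $\log C_N\sim 2N\log N$, so for every fixed $r_0>0$ the quantity $C_N^2\, r_0^{2N}$ already tends to infinity along $N\to\infty$, and a fortiori so does $C_N^2\, r_0^{2N}\,(w(3)/(C_1 r_0))^{2a}$ along any diagonal $N=N(a)$. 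You acknowledge this worry and then wave it away, but as written the step is fatal. The standard remedy is to reverse the order of limits: fix $a$ and one admissible $N$ with $N>a+C'$ (so the net exponent of $r_0$ in the inner contribution is strictly positive), send $r_0\to 0$ first --- which annihilates the inner term for that fixed pair $(a,N)$ regardless of the size of $C_N$ --- obtaining $a^3\int_{\{1\le|x|\le3\}}|u|^2\le C\,w(3)^{1+2a}w(4)^{2-2a}$ for every $a>C(M)$, and only then let $a\to\infty$ using $w(3)<w(4)$. With this correction the remainder of your argument, including the final propagation by weak unique continuation, goes through. A minor sign slip of the same flavour: on $A_{\text{in}}$, since $2-2a<0$, an upper bound on $w$ gives a \emph{lower} bound on $w^{2-2a}$; you should instead use the lower bound $w\ge r_0/(2C_1)$ from \eqref{eq:equiC1} and $|x|\ge r_0/2$ to conclude $w^{2-2a}\le (r_0/(2C_1))^{2-2a}$, which only shifts the constant in the base.
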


 Actually, the power $a^3$ on the left-hand side of the inequality in  Lemma \ref{lem:aingeru} is not relevant for the proof of the corresponding version of Proposition \ref{prop:carleman} and any $h(a)$ with $h(a)\to\infty$ would do. On the other hand, for Theorem \ref{thm:BKquantitative}, the exact power is crucial and as it can be seen from the sketch of the proof below, Meshkov's example implies that no higher power than $3$ can be used, no matter what the choice of $w$ is. 

\subsubsection{Sketch of the proof of Theorem \ref{thm:BKquantitative}.} 
\label{sub:BKquant}

Pick $x_0$ with $|x_0|=R$ such that $M(R)=\sup_{B(x_0,1)}|u(x)|$. We now ``interchange $0$ and $x_0$'' and  ``rescale to $R=1$'' by setting $u_R(x)=u\Big(AR\big(x+\frac{x_0}{AR}\big)\Big)$, where $A$ is a large dimensional constant to be fixed later. We have $|\Delta u_R(x)|\le A^2R^2|u_R(x)|$ and if $\widetilde{x}_0=-x_0/(AR)$, then $u_R(\widetilde{x}_0)=1$ and $|\widetilde{x}_0|=\frac{1}{A}$. Moreover, $M(R)=\sup_{|x|\le 2r_0}|u_R(x)|$, where $r_0=\frac{1}{2AR}$. Pick now a cut-off function $\rho$ satisfying the properties
$$
\begin{cases}
    \rho\equiv 0\quad \text{ on } |x|<\frac{r_0}{2} \text{ and } |x|>4\\
    \rho\equiv 1\quad \text{ on } \frac{2}{3}r_0<|x|<3
\end{cases}
$$
and apply Lemma \ref{lem:aingeru} to $f=(u_R\rho)$. We obtain
\begin{align*}
    a^3\int w^{-1-2a}|u_R|^2\le C_3\int w^{2-2a}\rho^2|\Delta u_R|^2&+C_3\Big[\int_{\frac12r_0<|x|<\frac23r_0}w^{2-2a}\big(|\Delta \rho|^2|u_R^2|+2|\nabla \rho|^2|\nabla u_R|^2\big)\Big]\\&+C_3\Big[\int_{3<|x|<4}w^{2-2a}\big(|\Delta \rho|^2|u_R^2|+2|\nabla \rho|^2|\nabla u_R|^2\big)\Big].
\end{align*}
Observe that $|\Delta u_R(x)|^2\le A^4R^4|u_R(x)|^2$, hence taking
$$
a\simeq R^{4/3} 
$$
we can absorb the first term on the right-hand side into the left-hand side. The left-hand side can be seen to be greater than or equal to $
Ca^3A^{-n}R^{-n}w^{-1-2a}(2/A)$, using that $u_R(\widetilde{x}_0)=1$. The last two terms of the right-hand side are bounded from above by $(CR)^{2a-n+2}M(R)^2$ and by $CC_0^2A^2R^2w(3)^{-2-2a}$, respectively, using interior estimates to get the former. Thus, taking $A$ large enough so that $w(2/A)\le \frac{1}{10}w(3)$ and $R$ large, depending on $n, C_0$, we obtain
$$
Ca^3R^{-n}w^{-1-2a}\Big(\frac2A\Big)\le (CR)^{2a+2-n}M(R)^2,
$$
which, since $a^3\simeq R^4$, gives the desired lower bound and hence Theorem \ref{thm:BKquantitative} is proven.
 
 Note that this improves Meshkov's result in the real-valued case by allowing a slower decay of $u$ than that of Meshkov (which is precisely $\exp[-|x|^{4/3+\varepsilon}]$) for the validity of Landis' conjecture; yet this decay is faster than what Landis originally proposed. On the other hand, Meshkov's example clearly shows the limitations of \textit{Carleman's approach}, which does not distinguish between the real and complex case.  Similar quantitative unique continuation for higher order elliptic equations have been considered in \cite{Z16}, see also \cite{HWZ16}.
Interestingly, Bourgain and Kenig did not dispose of a discrete version of unique continuation or an inequality of the type \eqref{eq:Carleman}, and the case of the lattice version of the Anderson--Bernoulli model for $n\ge 2$ remained unsettled. Recently, there has been further breakthrough on the problem of localization near the edge for the Anderson--Bernoulli model on the lattice, see Section~\ref{subsec:Ding-Smart}. 

In connection with Landis' conjecture, Davey \cite{DJFA20} studied quantitative unique continuation properties of solutions to elliptic equations with singular lower order terms. Davey established bounds for the order of vanishing and the optimal rate of decay at infinity of solutions to equations of the form $\Delta u+ qu=0$, where $q\in L^t$ for any $t\in (\frac{n}{2}, \infty]$ and $n\ge 3$, see \cite[Theorem 1]{DJFA20}. This order of vanishing estimate, together with the scaling argument in \cite{BK05}, yield to a quantitative Landis-type theorem in this setting.

\section{A weaker version of Landis' conjecture: real-valued potentials}

As is clear in Sections \ref{Meshkov} and \ref{BK}, Carleman's method is a powerful technique in studying questions related to Landis’ conjecture and is able to produce optimal bounds in the
complex-valued case. But since Carleman's estimate does not seem to distinguish
real- or complex-valued functions, a direct use of such estimates to resolve Landis’
conjecture seemed likely to fail.
We explained in the previous section that Carleman's method's limitation led Kenig to ask about the study of a weaker version of Landis' conjecture in the real-valued case, i.e., to examine whether the condition $u=o(\exp[-|x|^{1+\varepsilon}])$ implies $u\equiv 0$ (\cite[Question 1]{K05}, \cite[p. 28]{K07}, see Question \ref{q:Kenig}) when $q\in L^{\infty}(\R^n)$ is \textit{real-valued}.  

In $\R^2$, for a bounded potential $q\ge 0$ (which implies that $-\Delta+q\ge 0$), Kenig, Luis Silvestre, and Jenn-Nan Wang \cite{KSW15} improved the quantitative estimate \eqref{eq:Carleman} to $\exp[-cR\log R]$ in the right hand side of the inequality, and hence proved Landis' conjecture with a slower decay rate than that of Bourgain--Kenig.  They considered a more general operator than $H$ by allowing a drift term, and they confirmed Landis' conjecture for any real solution $u$ to 
\begin{equation}
\label{eq:drift}
\Delta u -\nabla( W u)-qu=0\quad \text{ in } \R^2,
\end{equation}
where $W(w,y)=(W_1(x,y), W_2(x,y))$ and $q(x,y)$ are measurable, real-valued, and $q(x,y)\ge 0$ a.e.. Their main result is the following theorem. 

\begin{theorem}{\cite[Theorem 1.2]{KSW15}}
\label{thm:KSWglobal}
   Assume that $(W_1(x,y), W_2(x,y))$ and $q(x,y)$ are measurable, real-valued, and $q(x,y)\ge 0$ a.e. in $\R^2$, and that
    $
    \|W\|_{L^{\infty}(\R^2)}\le 1, \|q\|_{L^{\infty}(\R^2)}\le 1$.
    Let $u$ be a real solution to 
   \eqref{eq:drift}.
    Assume that $ |u|\le \exp[C_0|z|]$ and 
   $u(0)=1$, where $z=(x,y)$. Let $z_0=(x_0,y_0)$. 
    Then we have that
    $$
     \inf_{|z_0|=R}\sup_{|z-z_0|<1}|u(z)|\ge \exp[-CR\log R]\quad \text{ for } R\gg 1, \quad \text{ where }  C \text{ depends on } C_0.
    $$
\end{theorem}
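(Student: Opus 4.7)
The plan is to exploit the dimension-two setting together with the sign condition $q \geq 0$ to reduce \eqref{eq:drift} to a divergence-form equation whose solutions enjoy a Hadamard-type three-ball inequality. This inequality, complex-analytic in flavour, is genuinely stronger than the Carleman-type estimate used in Section \ref{BK}, and it is what converts the exponent $4/3$ from Theorem \ref{thm:BKquantitative} into the linear rate $R \log R$. In particular, the argument will not be a Carleman estimate, because (as already observed in Section \ref{BK}) Carleman methods cannot distinguish real from complex valued solutions.

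First I would construct a global positive solution $\phi > 0$ of \eqref{eq:drift} on $\R^2$, obtained by solving Dirichlet problems on balls $B(0, r_j)$ with strictly positive boundary data, normalising $\phi(0) = 1$, and extracting a Harnack limit as $r_j \to \infty$. The sign condition $q \geq 0$ together with $\|W\|_\infty \leq 1$ guarantees the maximum principle, and hence that the limit is strictly positive, with locally uniform Harnack bounds. Setting $v = u/\phi$, a direct computation gives a divergence-form equation with bounded drift,
\[
\nabla \cdot (\phi^2 \nabla v) = \phi^2 W \cdot \nabla v,
\]
whose ellipticity constants are locally controlled by $\phi$. In $\R^2$ such an equation admits a conjugate ``stream'' function $\widetilde v$ so that $f = v + i \widetilde v$ is $K$-quasiregular with $K = K(\|W\|_\infty, \phi)$. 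By Stoilow factorisation, $f = F \circ \chi$ with $F$ holomorphic and $\chi$ a $K$-quasiconformal homeomorphism; applying Hadamard's three-circle theorem to $F$ and pulling back through the quasisymmetry of $\chi$ yields a scale-free three-ball inequality for $u$,
\[
\sup_{B(z_0, 2r)} |u| \leq C \Bigl(\sup_{B(z_0, r)} |u|\Bigr)^{\alpha} \Bigl(\sup_{B(z_0, 4r)} |u|\Bigr)^{1-\alpha},
\]
with $\alpha \in (0,1)$ and $C$ independent of $z_0$ and $r$.

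With the three-ball inequality in hand, the global quantitative bound follows by a Bourgain--Kenig-style chaining argument along the lines of \S\ref{sub:BKquant}. Pick $z_0$ with $|z_0| = R$ realising the infimum, connect $0$ to $z_0$ by a chain of $O(R)$ overlapping unit balls $B(z_k, 1)$, and propagate the three-ball inequality along this chain, using $|u| \leq \exp[C_0|z|]$ to control the outer supremum on each ball by $\exp[CR]$. Iterating $O(R)$ times starting from $u(0)=1$ and finally rescaling the terminal balls near $z_0$ to radius $\sim 1/R$ produces the advertised bound $\exp[-CR \log R]$; the extra $\log R$ factor arises precisely from this terminal rescaling and from balancing the $\alpha^{O(R)}$ loss against the $\exp[CR]$ outer growth in the $(1-\alpha)$-exponent of the three-ball iteration.

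The main obstacle, and the place where the hypotheses $q \geq 0$ and $u, q$ real are indispensable, is the construction of the globally positive multiplier $\phi$ with quantitatively controlled oscillation ratio $\sup \phi / \inf \phi$ on unit scales, uniformly in position. Without these hypotheses the maximum principle fails, $\phi$ need not exist, and the Beltrami reduction collapses — which is consistent with the Meshkov construction of Section \ref{Meshkov}. A secondary technical point is absorbing the drift $\nabla(Wu)$ into the Beltrami framework cleanly so that the quasiregularity constant $K$ depends only on $\|W\|_\infty$ and on the Harnack bounds for $\phi$, and hence stays uniform in $z_0$; this is what forces the normalisation $\|W\|_\infty \leq 1$ and drives the dependence of the final constant $C$ on $C_0$.
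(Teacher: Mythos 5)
Your first two acts — constructing a positive multiplier $\phi$, setting $v=u/\phi$, passing to a divergence-form equation, introducing a stream function $\widetilde v$, and using a three-circle/Hadamard-type inequality for the resulting pseudoanalytic function — match the paper's strategy (though the paper writes $g=\exp[w]h$ directly via the Cauchy transform of the coefficient, rather than invoking Stoilow factorisation, which is more the route of \cite{LMNN20}). The genuine gap is in your final act.

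Your proposed chaining along $O(R)$ unit balls does not yield $\exp[-CR\log R]$; it yields something doubly exponentially worse. With $M_k=\sup_{B(z_k,1)}|u|$ and a fixed-exponent three-ball inequality $M_k\ge\bigl(M_{k-1}/(C\exp[C_0(1-\alpha)R])\bigr)^{1/\alpha}$, iterating $N\sim R$ times produces $\log M_N\gtrsim-\alpha^{-N}\cdot R\sim -Re^{cR}$. The $\alpha^{O(R)}$ loss cannot be ``balanced'' against the $\exp[CR]$ outer bound: it simply dominates. This is exactly why Bourgain and Kenig, and likewise \cite{KSW15}, do \emph{not} chain. Instead, one performs a \emph{single} rescaling $u_R(z)=u\bigl(R(z+z_0/R)\bigr)$, which solves the equation in $B_2$ with $\|q_R\|_\infty\le R^2$ and $\|W_R\|_\infty\le R$, and then applies the local order-of-vanishing estimate of Theorem \ref{thm:KSWlocal} exactly once, with $M=R^2$, $K=R$, and inner radius $r=1/R$. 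The bound $\|u_R\|_{L^\infty(B_{1/R})}\ge (1/R)^{C(\sqrt M+K)}=R^{-CR}=\exp[-CR\log R]$ then drops out immediately. Note that the $\log R$ is simply $\log(1/r)$ with $r=1/R$, and the linear-in-$R$ exponent $C(\sqrt M+K)=C(R+R)$ is precisely the payoff of the $\sqrt M$ (not $M$, not $M^{2/3}$) dependence in the local vanishing-order estimate, which is in turn the payoff of the positive multiplier.

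A secondary point: a globally constructed $\phi$ with $\phi(0)=1$ is only Harnack-controlled on unit scales, so over $B(0,R)$ its oscillation can be as large as $e^{CR}$. Your three-ball exponent $\alpha$ would therefore degrade with $R$ rather than be uniform, further undermining the chaining. The paper handles this by building the multiplier \emph{after} rescaling, on $B_2$ with the quantitative two-sided bound $\exp[-2\sqrt M]\le\phi\le\exp[2\sqrt M]$ (Theorem \ref{thm:KSWlocal}), so the oscillation shows up as a controlled constant in a single Hadamard estimate rather than as a compounding loss over a chain.
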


Let $B_r(a)$ denote the ball in $\R^2$ centered in $a$ and radius $r$; in the case when $a=0$, we simply denote $B_r(0)=B_r$.
The proof of Theorem \ref{thm:KSWglobal} is based on proving quantitative lower bounds in a small ball $B_r$ for solutions to a local problem posed in $B_2$, and then, the quantitative form of Landis' conjecture follows by the same scaling argument as in \cite{BK05}, see Subsection \ref{sub:BKquant}. We present the following local quantitative result.

\begin{thm}{\cite[Theorem 1.1]{KSW15}}
\label{thm:KSWlocal}
    Assume that $(W_1(x,y), W_2(x,y))$ and $q(x,y)$ are measurable, real-valued, and $q(x,y)\ge 0$ a.e. in $B_2$ and that there exist $K\ge1$, $M\ge1$ such that
    $\|W\|_{L^{\infty}(B_2)}\le K, \|q\|_{L^{\infty}(B_2)}\le M$.
    Let $u$ be a real solution to 
    \begin{equation}
    \label{eq:localdrift}
    \Delta u-\nabla (Wu)-qu=0\quad \text{ in } B_2.
\end{equation}
    Assume that $ \|u\|_{L^{\infty}(B_2)}\le K\exp[C_0(\sqrt{M}+K)]$ and 
    $
     \|u\|_{L^{\infty}(B_1)}\ge 1$.
    Then 
    \begin{equation}
    \label{eq:optimaldrift}
     \|u\|_{L^{\infty}(B_r)}\ge r^{C(\sqrt{M}+K)}
    \end{equation}
    for all sufficiently small $r$, where $C$ depends on $C_0$.
\end{thm}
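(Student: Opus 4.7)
The plan is to leverage the two features absent from the complex-valued setting of Theorem \ref{thm:BKquantitative}: the sign condition $q \geq 0$, which guarantees positive (super)solutions and thus enables conjugation by a multiplier, and the dimension $n=2$, which allows reduction to first-order Beltrami systems and use of Stoïlow-type factorization. The target polynomial rate $r^{C(\sqrt{M}+K)}$ matches the vanishing order of a holomorphic function of frequency $\sqrt{M}+K$ at the origin, so the overall strategy is to conjugate the equation into a form whose solutions look like holomorphic functions composed with quasiconformal homeomorphisms, and then appeal to Hadamard's three-circle theorem.

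First I would construct a positive multiplier $\varphi > 0$ on $B_{3/2}$ solving $\Delta \varphi - \nabla(W\varphi) - q\varphi = 0$, with two-sided bounds of Agmon type
\begin{equation*}
\exp[-C_1(\sqrt{M}+K)] \leq \varphi \leq \exp[C_1(\sqrt{M}+K)].
\end{equation*}
Since $-\Delta + q \geq 0$, such a $\varphi$ can be obtained by solving a Dirichlet problem with positive boundary data and combining the maximum principle with Harnack's inequality; the crucial $\sqrt{M}$ exponent (as opposed to the naive $M$) reflects the decay rate of the fundamental solution of the model operator $-\Delta + M$. Setting $v := u/\varphi$, a direct computation gives
\begin{equation*}
\Delta v + b\cdot \nabla v = 0 \quad \text{in } B_{3/2}, \qquad b := 2\nabla\log\varphi - W,
\end{equation*}
a zero-order-free equation whose drift is controlled in $L^\infty$ on each compact subregion by a constant times $\sqrt{M}+K$.

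Next, passing to the complex variable $z = x+iy$ and writing $f = \partial_z v$, the planar drift equation transforms into a Vekua-type first-order system $\bar\partial f = \alpha f + \beta\bar f$ with $\|\alpha\|_\infty + \|\beta\|_\infty \lesssim \|b\|_\infty$. Stoïlow-type factorization then furnishes a $K'$-quasiconformal homeomorphism $\chi$ on a neighbourhood of $B_1$ and a holomorphic function $F$ with $v = F\circ\chi$, the distortion $K'$ being controlled by $\|b\|_\infty$ and hence by $\sqrt{M}+K$. Applying the Hadamard three-circle inequality to $F$ on $\chi(B_1)$, combined with the upper bound $\|u\|_{L^\infty(B_2)} \leq K\exp[C_0(\sqrt{M}+K)]$ and the normalization $\|u\|_{L^\infty(B_1)} \geq 1$, yields a polynomial vanishing estimate for $F$ at $\chi(0)$ of order $O(\sqrt{M}+K)$. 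Pulling this estimate back through the $1/K'$-Hölder continuity of $\chi$ and $\chi^{-1}$, and unwinding the conjugation $u = \varphi v$, produces \eqref{eq:optimaldrift}.

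The main obstacle is Step 1: obtaining the $\sqrt{M}$ (rather than $M$) scaling in the bounds for $\varphi$. A crude construction based only on the maximum principle for $-\Delta + q$ would yield Lipschitz bounds for $\log\varphi$ proportional to $M$, which would spoil the final exponent. The sharp Agmon-type estimate proportional to $\sqrt{M}$ requires a more delicate argument — essentially comparing $\varphi$ with the ground state of $-\Delta + M$ — and is the point where the sign condition $q \geq 0$ is decisively used. A secondary technical difficulty is that $\nabla\log\varphi$ (and hence $b$) can concentrate near the boundary of the chosen subregion; this forces one to carry out the three-circle argument on a carefully chosen annulus and to patch with interior Schauder/Moser estimates, rather than applying the quasiconformal reduction on the full unit ball directly.
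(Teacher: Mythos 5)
Your overall plan — positive multiplier with Agmon-type $\sqrt{M}$ bounds, conjugation to kill the zero-order term, reduction to a planar first-order complex equation, and Hadamard's three-circle theorem — is the same framework the paper uses (following Kenig--Silvestre--Wang). The construction of $\varphi$ via the maximum principle and the observation that the $\sqrt{M}$ exponent is the crux are correct; the paper builds $\phi$ from the adjoint equation $\Delta\phi + W\cdot\nabla\phi - q\phi = 0$, a minor variant of what you propose. Two details, however, need repair.

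First, the reduction you propose sets $f = \partial_z v$ and applies the three-circle argument to $f$. But the hypothesis $\|u\|_{L^\infty(B_1)}\geq 1$ gives no lower bound on $\sup_{B_1}|\nabla v|$: if $v$ is nearly constant and large on $B_1$, the normalization of $f$ on the outer circle fails, and the three-circle inequality then says nothing. The paper circumvents this by staying at zeroth order: it introduces the stream function $\widetilde v$ of $\phi^2\nabla v$ (with $\widetilde v(0)=0$) and forms $g = \phi^2 v + i\widetilde v$. Since $|g|\geq \phi^2|v|$ and conversely $|\widetilde v|$ is controlled by $r\sup_{B_{2r}}|\nabla v|\lesssim \sup_{B_{Cr}}|v|$ via interior estimates, the three-circle inequality for $g$ transfers bilaterally to $v$ and hence to $u$ without ever passing through a derivative. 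You should form this complex combination rather than $\partial_z v$.

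Second, the factorization tool is mislabeled. For the Vekua-type equation $\bar\partial g = \alpha g + \beta\bar g$ (which is what you obtain — and what the paper obtains, since $g$ satisfies $\bar\partial g = \alpha g + \alpha\bar g$), the relevant device is not Stoïlow factorization by a quasiconformal homeomorphism; that applies to second-kind Beltrami equations $\bar\partial g = \mu\,\partial g + \nu\,\overline{\partial g}$. The correct tool is the Bers--Vekua similarity principle: after absorbing the $\bar g$ term by setting $\widetilde\alpha = \alpha + \alpha\bar g/g$ on $\{g\neq 0\}$, any solution of $\bar\partial g = \widetilde\alpha g$ factors as $g = e^{w}h$ with $h$ holomorphic and $w$ a Cauchy transform of $\widetilde\alpha$, so $\|w\|_\infty\lesssim\|\widetilde\alpha\|_\infty\lesssim\sqrt{M}+K$. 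Hadamard's theorem is then applied to $h = e^{-w}g$. The quasiconformal-composition route you allude to is the strategy of Logunov--Malinnikova--Nadirashvili--Nazarov (Section~\ref{sec:logunovetal}); it is not needed here and does not arise from this equation.
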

We briefly comment on the exponents and related literature. The exponent $\sqrt{M}+K$ in \eqref{eq:optimaldrift} is known to be optimal. For the case where $u$ is a $\lambda$-eigenfunction of the Laplace--Beltrami operator in a smooth compact Riemannian manifold without boundary, the maximal vanishing order of $u$ is less than $C\sqrt{\lambda}$, as proved by Donnelly and Fefferman \cite{DF88}, based on Carleman's method. On the other hand, using the method of the frequency function developed by Garofalo and Lin \cite{GL86, GL87}, Kukavica \cite{K98} proved that the maximal vanishing order of $u$ solving $Lu+qu=0$ in $\Omega \subset \R^n$, $n\ge 2$, is less than $C(1+\|q_{-}\|_{L^{\infty}(\Omega)}^{1/2}+(\operatorname{osc}_{\Omega}q)^2)$, where $L$ is a general uniform second order elliptic operator, $q_{-}=\max\{-q,0\}$, and $\operatorname{osc}_{\Omega}q=\sup_{\Omega}q-\inf_{\Omega}q$; see also \cite{Z16}, where the author improves the vanishing order of solutions for Schr\"odinger equations which describes the quantitative behavior of the strong unique continuation property.

The main idea in \cite{KSW15} for the proof of Theorem \ref{thm:KSWlocal}  lies in the good relation between second order elliptic equations in the plane and the Beltrami system. The assumption $q\ge 0$ allows the authors to construct a global positive multiplier which enables them to convert \eqref{eq:localdrift} into an elliptic equation in divergence form. Through this relation with Beltrami's system, Kenig et al. can derive three-ball inequalities with optimal exponent, constant, and radii, for solutions $u$ of \eqref{eq:localdrift} without using Carleman's estimate or the frequency function. Utilizing the core idea in \cite{KSW15} with appropriate modifications, a series of results on the weak Landis' conjecture in $\R^2$ have been obtained for more general second-order elliptic operators with  bounded potential $q$ which is allowed to be sign-changing, for instance in \cite{Davey20, DKW17, DKW20, DW20,KW15}.

\subsection{Ideas and sketch of the proofs of Theorems \ref{thm:KSWlocal} and \ref{thm:KSWglobal}}

To derive the improved quantitative estimate in Theorem \ref{thm:KSWlocal}, instead of a Carleman-type inequality, Kenig et al. exploited the nonnegativity of the operator in \eqref{eq:H} with $q\ge0$, and in particular, they used a \textit{three-ball inequality} derived from \textit{Hadamard's three-circle theorem}. The strategy is motivated by a good understanding of the problem in the context of Beltrami's system. We define $\bar{\partial}=
\frac12(\partial_x+i\partial_y)$. A first enlightening result is the following. 

\begin{thm}{\cite[Theorem 2.1]{KSW15}}
Let $u$ be any solution to 
$$
\bar{\partial}u=qu\quad \text{ in } \quad \R^2.
$$
Assume that $\|q\|_{L^{\infty}(\R^2)}\le 1$, $|u(z)|\le \exp[C_0|z|]$, and $u(0)=1$. Then
\begin{equation}
\label{eq:quantiBeltrami}
  \inf_{|z_0|=R}\sup_{|z-z_0|<1}|u(z)|\ge \exp[-CR\log R]\quad \text{ for } R\gg 1, \quad \text{ where }  C \text{ depends on } C_0.
\end{equation}
\end{thm}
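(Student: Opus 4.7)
The plan is to reduce the statement to classical facts about holomorphic functions on a large disk via the Bers--Vekua similarity principle, and then to extract the lower bound by combining Jensen's formula, Blaschke factorization, and a Cartan-type minimum-modulus lemma. The exponent $R\log R$ will arise naturally as a product: the factor $R$ from the number of zeros of the resulting holomorphic function (which has exponential type $\sim R$), and the extra $\log R$ from the ambient radius entering the Blaschke denominators.

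First, I would fix $R\gg 1$ and introduce the truncated Cauchy transform
\begin{equation*}
\psi(z):=-\frac{1}{\pi}\int_{B_{4R}}\frac{q(\zeta)}{\zeta-z}\,dA(\zeta),
\end{equation*}
which satisfies $\|\psi\|_{L^\infty(\R^2)}\le CR\|q\|_\infty\le CR$ and $\bar{\partial}\psi=q$ on $B_{4R}$. Thus $v:=e^{-\psi}u$ is holomorphic on $B_{4R}$, with the bounds
\begin{equation*}
|v(0)|\ge e^{-CR},\qquad \|v\|_{L^\infty(B_{3R})}\le e^{CR}\cdot e^{3C_0R}=:e^{C_1R}.
\end{equation*}
Next, I would apply Jensen's formula on $\partial B_{4R}$: if $a_1,\dots,a_N$ denote the zeros of $v$ in $B_{3R}$, then $N\log(4/3)\le \sum_j\log(4R/|a_j|)\le \log\max_{\partial B_{4R}}|v|-\log|v(0)|\le C_2 R$, so $N\le C_3R$.

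I then factor $v=B\cdot G$ on $B_{3R}$, where
\begin{equation*}
B(z):=\prod_{j=1}^N\frac{3R(z-a_j)}{9R^2-\bar a_jz}
\end{equation*}
is the Blaschke product adapted to $B_{3R}$ (so $|B|\le 1$ on $B_{3R}$ and $|B|=1$ on $\partial B_{3R}$) and $G:=v/B$ is holomorphic and non-vanishing on $B_{3R}$. The maximum principle gives $\|G\|_{L^\infty(B_{3R})}\le e^{C_1 R}$, while $|G(0)|\ge|v(0)|\ge e^{-CR}$. Since $\log|G|$ is harmonic on $B_{3R}$, applying Harnack's inequality to the nonnegative harmonic function $C_1 R-\log|G|$ yields $|G(z)|\ge e^{-C_4R}$ for all $z\in B_{R+1}$.

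For the final step, fix $z_0$ with $|z_0|=R$ and invoke Cartan's lemma on the monic polynomial $P(z):=\prod_j(z-a_j)$: the sublevel set $\{|P|\le(4e)^{-N}\}$ can be covered by disks whose radii sum to at most $1/2$, so it cannot exhaust $B(z_0,1)$. Picking $z_1\in B(z_0,1)$ outside this sublevel set, and bounding the Blaschke denominators elementarily by $|9R^2-\bar a_jz_1|\le 13R^2$, I obtain
\begin{equation*}
|B(z_1)|\ge\prod_j\frac{|z_1-a_j|}{(13/3)R}\ge\bigl((13/3)\cdot 4e\cdot R\bigr)^{-N}\ge e^{-C_5 R\log R}.
\end{equation*}
Combining with the Harnack lower bound on $G$ and with $|e^{\psi(z_1)}|\ge e^{-CR}$, we conclude $|u(z_1)|\ge e^{-CR\log R}$, and since $z_1\in B(z_0,1)$, the required estimate \eqref{eq:quantiBeltrami} follows.

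The main obstacle is less a matter of technical difficulty than of identifying the correct decomposition: once $v=B\cdot G$ is fixed, the non-vanishing factor $G$ is controlled by harmonic analysis (Harnack) and the zero-carrying factor $B$ by Cartan's lemma, but the $\log R$ loss is intrinsic to the combination and reflects both the exponential type of $v$ and the size $R$ of the disk. A subtler technical point is the application of the similarity principle when $q$ is merely bounded rather than compactly supported; this is circumvented by the $B_{4R}$-truncation in the definition of $\psi$, at the cost of allowing the representation $u=e^\psi v$ to depend on $R$.
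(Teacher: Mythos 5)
Your proof is correct, but it takes a genuinely different route from the one in \cite{KSW15} that the survey follows. The paper's argument is a two-step reduction: first, on the local problem in $B_2$ with $\|q\|_{L^\infty}\le M$, the Bers--Vekua factorization $u=e^{w}f$ combined with Hadamard's three-circle theorem for the holomorphic $f$ gives the optimal vanishing-order bound $\|u\|_{L^\infty(B_r)}\ge r^{CM}$; second, the Bourgain--Kenig translation-and-rescaling $u_R(z)=u(R(z+z_0/R))$ converts the global hypotheses into a local problem with $M\sim R$ and $r\sim R^{-1}$, so that $r^{CM}$ becomes $e^{-CR\log R}$. You instead stay at scale $R$ throughout: after the (essentially the same) similarity factorization via the truncated Cauchy transform, you count zeros with Jensen, split off a Blaschke product, control the non-vanishing factor with Harnack, and use Cartan's minimum-modulus lemma to exhibit a good point in $B(z_0,1)$. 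Both are sound; the paper's route is considerably shorter and makes the origin of the $R\log R$ transparent as ``scale times rescaled potential bound,'' while your route is more hands-on and decomposes the $\log R$ loss into its two sources (number of zeros $\sim R$, ambient radius $\sim R$ inside each Blaschke denominator), at the cost of invoking three classical lemmas instead of one. A couple of cosmetic points only: Jensen should be applied on a circle of radius strictly less than $4R$ since $v$ is holomorphic on the open ball $B_{4R}$, and you should place $z_1$ in, say, $B(z_0,3/4)$ to keep it strictly inside $B(z_0,1)$; neither affects the argument.
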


\begin{proof}
    We begin with the local equation
   \begin{equation}
   \label{eq:beltramilocal}
\bar{\partial}u=qu\quad \text{ in } \quad B_2\quad \text{ with }\quad \|q\|_{L^{\infty}(B_2)}\le M.
\end{equation}
Any solution to \eqref{eq:beltramilocal} can be written as 
$
u=\exp[w]f
$,
with $f$ holomorphic in $B_2$ and 
$$
w(z)=-\frac{1}{\pi}\int_{B_2}\frac{q(\zeta)}{\zeta-z}\,d\zeta.
$$
By Hadamard's three-circle theorem (see, for example, \cite{A78}), we have
$$
\|f\|_{L^{\infty}(B_{r_1})}\le \|f\|_{L^{\infty}(B_{r})}^{\theta}\|f\|_{L^{\infty}(B_{r_2})}^{1-\theta},
$$
where $r<r_1<r_2$ and 
$
\theta=\frac{\log(\frac{r_2}{r_1})}{\log(\frac{r_2}{r})}
$.
The choice $r_1=1$ and $r_2=\frac32$ yields
$
\exp[-CM]\le \|u\|_{L^{\infty}(B_{r})}^{\theta}
$
and hence
\begin{equation}
\label{eq:quantilocalBel}
\|u\|_{L^{\infty}(B_{r})}\ge r^{CM}, \quad \text{ where }  C \text{ depends on } C_0.
\end{equation}

Now we use the scaling argument in \cite{BK05} (see Subsection \ref{sub:BKquant}), which we briefly recall here. Denote $|z_0|=R$. Let $u_R(z)=u\big(R(z+\frac{z_0}{R})\big)$, then $u_R$ satisfies the equation \eqref{eq:beltramilocal} with a rescaled potential $q_R(z)=Rq\big(R(z+\frac{z_0}{R})\big)$, where 
$\|q_R\|_{L^{\infty}(B_2)}\le R$.
It can be seen that $|u_R(z)|\le \exp[CC_0R]$ for all $z\in B_2$ and 
$
u_R\big(-\frac{z_0}{R}\big)=u(0)=1$
with $\big|\frac{z_0}{R}\big|=1$. Hence, we have 
$$
\|u_R\|_{L^{\infty}(B_1)}\ge 1
$$
and the quantitative estimate \eqref{eq:quantiBeltrami} follows from \eqref{eq:quantilocalBel} by taking $r=R^{-1}$, namely
\begin{equation*}
\|u_R\|_{L^{\infty}(B_{R^{-1}})}\ge R^{-CR}.
\end{equation*}
\end{proof}

We turn to Theorems \ref{thm:KSWlocal} and \ref{thm:KSWglobal}. We will first show a sketch of the proofs without the presence of $W$ based on the ideas for Beltrami's system.
\begin{thm}
\label{thm:toywithout}
    Let $u$ be a real solution to 
\begin{equation}
\label{eq:otravezlocal}
\Delta u-q(z)u=0\quad \text{ in } \quad B_2\subset \R^2,
\end{equation}
with $q(z)\ge0$ and $\|q\|_{L^{\infty}(B_2)}\le M$, for some $M\ge1$.  Assume that $u$ satisfies
\begin{equation}
    \label{eq:cotaB2}
    \|u\|_{L^{\infty}(B_{2})}\le \exp[C_0\sqrt{M}]
\end{equation}
for some $C_0>0$ and furthermore $\|u\|_{L^{\infty}(B_1)}\ge1$. Then we have that
$$
\|u\|_{L^{\infty}(B_r)}\ge r^{C\sqrt{M}}, \quad \text{ where }  C \text{ depends on } C_0.
$$
\end{thm}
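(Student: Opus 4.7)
The plan is to exploit the hypothesis $q \geq 0$ to reduce \eqref{eq:otravezlocal} to a divergence-form equation, which in the plane couples naturally to a Beltrami system, so that Hadamard's three-circle theorem applies as in the pure Beltrami argument given above. The starting point is the construction of a \emph{positive multiplier} $\phi \in C^2(B_{3/2})$ solving $\Delta \phi = q\phi$ in $B_{3/2}$ with sharp two-sided bounds $e^{-C\sqrt M} \le \phi \le e^{C\sqrt M}$. Because $-\Delta + q$ is nonnegative (as $q \ge 0$), such a $\phi$ may be obtained, for example, as the positive solution of a Dirichlet problem on a slightly larger ball with positive boundary data, and the exponential bounds in $\sqrt M$ follow by comparison with the explicit radial solution of the model equation $\Delta \Phi = M\Phi$, namely $\Phi(r) = I_0(\sqrt{M}\,r)$, together with a Harnack chain argument.

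With $\phi$ in hand, write $u = \phi v$. A direct computation using $\Delta\phi = q\phi$ gives $\operatorname{div}(\phi^2 \nabla v) = 0$ in $B_{3/2}$. In two dimensions, the condition that $\phi^2 \nabla v$ be divergence-free means it equals the $90^\circ$ rotation of the gradient of a conjugate stream function $\tilde v$, and the complex function $f = v + i\tilde v$ then satisfies a Beltrami equation $\bar\partial f = \mu\,\partial f$ with pointwise dilatation
\[
|\mu| \;=\; \frac{|\phi^2 - 1|}{\phi^2 + 1} \;<\; 1.
\]
By Stoilow factorization, $f = F \circ \chi$, where $F$ is holomorphic and $\chi$ is a $K$-quasiconformal homeomorphism whose distortion $K$ is controlled by $\|\phi\|_{L^\infty}\|\phi^{-1}\|_{L^\infty} \le e^{C\sqrt{M}}$. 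This is the planar analogue of the representation $u = e^w f$ used in the Beltrami case treated above.

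With this factorization, I would apply Hadamard's three-circle theorem to the holomorphic function $F$ on circles of radii $r < r_1 < r_2$, then transfer the inequality back to $v$ via Mori-type distortion estimates for $\chi$. The interpolation exponent $\theta = \log(r_2/r_1)/\log(r_2/r)$, combined with a quasiconformal distortion of size $K \sim e^{C\sqrt M}$, produces exactly a $\sqrt M$ scaling in the exponent of the resulting three-ball inequality for $v$. Rewriting the hypotheses in terms of $v$, the upper bound $\|u\|_{L^\infty(B_2)} \le e^{C_0\sqrt M}$ and lower bound $\|u\|_{L^\infty(B_1)} \ge 1$ translate, using the two-sided bounds on $\phi$, into $\|v\|_{L^\infty(B_2)} \le e^{C_1\sqrt M}$ and $\|v\|_{L^\infty(B_1)} \ge e^{-C_2\sqrt M}$. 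Rearranging the three-ball inequality with $r_1 = 1$, $r_2 = 3/2$ then yields $\|v\|_{L^\infty(B_r)} \ge r^{C\sqrt M}$, from which $\|u\|_{L^\infty(B_r)} \ge r^{C\sqrt M}$ follows after absorbing the $\phi$ factors into the constant $C$.

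The main obstacle, and the reason for the $\sqrt M$ rather than $M$ in the exponent, is the sharp control of the positive multiplier: showing $\phi$ exists on $B_{3/2}$ with bounds $\phi \asymp e^{\pm C\sqrt M}$ requires comparison with the model $\Delta\Phi = M\Phi$ rather than naive Carleman-type estimates, and any slack here would immediately inflate the exponent to $M$ and destroy the improvement over \cite{BK05}. Once $\phi$ is built, the quasiconformal/Beltrami machinery produces the three-ball inequality essentially as a black box, but one must track carefully how $\|\phi\|_{L^\infty}\|\phi^{-1}\|_{L^\infty}$ enters the dilatation $K$ and in turn the interpolation constant, to confirm that the final exponent really is $C\sqrt M$.
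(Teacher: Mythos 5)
Your plan diverges from the paper's argument at a point where the divergence is fatal to the claimed exponent. After introducing the stream function $\tilde v$, you form $f = v + i\tilde v$, observe that it solves a Beltrami-type equation with dilatation $|\mu| = \frac{|\phi^2 - 1|}{\phi^2 + 1}$, and invoke Stoilow factorization $f = F\circ\chi$ with $\chi$ a $K$-quasiconformal map, $K \sim e^{C\sqrt M}$, followed by Mori distortion estimates. But in Mori's theorem the distortion constant $K$ enters as a \emph{power} of the radius ($|\chi(z)| \asymp |z|^{K^{\pm 1}}$), so passing from a ball of radius $r$ for $f$ to a ball for $F$ shifts the radius to something of order $r^{K}$; the three-circle theorem for $F$ then returns a lower bound of order $r^{K\cdot C'\sqrt M}$, not $r^{C\sqrt M}$. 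Since $K \sim e^{C\sqrt M}$, the exponent you would actually obtain is $e^{C\sqrt M}\sqrt{M}$, which is exponentially worse than the target and does not even recover Bourgain--Kenig. Your assertion that $K$ ``combined with $\theta$ produces exactly a $\sqrt M$ scaling'' is where the proposal breaks. (For comparison, Logunov--Malinnikova--Nadirashvili--Nazarov \emph{do} use a quasiconformal/Stoilow factorization, but only after engineering the multiplier so that $\|\varphi - 1\|_\infty = O(\varepsilon^2) = O(1/\log R)$, making $K = 1 + O(\varepsilon^2)$; in your setting $K$ is not close to $1$ and the same trick is unavailable.)

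The paper avoids this trap by using a different complex-analytic reduction: instead of $f = v + i\tilde v$, one sets $g = \phi^2 v + i\tilde v$ and computes $\bar\partial g = (\bar\partial\log\phi)(g+\bar g)$, which can be absorbed into a \emph{linear} Carleman--Bers--Vekua equation $\bar\partial g = \tilde\alpha\, g$ with $\|\tilde\alpha\|_\infty \le 2\|\bar\partial\log\phi\|_\infty$. The similarity principle then gives $g = e^{w}h$ with $h$ holomorphic and $\|w\|_\infty \lesssim \|\tilde\alpha\|_\infty$; the factor $e^{\pm\|w\|_\infty}$ enters Hadamard's inequality only \emph{multiplicatively}, not as a power of the radius, so the three-ball inequality retains its exponent. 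Crucially, this means the controlling quantity is not $\log(\|\phi\|_\infty\|\phi^{-1}\|_\infty)$ but rather $\|\nabla\log\phi\|_{L^\infty}$, and the key technical lemma the proof rests on (and which your proposal never mentions) is the \emph{gradient} estimate $\|\nabla\log\phi\|_{L^\infty(B_{7/5})} \le C\sqrt M$, established in \cite[Lemma 2.2]{KSW15} from the Riccati equation $\Delta\psi + |\nabla\psi|^2 = q$ satisfied by $\psi = \log\phi$. Without this lemma there is no $\sqrt M$. Your multiplier construction and its two-sided $e^{\pm C\sqrt M}$ bounds are plausible and match the paper, but they alone are insufficient; you also need the gradient bound, and you need to feed it through a representation in which it appears additively rather than as a quasiconformal distortion.
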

\begin{proof}
The first crucial step is that thanks to the non-negativity of $q$, it can be proven that there exists a positive solution $\phi$ satisfying \eqref{eq:otravezlocal} and 
\begin{equation}
\label{eq:estimacionphi}
\exp[-2\sqrt{M}]\le \phi(z)\le \exp[2\sqrt{M}], \quad \text{ for all } z\in B_2.
\end{equation}
Set now $u=\phi v$, then $v$ satisfies
$
\nabla\cdot (\phi^2\nabla v)=0$ in $B_2$.
Let $\widetilde{v}$ with $\widetilde{v}(0)=0$ be such that
$$
\begin{cases}
\partial_y \widetilde{v}=\phi^2\partial_xv,\\
-\partial_x \widetilde{v}=\phi^2\partial_yv.
\end{cases}
$$
Let $g=\phi^2v+i\widetilde{v}$, then $g$ satisfies
\begin{equation}
    \label{eq:eqcompleja}
    \bar{\partial}g= \bar{\partial}\phi^2v=\frac{ \bar{\partial}\phi^2}{2\phi^2}(g+\bar{g})\quad \text{ in } B_2.
\end{equation}
On the other hand, let $\alpha=\frac{ \bar{\partial}\phi^2}{2\phi^2}=\frac{ \bar{\partial}\phi}{\phi}=\bar{\partial}\log (\phi)$,
thus \eqref{eq:eqcompleja} is equivalent to 
\begin{equation}
    \label{eq:equivalente}
    \bar{\partial}g=\alpha g+\alpha \bar{g}\quad \text{ in } B_2.
\end{equation}
Defining 
$$
\widetilde{\alpha}=\begin{cases}
\alpha+\alpha\bar{g}/g, \quad \text{ if } g\neq 0,\\
0, \quad \text{ otherwise},
\end{cases}
$$
then \eqref{eq:equivalente} is reduced to 
\begin{equation}
\label{eq:finalcomp}
\bar{\partial}g=\widetilde{\alpha}g\quad \text{ in } B_2.
\end{equation}

We need a precise estimate for $\alpha=\bar{\partial} \log(\phi)$. If we denote $\psi=\log \phi$ we see, in view of \eqref{eq:estimacionphi}, that $\psi$ satisfies 
$
|\psi(z)|\le 2\sqrt{M}$ in $ B_2$
and solves the equation $\Delta\psi+|\nabla \psi|^2=q$ in $B_2$. The crucial estimate $\|\nabla \psi\|_{L^{\infty}(B_{7/5})}\le C\sqrt{M}$, where $C>0$ is an absolute constant, is proven in \cite[Lemma 2.2]{KSW15}. Using this, we have $\|\alpha\|_{L^{\infty}(B_{7/5})}\le C\sqrt{M}$, which implies $\|\widetilde{\alpha}\|_{L^{\infty}(B_{7/5})}\le C\sqrt{M}$. Let $w(z)$, $z=x+iy$, be defined by 
$$
w(z)=-\frac{1}{\pi}\int_{B_{7/5}}\frac{\widetilde{\alpha}(\xi)}{\xi-z}\,d\xi.
$$
Then $\bar{\partial}w=\widetilde{\alpha}$ in $B_{7/5}$ and $\|w\|_{L^{\infty}(B_{7/5})}\le C\|\widetilde{\alpha}\|_{L^{\infty}(B_{7/5})}\le C\sqrt{M}$. Any solution $g$ of \eqref{eq:finalcomp} in $B_{7/5}$ is given by $g(z)=\exp[w(z)]h(z)$, where $h$ is holomorphic in $B_{7/5}$. Applying Hadamard's three-circle theorem to $h=\exp[-w]g$ and the estimates above we obtain 
$$
\|g\|_{L^{\infty}(B_{r_1})}\le\exp[C\sqrt{M}] \|g\|_{L^{\infty}(B_{r/2})}^{\theta}\|f\|_{L^{\infty}(B_{r_2})}^{1-\theta},
$$
where $\theta=\frac{\log(\frac{r_2}{r_1})}{\log(\frac{2r_2}{r})}$.
With the appropriate choice of $r, r_1$, and $r_2$, recalling the definition of $g$ and using interior estimates for $\phi$ and $u$, it is possible to conclude that 
$$
\|u\|_{L^{\infty}(B_{1})}\le \exp[C\sqrt{M}]\|u\|_{L^{\infty}(B_{r})}^{\theta},
$$
where $C$ depends on $C_0$. From here, the thesis follows.
\end{proof}

Using the scaling argument of \cite{BK05} as above, Theorem \ref{thm:toywithout} implies a quantitative version of Landis' conjecture as in Theorem \ref{thm:KSWglobal} with $W=\bar{0}$, see \cite[Theorem 2.6]{KSW15}. For the proof of Theorem \ref{thm:KSWglobal} we can follow the scheme above when considering equation \eqref{eq:drift} with the assumptions on Theorem \ref{thm:KSWlocal}. To construct a positive multiplier for \eqref{eq:drift}, we consider the adjoint operator, i.e.
$$
L^*u=\Delta u+W\cdot \nabla u-qu.
$$
As above, letting $u=\phi v$, then $v$ satisfies
$
\nabla\cdot \big(\phi^2(\nabla v-Wv)\big)=0$, in  $B_2$.
Let $\widetilde{v}$ with $\widetilde{v}(0)=0$ be such that
$$
\begin{cases}
\partial_y \widetilde{v}=\phi^2\partial_xv-\phi^2W_1v,\\
-\partial_x \widetilde{v}=\phi^2\partial_yv-\phi^2W_2 v.
\end{cases}
$$
Let $g=\phi^2v+i\widetilde{v}$, then $g$ solves
\begin{equation}
    \label{eq:eqcomplejabis}
    \bar{\partial}g= \gamma( g+\bar{g})\quad \text{ in } B_2,
\end{equation}
where $\gamma=\bar{\partial}\log(\phi)+\frac14(W_1+iW_2)$.
Defining 
$$
\widetilde{\gamma}=\begin{cases}
\gamma+\gamma\bar{g}/g, \quad \text{ if } g\neq 0,\\
0, \quad \text{ otherwise},
\end{cases}
$$
then \eqref{eq:eqcomplejabis} becomes 
\begin{equation}
\label{eq:finalcompbis}
\bar{\partial}g=\widetilde{\gamma}g\quad \text{ in } B_2.
\end{equation}
Let $\phi=\exp[\psi]$, then $\psi$ satisfies 
$
|\psi(z)|\le 2(\sqrt{M}+K)$ in $B_2$ 
and solves the equation $\Delta\psi+|\nabla \psi|^2+W\cdot \nabla \psi=q$ in $B_2$. Similarly, the estimate $\|\nabla \psi\|_{L^{\infty}(B_{7/5})}\le C(\sqrt{M}+K)$, where $C>0$ is an absolute constant, is proven in \cite[Lemma 3.1]{KSW15}. Using this, we have that  $\|\widetilde{\gamma}\|_{L^{\infty}(B_{7/5})}\le C(\sqrt{M}+K)$. Therefore, any solution $g$ of \eqref{eq:finalcompbis} in $B_{7/5}$ can be represented as $g(z)=\exp[\widetilde{w}(z)]h(z)$, where $h$ is holomorphic in $B_{7/5}$ and $\|\widetilde{w}\|_{L^{\infty}(B_{7/5})}\le C(\sqrt{M}+K)$. The remainder of the proof follows as in the proof of Theorem \ref{thm:toywithout}.

In addition to the case of the whole plane, Kenig, Silvestre, and Wang also establish a quantitative version of Landis' conjecture in an exterior domain \cite[Theorem 1.5]{KSW15}. Here, the scaling argument fails to imply the conjecture, and the main tool is instead a Carleman estimate. We point out that a nice summary of \cite{KSW15} can be read in \cite{MAGF18}.
The techniques from complex analysis used in \cite{KSW15} have been used in further works concerning observability inequalities closely related to Landis' conjecture, see \cite{ZZ24}.

\section{Weak Landis' conjecture is true in dimension two}
\label{sec:logunovetal}

The approach in \cite{KSW15} relied on the assumption $q\ge 0$. In \cite[Section 6]{KSW15}, Kenig et al. suggested an alternative to remove such an assumption, which was based on considering a new function $u_{\xi}(x,y)=\cosh(\lambda \xi)u(x,y)$, $\lambda\ge \sqrt{\|q\|_{L^{\infty}}}$, such that if $u$ satisfies $\Delta_{x,y}u+q u=0$, then $u_{\xi}$ satisfies a new equation with the non-negative potential $\lambda^2-q$. Consequently, a new global positive solution can be constructed, but an optimal three-balls inequality is not available. New ideas appeared to be needed. 

In 2020, Alexander Logunov, Malinnikova, Nicolai Nadirashvili, and Fedor Nazarov \cite{LMNN20} managed to remove the assumption $q\ge 0$ and confirmed the weak version of Landis' conjecture in dimension two. 
\begin{theorem}[{\cite[Theorem 1.1]{LMNN20}}]
\label{thm:LMNN}
      Suppose that $-\Delta u+q u=0$ on $\R^2$, where $u$ and $q$ are real-valued and $|q|\le 1$. If $|u(x)|\le  C\exp[-|x|\log^{1/2}|x|]$, $|x|>2$, where $C$ is a sufficiently large absolute constant, then $u\equiv0$.  
\end{theorem}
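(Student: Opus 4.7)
The plan is to argue by contradiction: assume $u \not\equiv 0$ and derive a lower bound on $\sup_{B_1(x_0)} |u|$ over points $|x_0| = R$ that is incompatible with the decay hypothesis $|u(x)| \le C\exp(-|x|\log^{1/2}|x|)$. As in Subsection~\ref{sub:BKquant}, this reduces, via the Bourgain--Kenig scaling trick, to proving a sufficiently strong quantitative lower bound for real solutions of $-\Delta u + q u = 0$ with $|q| \le 1$ on a fixed ball, and then iterating it across dyadic scales $2^k \le |x| \le 2^{k+1}$.

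The essential new obstacle, compared with \cite{KSW15}, is that $q$ is now allowed to change sign. The Kenig--Silvestre--Wang argument crucially uses the nonnegativity $q \ge 0$ to produce a \emph{positive} local multiplier $\phi$, so that $u = \phi v$ satisfies the divergence-form equation $\nabla\cdot(\phi^2 \nabla v) = 0$, which is in turn recast as a Beltrami system $\bar\partial g = \widetilde\alpha g$ amenable to Hadamard's three-circle theorem with exponent $\|\widetilde\alpha\|_\infty \lesssim \sqrt{\|q\|_\infty}$. When $q$ is sign-changing no such global multiplier exists, and the $\cosh(\lambda\xi)u$ lift suggested in \cite[Section~6]{KSW15} raises the dimension and destroys precisely the planar Beltrami structure on which the sharp exponent depends.

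My plan is to replace the single global positivity by the \emph{local} positivity coming from the nodal set $\{u = 0\}$. For a real-valued solution of a second-order elliptic equation in a planar domain, this set is, away from critical points, a locally finite union of smooth curves that partition the ambient region into nodal domains on each of which $u$ has a definite sign. Within each nodal domain the KSW Beltrami machinery can be run locally, yielding a three-ball inequality with exponent $\sqrt{\|q\|_\infty}$. I would then introduce a covering/patching scheme that glues these local estimates across nodal lines into a single three-ball inequality on a large ball $B_R$, combined with a frequency-function argument in the spirit of Garofalo--Lin \cite{GL86,GL87} in order to control the doubling index of $u$ by roughly $C R$, up to logarithmic corrections.

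I expect the hardest step to be the quantitative control on the loss incurred when crossing nodal lines and near critical points, where the local positive multiplier degenerates and the associated Beltrami coefficient blows up. The factor $\log^{1/2}|x|$ in the hypothesis is presumably dictated by exactly this loss; it is the gap between the genuine exponential decay rate $\exp(-|x|)$ (sharp by the Bessel example in Section~\ref{sec:Intro}) and the rate that a nodal-patching argument can rule out. Concretely, my target is to show that on each dyadic annulus the maximum of $|u|$ cannot drop by more than a factor $\exp(c\, 2^k/\sqrt{k})$ compared with the previous one; iterating this lower bound up to $|x| = R$ would contradict the hypothesis $\sup_{|x| \ge R/2} |u| \le C\exp(-R \log^{1/2} R)$ for all sufficiently large $R$, and hence force $u \equiv 0$.
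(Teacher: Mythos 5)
Your starting intuition --- that the nodal set of $u$ should supply the local positivity that $q\ge 0$ supplied globally in Kenig--Silvestre--Wang --- is exactly the right insight, and it is indeed the launching point of the Logunov--Malinnikova--Nadirashvili--Nazarov proof. But the mechanism you propose diverges from theirs at the crucial step, and I do not think your version can be completed as written.

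You plan to run the KSW Beltrami machinery \emph{separately in each nodal domain} and then ``glue'' the resulting three-ball inequalities across nodal lines, estimating a loss per crossing and controlling the number of crossings with a Garofalo--Lin frequency-function argument. The problem is that nodal domains of a real solution to $-\Delta u + qu=0$ in $B_R$ can be arbitrarily thin and ramified; a three-ball inequality confined to a single nodal domain says nothing at scales comparable to $R$, and there is no clean quantitative ``loss per crossing'' to control --- the multiplier you would build in a nodal domain degenerates precisely as you approach the nodal line, which is where you need to hand off to the neighbouring domain. LMNN avoid this entirely. Rather than patching local constructions, they \emph{perforate} the ball: they add to the nodal set $F_0$ a system $F_1$ of finitely many $C\varepsilon$-separated closed disks of radius $\varepsilon$, chosen so that $F_0\cup F_1\cup\{|z|\ge R\}$ is a $3C\varepsilon$-net. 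On the resulting porous open set $\Omega = B_R\setminus(F_0\cup F_1)$ the Poincar\'e constant is controlled by $C'\varepsilon^2$, and this lets them build a \emph{single} positive multiplier $\varphi$ with $\Delta\varphi + q\varphi = 0$ in $\Omega$, $\varphi - 1\in W^{1,2}_0(\Omega)$, and --- the key quantitative statement --- $\|\varphi - 1\|_\infty \le C''\varepsilon^2$. Writing $u=\varphi f$, the divergence equation $\operatorname{div}(\varphi^2\nabla f)=0$ then holds across $F_0$ in all of $B_R\setminus F_1$; the nodal lines create no obstruction because $\varphi$ is uniformly bounded above and below near $1$. The only genuine obstructions are the added disks $F_1$. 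Stoilow factorization produces $f = h\circ g$ with $g$ a $K$-quasiconformal self-map of $B_R$ fixing $0$ and $h$ harmonic in $B_R\setminus g(F_1)$; since $K \le 1 + C\varepsilon^2$, taking $\varepsilon\sim 1/\sqrt{\log R}$ keeps the distortion bounded across scales between $1/R$ and $R$. This choice of $\varepsilon$ --- balancing the Poincar\'e constant and the quasiconformal distortion against the hole size --- is where the $\log^{1/2}$ enters; it is not a per-crossing loss in the sense you describe.

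The residual problem is then a statement about a real harmonic function $h$ in a ball punctured by $1000$-separated unit disks, with the extra structural feature that $h$ does not change sign on the annulus $5D_j\setminus D_j$ around each puncture. This sign preservation is inherited from the fact that the original disks were placed away from $F_0$ inside single nodal domains of $u$, and it is indispensable: without it the harmonic problem has counterexamples. Your proposal does not identify or use this feature, and it also replaces LMNN's direct harmonic analysis of the punctured problem by a Garofalo--Lin frequency function, which they do not use. Their doubling control instead comes from the normalization \eqref{eq:assumptionLMNN} carried through the quasiconformal change of variables; the hard combinatorial step, \cite[Theorem 5.3]{LMNN20}, is a bespoke argument for sign-preserving harmonic functions in punctured planar domains, not a frequency-function iteration. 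So while your big-picture reading of the obstruction (Carleman can't see the sign of $q$; nodal structure can) is correct, the patching-by-nodal-domains plan is not a viable route, and the perforation-plus-global-multiplier-plus-Stoilow reduction is a genuinely different, and essential, device.
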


\begin{rmk}
   In a private communication after completing this manuscript, F. Nazarov sent us an example \cite{LN24} of a real non-zero function satisfying $|\Delta u|\le |u|$ and $|u(x)|\le C \exp[-c|x| \log^{1/2}|x|)]$, therefore showing the optimality of Theorem \ref{thm:LMNN} and hence disproving the (strong) Landis' conjecture in the plane.
    \end{rmk}

The proof of Theorem \ref{thm:LMNN} combines techniques of quasiconformal mappings and the structure of the nodal set (zero set) of the solution $u$. In particular, the method involves nodal sets of $u$ and holes that are made in nodal domains (connected components of the complement of the zero set), so the problem is reduced to a simpler problem in certain perforated domains. Some two-dimensional tools are used in the proof and weak Landis' conjecture in higher dimensions is still open in its full generality. Logunov et al. also proved a quantitative local version of Landis' conjecture which improves the one by Bourgain and Kenig in Theorem~\ref{thm:BKquantitative}.

\begin{thm}[{\cite[Theorem 1.2]{LMNN20}}]
\label{thm:quantLMNN}
Let $u$ be a real solution to \eqref{eq:H} in $B(0,2R)\subset \R^2$, where $q$ is real-valued and $|q|\le 1$. Suppose that $|u(0)|=\sup_{B(0,2R)}|u|=1$. Then for any $x_0$ with $|x_0|=R/2>2$, we have
$$
\sup_{B(x_0,1)}|u|\ge \exp[-CR\log^{3/2}R]
$$
with some absolute constant $C>0$.
\end{thm}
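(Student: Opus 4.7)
The plan is to reduce Theorem~\ref{thm:quantLMNN} to a propagation-of-smallness argument along an overlapping chain of balls joining the origin to $x_0$, combining the quasiconformal three-ball machinery of Theorem~\ref{thm:KSWlocal} with a nodal-set perforation trick to bypass the sign restriction on $q$. The central obstacle relative to the Kenig--Silvestre--Wang setting is that here $q$ is allowed to change sign, so the globally positive multiplier $\phi$ used to convert $\Delta u - qu = 0$ into a divergence-form equation (and then into a Beltrami equation) is not directly available on $B(0,2R)$.

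The first step is a local three-ball inequality for real solutions on a ball $B(z,4r) \subset B(0,2R)$ that lies inside a single nodal domain $\Omega$ of $u$. On such a set $u$ has fixed sign, so one can produce a positive local solution $\phi$ of $-\Delta\phi + q\phi = 0$ with $\exp(-C\sqrt{M}) \le \phi \le \exp(C\sqrt{M})$ on a slightly larger scale (by solving the Dirichlet problem with positive boundary data, using that $-\Delta + q$ is coercive on small scales because $|q|\le 1$). Writing $u = \phi v$ and using the stream function $\widetilde v$, the complex map $g = \phi^2 v + i\widetilde v$ satisfies a Beltrami equation $\bar\partial g = \widetilde\alpha g$ with $\|\widetilde\alpha\|_\infty = O(\sqrt{M})$, so that $g = e^{w} h$ for some holomorphic $h$ with $\|w\|_\infty = O(\sqrt{M})$. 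Hadamard's three-circle theorem for $h$ then yields
\[
\sup_{B(z,r)}|u| \;\le\; e^{C\sqrt{M}} \Big(\sup_{B(z,r/2)}|u|\Big)^{\theta} \Big(\sup_{B(z,2r)}|u|\Big)^{1-\theta},
\]
with a fixed $\theta \in (0,1)$ as long as $B(z,4r)$ stays in a single nodal domain.

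The second step, which is the key innovation and the hard part of the argument, is to produce a comparable three-ball inequality on balls that meet the nodal set $\{u = 0\}$. Following the strategy of Logunov--Malinnikova--Nadirashvili--Nazarov, one removes carefully chosen small disks around the nodal set to obtain a perforated domain on which a positive multiplier $\phi$ still exists with quantitatively controlled oscillation; the Beltrami/quasiregular machinery then applies on the perforated region. The complexity of the nodal structure on scale $r$ inside a ball of radius $R$ is controlled using two-dimensional tools (nodal curves separate the plane, and the number of nodal components intersecting a given disk grows at most polynomially in $R$). Each traversal of the nodal set costs a multiplicative factor in the three-ball constant of the form $\exp(C\sqrt{\log R})$, rather than the clean $\exp(C\sqrt{M})$ of the KSW case.

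Finally, one iterates: choose an overlapping chain of balls $B(z_j, r_j)$ of length $O(R)$ joining $0$ to $x_0$, with $r_j$ comparable to unity near the endpoints and adjusted across the chain so that each link sees only $O(\log R)$ nodal crossings on average. Applying the perforated three-ball inequality at each link and composing the exponents gives
\[
\sup_{B(x_0,1)}|u| \;\ge\; \big(\sup_{B(0,2R)}|u|\big)^{1-\theta^{O(R)}} \cdot \exp\!\big(-C\,R \cdot (\log R)^{3/2}\big),
\]
where the $(\log R)^{3/2}$ is the compounded cost of the perforation factors along the chain. The anchor $|u(0)|=1$ and the normalization $\sup_{B(0,2R)}|u|=1$ feed directly into the first factor to yield the claimed lower bound. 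The main difficulty throughout is quantitative: making the perforation argument uniform enough to track the exact dependence on $R$, and in particular showing that the extra logarithmic overhead really is at most $(\log R)^{3/2}$ and not a larger power.
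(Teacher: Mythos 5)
Your proposal takes a genuinely different route from the paper, and the route you choose has a quantitative gap that makes it fail.

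In the paper, Theorem~\ref{thm:quantLMNN} is a one-line corollary of Theorem~\ref{thm:local}: since $|u(0)|=\sup_{B(0,2R)}|u|=1$, for any $x$ with $|x|=R/2$ both $B(x,R)$ and $B(x,R/2)$ contain the origin, so the doubling assumption \eqref{eq:assumptionLMNN} holds with $N=0$, and applying Theorem~\ref{thm:local} to $u(\cdot+x)$ with $r=1$ gives $\sup_{B(x,1)}|u|\ge R^{-CR\log^{1/2}R}=\exp[-CR\log^{3/2}R]$. The whole content lies in Theorem~\ref{thm:local}, which is a \emph{single long-range} three-ball inequality obtained by perforating $B(0,R)$ \emph{all at once} at scale $\varepsilon\sim(\log R)^{-1/2}$, constructing one positive multiplier on the perforated domain, applying Stoilow factorization to get one $K$-quasiconformal change of variables with $K\le 1+C\varepsilon^2$, and estimating the resulting harmonic function on a single punctured disk $B(0,R')$ with $R'\sim R\sqrt{\log R}$. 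The choice $\varepsilon\sim(\log R)^{-1/2}$ is forced by the requirement that the quasiconformal distortion stay bounded over the range of scales from $1/R$ to $R$, and this is exactly where the factor $\log^{1/2}R$ in Theorem~\ref{thm:local}, hence $\log^{3/2}R$ in Theorem~\ref{thm:quantLMNN}, comes from.

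Your scheme instead chains local three-ball inequalities along a path of $O(R)$ balls from $0$ to $x_0$. This cannot produce the claimed bound. With unit-scale balls and a fixed exponent $\theta\in(0,1)$ in each link, a chain of length $n\sim R$ compounds to an exponent $\theta^n=\exp(-cR)$ on the small-ball term; the resulting lower bound is of order $\exp[-\exp(cR)]$, which is vacuous compared with $\exp[-CR\log^{3/2}R]$. (Your displayed formula $\sup_{B(x_0,1)}|u|\ge(\sup_{B(0,2R)}|u|)^{1-\theta^{O(R)}}\exp[-CR\log^{3/2}R]$ just collapses to the second factor since $\sup_{B(0,2R)}|u|=1$, and there is no mechanism in the composition producing that second factor as the accumulated cost; the factor $\exp(C\sqrt{\log R})$ per nodal traversal is asserted, not derived.) If instead one chains $O(\log R)$ balls of doubling radii, the three-ball constant on a ball of radius $\rho$ deteriorates like $\exp(C\rho)$ after rescaling the potential, and the compounded constants produce a bound of order $\exp[-CR^2]$, again missing the target by a power of $R$. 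The lesson of \cite{LMNN20} is precisely that one must avoid iterating three-ball inequalities across many scales: the perforation and quasiconformal reduction are performed once, globally, so that Hadamard's three-circle theorem is invoked only at the outermost scale. You correctly identify perforation as the device that removes the sign condition on $q$, but replacing the global reduction by a link-by-link propagation scheme loses the one feature that makes the exponent $R\log^{3/2}R$ achievable.
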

Theorem \ref{thm:LMNN} and Theorem \ref{thm:quantLMNN} follow from  Theorem \ref{thm:local}, where one does not assume that $|u(0)|=\sup_{B(0,2R)}|u|=1$ and it can be understood as a version of a three-balls inequality.

\begin{thm}[{\cite[Theorem 2.2]{LMNN20}}]
\label{thm:local}
Let $u$ be a solution to \eqref{eq:H} in $B(0,R)\subset \R^2$, $R>2$, $q$ is real-valued, $|q|\le 1$, and 
\begin{equation}
\label{eq:assumptionLMNN}
\frac{\sup_{B(0,R)}|u|}{\sup_{B(0,R/2)}|u|}\le e^N,
\end{equation}
then
\begin{equation}
\label{eq:keyLMNN}
\sup_{B(0,r)}|u|\ge (r/R)^{C(R\log^{1/2}R+N)}\sup_{B(0,R)}|u|
\end{equation}
for any $r<R/4$, with some absolute constant $C>0$.
\end{thm}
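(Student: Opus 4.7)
The plan is to extend the Kenig--Silvestre--Wang scheme of Theorem \ref{thm:KSWlocal} to a sign-changing potential, for which the KSW global positive multiplier is no longer available. Instead, I would try to combine a \emph{local} multiplier construction inside each nodal domain of $u$ with a careful control of how nodal domains are distributed, and then apply Hadamard's three-circle theorem through a quasiconformal change of variables as in \cite{KSW15}, accepting an extra loss that will produce the exponent $R\log^{1/2}R$.

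First I would set up the local replacement of $\phi$. Fix a small disk $B(p,1) \subset B(0,R)$; since $|q|\le 1$, on each such disk one can construct a positive local solution $\phi_p$ of $-\Delta \phi_p + q\phi_p = 0$ with bounded logarithm (as in \eqref{eq:estimacionphi}). Inside any nodal domain $\Omega$ of $u$ that meets $B(p,1)$, the ratio $v = u/\phi_p$ is well defined and satisfies $\nabla\cdot(\phi_p^2 \nabla v)=0$. Introducing the conjugate function $\tilde v$ and $g = \phi_p^2 v + i\tilde v$ gives, exactly as in \eqref{eq:finalcomp}, a Beltrami equation $\bar\partial g = \tilde\alpha\, g$ in $\Omega\cap B(p,1)$ with $\|\tilde\alpha\|_{L^\infty}\le C$. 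Stoïlow factorization then writes $g = h\circ \Phi$ for a quasiconformal map $\Phi$ and a holomorphic $h$, and a Hadamard three-circle inequality for $h$ yields a local three-ball inequality for $u$ of the KSW type with uniform absolute exponent on any ball of radius $\sim 1$ that stays inside a single nodal domain.

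Second, I would iterate this local three-ball inequality from the innermost ball $B(0,r)$ out to $B(0,R)$. The obvious obstruction is crossing the nodal set of $u$. Here the plan is to make small \emph{holes} along a chain of disks connecting $B(0,r)$ to $B(0,R)$, choosing the holes near points where $u$ vanishes so that each perforated region lies in a single nodal component; the Beltrami/Stoïlow argument above applies on each such perforated region provided one works with the capacity-respecting quasiconformal pullback rather than the Euclidean domain. Each chain link contributes an absolute constant in the exponent of \eqref{eq:keyLMNN}, so one needs to count links. The number of links is governed by (a) the number of nodal domains of $u$ that one must traverse, which, since any nodal domain of a solution with $|q|\le 1$ has inradius at least a fixed constant by the Faber--Krahn inequality for the Dirichlet Laplacian, is at most $O(R^2)$ naively, and (b) the doubling information $N$ in \eqref{eq:assumptionLMNN}, which via a Garofalo--Lin / frequency function argument upgrades the control on the complexity of the nodal set in $B(0,R/2)$.

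The main obstacle, and the source of the delicate exponent $C(R\log^{1/2}R + N)$, is this geometric-combinatorial estimate on nodal domains. One has to show that the ``effective'' number of nodal components one is forced to cross, weighted by the quasiconformal distortion accumulated when going around them, is at most $O(R\log^{1/2}R + N)$ rather than the naive $O(R^2 + N)$. I would try to obtain the $R\log^{1/2}R$ factor by an averaging argument over translates of the chain: since any single nodal domain has area $\gtrsim 1$, a generic radial ray from $0$ to $\partial B(0,R)$ meets only $O(R)$ of them, and the $\log^{1/2}R$ factor should come from the additional quasiconformal distortion near the hole boundaries, estimated through the Beurling--Ahlfors distortion theorem. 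Once this counting estimate is in place, concatenating the local three-ball inequalities along the chosen chain and optimizing yields \eqref{eq:keyLMNN}; Theorems \ref{thm:LMNN} and \ref{thm:quantLMNN} would then follow by the Bourgain--Kenig rescaling of Subsection \ref{sub:BKquant}.
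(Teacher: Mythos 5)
Your plan diverges fundamentally from what Logunov--Malinnikova--Nadirashvili--Nazarov actually do, and the difference is precisely the point that makes their argument work.

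The heart of the LMNN proof is not a family of \emph{local} multipliers $\phi_p$ with bounded logarithm. It is the construction of a \emph{single} multiplier $\varphi$ on a cleverly perforated domain
$\Omega = \{z : |z|<R,\ z\notin F_0\cup F_1\}$
(where $F_0$ is the nodal set of $u$ and $F_1$ is a family of $C\varepsilon$-separated disks of radius $\varepsilon$ added so that $F_0\cup F_1\cup\{|z|\ge R\}$ becomes a $3C\varepsilon$-net). Because the punctured domain has Poincar\'e constant $\lesssim\varepsilon^2$, one can solve $\Delta\varphi+q\varphi=0$ in $\Omega$ with $\varphi-1\in W^{1,2}_0(\Omega)$ and --- crucially --- $\|\varphi-1\|_\infty\lesssim\varepsilon^2$. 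It is this closeness of $\varphi$ to $1$ that makes $f=u/\varphi$ satisfy a divergence-form equation whose Beltrami coefficient is $O(\varepsilon^2)$, hence whose Sto\"ilow factor is $K$-quasiconformal with $K\le 1+C\varepsilon^2$. Your local multipliers only have $|\log\phi_p|\lesssim 1$ (the KSW estimate with $M=1$), which yields $K$ bounded away from $1$ by an absolute constant. That is fatal: a $K$-quasiconformal change of variables distorts the ratio of scales $R/r$ to roughly $(R/r)^K$, so when you subsequently invoke the three-circle theorem the exponent degrades by a multiplicative factor $\sim K^{\log(R/r)}$. With $K-1\sim 1$ this factor is $R^{\,c}$ and the bound obtained is far weaker than the target $(r/R)^{CR\log^{1/2}R}$. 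The whole point of the $\varepsilon$-perforation and the choice $\varepsilon\sim 1/\sqrt{\log R}$ is to keep $K^{\log R}\lesssim 1$ while still having the perforated domain be a fine enough net; this balance is exactly what produces the extra $\log^{1/2}R$ (one ends up with a harmonic function on $B(0,R')\setminus\cup_j D_j$ with $R'\sim R/\varepsilon\sim R\sqrt{\log R}$), and you will not see it through Beurling--Ahlfors estimates on a $K$ that is already $O(1)$ away from conformal.

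The second gap is the chaining. Iterating a fixed-scale three-ball inequality with absolute constants across $O(R)$ (let alone $O(R^2)$) links compounds geometrically: if each step replaces the exponent $\theta$ by $\theta^2$, after $m$ steps the accumulated exponent is $\theta^m$, giving a doubly exponential, not a polynomial-in-$r/R$, lower bound. What the Landis threshold requires is a \emph{single} Hadamard three-circle estimate spanning all scales from $r$ to $R$ at once, applied to a holomorphic function $h$ obtained after one global quasiconformal straightening; the sign-preservation of $h$ in a collar around each hole and a dedicated Carleman-type lemma for harmonic functions on punctured domains (the ``toy problem'' in the paper, Theorem \ref{thm:worseLMNN}) then take over. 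Chaining and counting nodal domains crossing a generic ray is not part of the argument at all. As a side remark, your invocation of Faber--Krahn gives an area lower bound on nodal domains, not an inradius lower bound (Hayman's inradius estimate $\lambda_1\gtrsim\rho^{-2}$ requires simple connectedness, which nodal domains need not satisfy), so the claim that each nodal domain has inradius $\gtrsim 1$ is unjustified as stated --- though this is a secondary issue compared to the two structural gaps above.
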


In turn, the proof of Theorem \ref{thm:local} consists of three parts (or \textit{Acts}, as named by the authors). 

\subsection{Ideas in the proof of Theorem \ref{thm:local}}

\noindent \textit{Act I. Construction of a positive multiplier in a suitable perforated domain.} This first step consists of constructing a positive solution (multiplier) of the equation $\Delta \varphi +q\varphi=0$ (alike in \cite{KSW15}) in a certain domain. Such a construction is possible by perforating the domain $B(0,R)$ in a suitable way, so that the Poincar\'e constant is small and explicitly quantified. Then, in the perforated domain, due to this precise control of the (small) Poincar\'e constant, one can construct such a positive solution. 

Let us provide details. Consider the nodal set 
$
F_0=\{x:u(x)=0\}
$
and assume that $u$ is a solution to \eqref{eq:H} in $B(0,R)$, $R>1$. Take $\varepsilon>0$, which is a parameter chosen appropriately at the end of the proof, and add finitely many $C\varepsilon$-separated closed disks of radius $\varepsilon$ to $F_0$ so that the distance from each disk to $F_0$ is greater or equal than $C\varepsilon$. Denote by $F_1$ the union of the closed disks. Then
$
F_0\cup F_1 \cup \{z:|z|\ge R\}
$
is a $3C\varepsilon$-net on the plane, assuming $C>2$.

\begin{figure}
\includegraphics[scale=0.25]{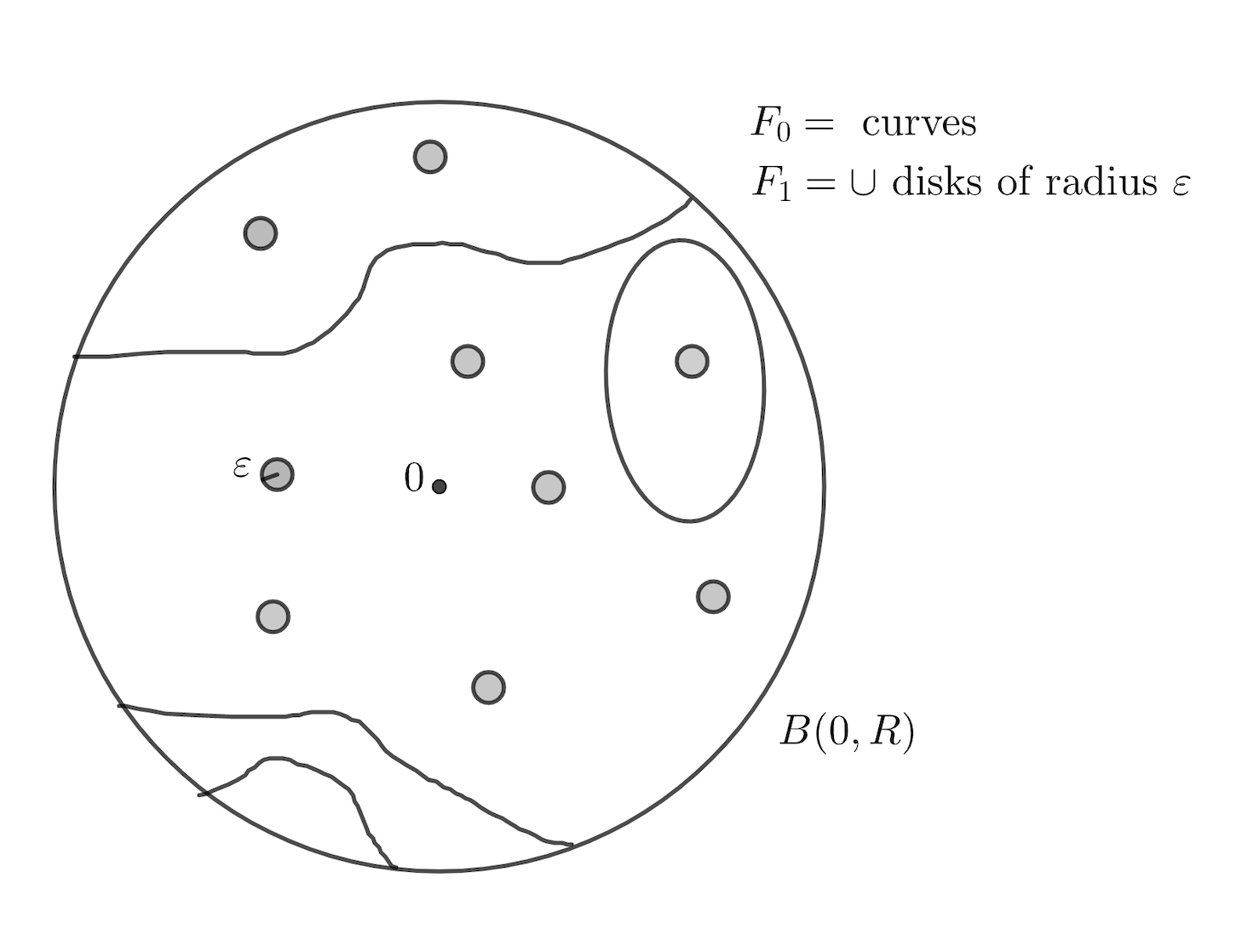}
\caption{The punctured domain $\Omega$}
  \label{fig:punctured}
  \end{figure}
Moreover, it can be shown that the domain $\Omega$, which consists of perforating the domain $B(0,R)$ with the small disks $F_1$ and $F_0$ (see Figure \ref{fig:punctured})
$$
\Omega=\{z:|z|<R, z\notin F_0\cup F_1\}
$$
is an open, possibly disconnected set with Poincar\'e constant controlled by $C'\varepsilon^2$, where $C'$ only depends on $C$, namely for every $g\in W^{1,2}_0(\Omega)$, we have\footnote{Observe that the function $g$ in the inequality is not the solution $u$ to $\Delta u+q u=0$ in $B(0,R)$.}
$$
\int_{\Omega}g^2\le C'\varepsilon^2\int_{\Omega}|\nabla g|^2.
$$
The properties of the nodal set and the perforation process allow one to construct a function (multiplier) $\varphi$ in $B(0,R)$ such that 
$\varphi\in W^{1,2}(\Omega)$ and
$$
\Delta \varphi+q \varphi=0 \text{ in } \Omega,
\qquad \text{ and }\quad \widetilde{\varphi}=\varphi-1\in W^{1,2}_0(\Omega) \quad \text{ satisfies }
\quad \|\widetilde{\varphi}\|_{\infty}\le C''\varepsilon^2,
$$
so that $\varphi$ is a positive function (because $\varepsilon$ is small).

\noindent \textit{Act II. Reduction to a non-homogeneous Beltrami equation.} The second step consists of a reduction of the problem to a divergence type equation in a domain with holes and a quasiconformal mapping with a control of distorsion $K$ in terms of the radius of the ball $B(0,R)$. Thus, first it is proven that, thanks to the positive multiplier of the previous step, $f=\frac{u}{\varphi}$ is a solution to 
$$
\operatorname{div}(\varphi^2\nabla f)=0\qquad \text{ in } \Omega
$$
and $f\in W^{1,2}_{\operatorname{loc}}(B(0,R))$.
It turns out that the equation above holds even through $F_0$ in the whole $B(0,R)\setminus F_1$. The divergence elliptic equation is then transformed into a Beltrami equation, thanks to Stoilow's factorization theorem, through a $K$-quasiconformal change of variables $g$ mapping $0$ to $0$ and $B(0,R)$ into $B(0,R)$ and such that 
$$
f=h\circ g
$$
with $h$ a harmonic function in $B(0,R)\setminus g(F_1)$, and $K$ is close to $1$ when $\|\varphi-1\|_{\infty}$ is small, that is
$$
K\le \frac{1+\Big\|\frac{1-\varphi^2}{1+\varphi^2}\Big\|_{\infty}}{1-\Big\|\frac{1-\varphi^2}{1+\varphi^2}\Big\|_{\infty}}\le 1+C\varepsilon^2.
$$
Then one chooses 
$
\varepsilon\sim \frac{1}{\sqrt{\log R}},
$
so that the distortion on scales from $1/R$ to $R$ is bounded and the images of the disks in $F_1$ have size comparable to $\varepsilon$. We emphasize that a harmonic function $h$ in $B(0,R)\setminus g(F_1)$ is obtained, where $g(F_1)$ is the union of sets of diameter  $\varepsilon$ and each set (the image of a single disk) is surrounded by an annulus of width $C\varepsilon$ in which $h$ \textit{does not change sign}.

 \noindent \textit{Act III. A local reformulation with two dimensional harmonic functions.} After rescaling, we end up with the function $h$, which is harmonic in a punctured domain $B(0,R')\setminus \cup_jD_j$, with $R'\sim \frac{R}{\varepsilon}\sim R\sqrt{\log R}$ and $D_j$ are $1000$-separated unit disks. If we assume also that $h$ does not change sign in $5D_j\setminus D_j$, with the assumption \eqref{eq:assumptionLMNN}, it can be concluded that
 $$
\frac{\sup_{B(0,R')\setminus{\cup_j3D_j}}|h|}{\sup_{\{z: R'/8<|z|<R'\}\setminus{\cup_j3D_j}}|h|}\le \exp[CR'].
 $$
 The goal of Theorem \ref{thm:local} is to estimate $\sup_{B(0,R)}|u|$ from below, and in order to prove the estimate \eqref{eq:keyLMNN}, it is enough to show that 
 $$
 \sup_{B(0,r')\setminus \cup_j3D_j}|h|\ge c(r'/R')^{C(R'+N')}\sup_{B(0,R')\setminus \cup_j3D_j}|h|,
 $$
 where $r'\ge \frac{R'}{16}\big(\frac{r}{R}\big)^2$. Therefore, the proofs of the main theorems have been reduced to prove a local, harmonic counterpart version of the main theorem, given in \cite[Theorem 5.3]{LMNN20}, which is the core of the matter, and where the separation of disks and preservation of sign are crucial. 

  \noindent \textit{Final Act.} We see how Theorem \ref{thm:local} implies Theorems \ref{thm:LMNN} and \ref{thm:quantLMNN}. For Theorem \ref{thm:LMNN}, we can assume that $|u|$ attains its global maximum at some point on the plane, otherwise $|u|$ does not tend to $0$ near infinity. Denoting $|u(z_{\max})|=\max_{\R^2}|u|=1$, for any $R>6|z_{\max}|$ and any $x$ with $|x|=R/3$, we have 
  \begin{equation}
  \label{eq:maxim}
  \sup_{B(x,R)}|u|= \sup_{B(x,R/2)}|u|=1.
  \end{equation}
If additionally $R>2$ then by Theorem \ref{thm:local} applied to $u(\cdot + x)$, we have $ \sup_{B(x,R/4)}|u|\ge e^{-CR\log^{1/2}R}$, thus
  $$
  \sup_{|z|>R/12}|u|\ge \exp[-CR\log^{1/2}R].
  $$
  In order to deduce Theorem \ref{thm:quantLMNN} we again note that \eqref{eq:maxim} holds for any $x$ with $|x|=R/2$, because $|u(0)|=\max_{B(0,2R)}|u|$. Applying Theorem \ref{thm:local} to $u(\cdot+x)$, we get 
  $$
  \sup_{B(x,1)}|u|\ge R^{-CR\log^{1/2}R} =\exp[-CR\log^{3/2}R].
  $$
  which is the thesis in Theorem \ref{thm:quantLMNN}.

A nice summary of the strategy of \cite{LMNN20} can be also found in \cite[Chapter 24]{Hsu22}. Relying heavily on the ideas presented in \cite{LMNN20}, Davey \cite{Davey23} extended Logunov's et al. result, also for $n = 2$ and real potentials, by allowing the potentials to grow at infinity. More precisely, for equations of the form $-\Delta u+qu=0$ in $\R^2$, with $|q(x)|\lesssim |x|^N$ for some $N\ge0$, Davey proves that real-valued solutions satisfy exponential decay estimates with a rate that depends explicitly on $N$. The case $N=0$ corresponds to Landis' conjecture for real-valued solutions in the plane \cite{LMNN20} described in this section. 

Also inspired by the methodology of proof of \cite{LMNN20}, Le Balc'h and Souza \cite{BS24} recently studied a quantitative form of the Landis conjecture on exponential decay for real-valued solutions to second order elliptic equations with variable coefficients in the plane. They follow the steps in \cite{LMNN20} in a rather neat way. In particular, they prove a qualitative form of Landis' conjecture  for real-valued weak solutions to $-\Delta u-\nabla\cdot (W_1 u)+W_2\cdot \nabla u+q u=0$ in $\R^2$, where $W_i\in L^{\infty}(\R^2;\R^2)$, $i=1,2$ are drifts and $q\in L^{\infty}(\R^2;\R)$: if $|u(x)|\le \exp[-|x|^{1+\delta}]$ for $\delta>0$, $x\in \R^2$, then $u\equiv 0$. The presence of drifts forces twists in Le Balc'h and Souza's strategy comparing to Logunov et al.'s strategy, and their last step relies on a Carleman inequality for the Laplacian on perforated domains rather than working with a simple harmonic equation, which is not possible to reach in this situation. A version with potentials $q\in L^p(\R^2;\R)$ and $p\in (,+\infty]$ is proven in \cite{LB24}.

The techniques developed by Logunov et al. seem to be difficult to extend in higher dimensions to get the sharp results towards  Landis' conjecture. Nevertheless, it is interesting to note that they also present a result for harmonic functions in higher dimensions, based on a Carleman estimate, with a slightly worse bound than the one in the complex plane \cite[Theorem 5.1 (Toy problem)]{LMNN20}. 

\begin{thm}{\cite[Toy problem in higher dimensions with extra logarithm]{LMNN20}}
\label{thm:worseLMNN}
    Let $\{D_j\}$ be a collection of $100$-separated balls on $\R^n$ and let $R>100$. Let $u$ be a harmonic function in $B_R\cup 5D_j\setminus D_j$ such that $u$ does not change sign in each $B_R\cap 5D_j\setminus D_j$.
    If $|u(z)|\le \exp[-L|z|\log|z|]$ for all $z\in B_R\cup 5D_j\setminus D_j$ and $L$ is sufficiently large, then $u\equiv 0$.
\end{thm}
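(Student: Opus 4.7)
The plan is to run a Carleman estimate for the Laplacian with a radial weight tailored to the decay rate $\exp[-L|x|\log|x|]$, and to exploit the non-sign-changing hypothesis on each annulus $5D_j\setminus D_j$ precisely to control the boundary contributions arising when the domain is cut off near each hole $D_j$.

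First I would establish a Carleman-type inequality of the form
\begin{equation*}
\tau \int e^{2\tau \phi(|x|)} |f|^2 \le C \int e^{2\tau \phi(|x|)} |\Delta f|^2,
\end{equation*}
valid for $f\in C_c^\infty(\{1<|x|<R\})$ and $\tau\ge \tau_0$, with a radial weight $\phi(r)\sim r\log r$. This weight satisfies the pseudoconvexity conditions $\phi'(r)=\log r+1>0$ and $(r\phi'(r))'>0$ for $r$ sufficiently large, which are the ingredients needed for such an inequality via the standard conjugation-of-the-Laplacian argument (in the spirit of \cite{H83}, or of the proof sketched for Lemma \ref{lem:aingeru}). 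I would then apply the estimate to $f=\chi u$, where $\chi$ is a smooth cutoff equal to one outside $3D_j$ and vanishing on $2D_j$ for each $j$, with an analogous cutoff near $\partial B_R$. Since $\Delta u=0$ in the punctured domain, the error $\Delta(\chi u)=2\nabla \chi\cdot \nabla u+u\Delta \chi$ is supported only in the thin collars $3D_j\setminus 2D_j$ and in an outer annulus $B_R\setminus B_{R-1}$.

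The outer annulus contribution is controlled directly by the decay hypothesis and standard interior gradient estimates for harmonic functions. The contribution from each collar around $D_j$ is exactly where the non-sign-changing hypothesis enters: since $u$ has constant sign on $5D_j\setminus D_j$, Harnack's inequality applied to $u$ or $-u$ (nonnegative and harmonic on that annulus) propagates the pointwise bound $|u|\le \exp[-L|c_j|\log|c_j|]$ (available on the outer rim $\partial(5D_j)$ by hypothesis, with $c_j$ the center of $D_j$) across the smaller collar $3D_j\setminus 2D_j$, and standard gradient estimates for harmonic functions transfer the same bound to $|\nabla u|$. Inserting these into the right-hand side of the Carleman inequality and balancing the Carleman weight $e^{2\tau \phi}$ against the exponential decay of $u$, one obtains an estimate of the form $\sum_j \exp[2\tau\phi(|c_j|)-2L|c_j|\log|c_j|]$ for the error. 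For $L$ large compared to $\tau$ and compared to the dimension-dependent growth $O(r^n)$ of the number of $100$-separated balls at radius $r$, this sum is small enough that the Carleman inequality forces $u\equiv 0$ on the bulk of the domain, whence standard unique continuation propagates $u\equiv 0$ to the full region.

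The main obstacle, as is typical for Carleman methods on perforated domains, is the coordination between the weight $\phi$ and the geometry of the punctures. The weight $\phi(r)=r\log r$ must be strong enough that the weighted decay overcomes the counting of holes at each scale, it must satisfy pseudoconvexity for the inequality to be valid, and it must be compatible with the global integration over the whole perforated domain (not merely in the vicinity of a single $D_j$). The extra logarithm in the statement, compared with the cleaner $r$-rate of Theorem \ref{thm:local} in dimension two, is precisely the price paid for these conflicting requirements once Beltrami and quasiconformal techniques are no longer available in $\R^n$ with $n\ge 3$.
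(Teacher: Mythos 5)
Your high-level strategy — a Carleman inequality with a log-linear radial weight $\phi(r)\sim r\log r$ applied to cutoffs of $u$ — is exactly what the survey attributes to the LMNN proof, so in that sense you have found the right tool. But the step where you claim "the non-sign-changing hypothesis enters" does not hold water. Since the decay hypothesis $|u(z)|\le\exp[-L|z|\log|z|]$ holds \emph{on the entire perforated domain} (including every collar $3D_j\setminus 2D_j$), you do not need Harnack to "propagate" a pointwise bound from $\partial(5D_j)$ into the collar: it is already there. Moreover, for a harmonic function on the punctured domain, the interior gradient estimate $|\nabla u(x)|\lesssim\sup_{B(x,c)}|u|$ controls $|\nabla u|$ in the collar without any sign assumption, because the collar around a unit-sized, $100$-separated ball has room to fit such a sub-ball inside the domain. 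So as written, your argument never makes essential use of the non-sign-changing hypothesis. This is a genuine red flag: either the proof is silently proving a stronger statement without that hypothesis (which would require justification), or — far more likely — the sign condition enters somewhere you have not identified, e.g.\ in a maximum-principle filling of the holes or in the construction of a test function that replaces the cutoff collars, and that missing mechanism is where the real work lies.

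A second gap is in the closing step. You write that the Carleman inequality "forces $u\equiv 0$," but the decay hypothesis here is $|u(z)|\le\exp[-L|z|\log|z|]$ for a single fixed (large) $L$, not for all $\tau>0$ as in Meshkov's Theorem~\ref{thm:Me91}. Consequently the weighted integrals $\int e^{2\tau r\log r}|u|^2$ are finite only for $\tau<L$, and one cannot freely send $\tau\to\infty$. For a bounded Carleman parameter, the inequality yields only a quantitative smallness estimate of the form $\int_{\text{shell}}|u|^2\lesssim e^{-cL}$, which does not by itself give $u\equiv0$; an additional iteration, overspill, or propagation-of-smallness step is needed and is not sketched. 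Relatedly, for $\phi(r)=r\log r$ the commutator in the standard conjugation argument produces a factor $\tau^3(\log r+1)^2/r$ on the left-hand side, not a clean $\tau$; this degenerating weight in $r$ is precisely the obstruction that makes the log-linear weight "exceptionally relaxed" (as the survey emphasizes), and it must be tracked carefully in the hole-counting step rather than swept under the rug.
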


The proof of Theorem \ref{thm:worseLMNN} is based on a Carleman inequality with log linear weight, which is an exceptionally relaxed assumption, since most of Carleman's inequalities require strict log convexity properties. Methodology in \cite{LMNN20} has inspired observability estimates for non-homogeneous elliptic equations in the presence of a potential, posed on a smooth bounded domain $\Omega$ in $\R^2$ and observed from a non-empty open subset $\omega\subset \Omega$, see \cite{EB23}. The main difference between \cite{EB23} and \cite{LMNN20} is that the zero set of solutions to elliptic equations with source terms can be very intricate and must be handled more carefully. Technical improvement of the Carleman's estimate (actually, additional treatment via Harnack inequalities) in \cite{EB23} enables to generalize the toy problem in Theorem \ref{thm:worseLMNN} without logarithm loss.

\section{Anderson--Bernoulli model and Landis' conjecture on graphs}
\label{sec:AB_discreto}

\subsection{Anderson--Bernoulli model in the two and three dimensional lattices}
\label{subsec:Ding-Smart}

It was mentioned in Section \ref{BK} that the case of the lattice version of Anderson--Bernoulli model (i.e., the original model considered by Anderson with Bernoulli potential) for $n\ge 2$ remained unsettled at the time of writing \cite{BK05}. About 15 years later, J. Ding and C. K. Smart, in their breakthrough work in \cite{DS20}, considered the 
random Schr\"odinger operator on $\ell^2(\Z^2)$ given in \eqref{eq:Hd} with $q$ being a random Bernoulli potential, and which we will denote in this subsection by $
H_{\operatorname{2D\ell}}$.

The spectrum of the discrete Laplacian is the closed interval $\sigma(-\Delta_{\operatorname{d}})=[0,4n]$ and the spectrum of the 
random Hamiltonian is almost surely the closed interval $\sigma(H_{\operatorname{2D\ell}})=[0,4n+\lambda]$.
While the spectrum of the discrete Laplacian is absolutely continuous, the perturbation $\lambda q$ can create eigenvalues. Moreover, the perturbation may lead to eigenfunctions that are exponentially localized within an interval in $\sigma(H_{\operatorname{2D\ell}})$, in which case $H_{\operatorname{2D\ell}}$ is said to have Anderson localization.  In short, we say that $H_{\operatorname{2D\ell}}$ has ``Anderson localization'' in the spectral interval $I\subseteq\sigma(H_{\operatorname{2D\ell}})$ if $
\psi: \Z^n\to \R, \lambda\in I,  H_{\operatorname{2D\ell}}\psi=\lambda\psi$, and 
$$
\inf_{p>0}\sup_{x\in \Z^n}(1+|x|)^{-p}|\psi(x)|<\infty\quad
\text{ implies }\quad 
\inf_{p>0}\sup_{x\in \Z^n}e^{p|x|}|\psi(x)|<\infty.
$$
Ding and Smart proved that on a two-dimensional lattice this is indeed the case.

\begin{theorem}{\cite[Theorem 1.1]{DS20}}
\label{thm:DS}
In dimension $n=2$ there is an $\varepsilon>0$, depending on $\lambda>0$, such that, almost surely, $H_{\operatorname{2D\ell}}$ has Anderson localization in $[0,\varepsilon]$.
\end{theorem}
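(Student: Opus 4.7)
The plan is to transpose the Bourgain--Kenig multi-scale analysis scheme of Section \ref{BK} to the lattice, with the crucial substitution of a \emph{discrete, probabilistic quantitative unique continuation} in place of the Carleman-based Theorem \ref{thm:BKquantitative}. Concretely, for a box $\Lambda_\ell = [-\ell,\ell]^2 \cap \Z^2$, we consider the Dirichlet restriction $H_{\operatorname{2D}\ell}^{\Lambda_\ell}$ and seek to establish two standing inductive statements at each scale $\ell$: a \textbf{Wegner-type estimate} (the probability that $H_{\operatorname{2D}\ell}^{\Lambda_\ell}$ has an eigenvalue within $e^{-\ell^{\alpha}}$ of a given $E \in [0,\varepsilon]$ is at most $\ell^{-p}$) and an \textbf{eigenfunction decay / resolvent estimate} (with high probability, the finite-volume Green's function decays sub-exponentially between well-separated regions). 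The induction on scales $\ell \mapsto \ell^{\gamma}$ is driven by standard resolvent identities, so the conceptual work is concentrated in producing the Wegner estimate at each scale.

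First I would set up the eigenvalue variation identity: if $\psi_\Lambda$ is a normalized Dirichlet eigenfunction with eigenvalue $E_\Lambda(\varepsilon)$, then the $j$th Bernoulli influence satisfies $I_j = E_\Lambda|_{\varepsilon_j=1} - E_\Lambda|_{\varepsilon_j=0} \approx \lambda |\psi_\Lambda(j)|^2$. Lemma \ref{lem:BK} then reduces the Wegner estimate to a \emph{lower bound on the number of sites $j$ at which $|\psi_\Lambda(j)|$ is not too small}. In other words, one needs a statement of the form ``an eigenfunction of $H_{\operatorname{2D}\ell}^{\Lambda}$ cannot concentrate on too small a set,'' which is precisely a discrete unique continuation principle. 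In the continuum this comes from Lemma \ref{lem:aingeru} via the scaling argument of Subsection \ref{sub:BKquant}; on the lattice such an $\ell^2$ estimate is known to fail in general (see the Jitomirskaya counterexample mentioned in the text), so we must replace it by a statement that holds only with high probability over the Bernoulli disorder.

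The core of the proof, and where I expect the main obstacle to lie, is the following \emph{two-dimensional} discrete unique continuation: conditioned on a good event of probability $1 - \ell^{-p}$, any solution $\psi$ of $H_{\operatorname{2D}\ell}^{\Lambda_\ell} \psi = E \psi$ with $\|\psi\|_{\ell^2(\Lambda_\ell)} = 1$ satisfies
\[
  \#\{ x \in \Lambda_\ell : |\psi(x)| \geq e^{-C\ell} \} \geq \ell^{2-\beta}
\]
for some $\beta < 1$. I would approach this by exploiting the rigidity of the five-point discrete Laplacian: the recursion $\psi(x+e_1) = (4+\lambda q(x) - E)\psi(x) - \psi(x-e_1) - \psi(x+e_2) - \psi(x-e_2)$ lets one propagate the values of $\psi$ along rows and columns, so that knowing $\psi$ on two adjacent columns determines $\psi$ everywhere. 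The Bernoulli randomness in $q$ is then used to rule out the pathological configurations in which the values conspire to produce rapid decay. This is the step that fundamentally exploits the two-dimensionality, through a planar combinatorial/geometric argument comparing how fast $\psi$ can cascade to zero across a $2$D box versus the entropy of Bernoulli sequences. Once this estimate is in hand, Lemma \ref{lem:BK} provides the Wegner estimate, and the scales bootstrap.

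With the Wegner estimate and the inductive resolvent decay at every scale, one extracts Anderson localization by a now-standard argument: the sub-exponential decay of Green's functions on cubes of side $\ell$, together with a Borel--Cantelli argument over a sequence of doubling scales, forces every generalized eigenfunction in $[0,\varepsilon]$ with at most polynomial growth to in fact decay exponentially. Combined with the Schnol--Simon spectral-theorem principle (almost every spectral value admits a polynomially bounded generalized eigenfunction), this yields pure point spectrum with exponentially localized eigenfunctions on $[0,\varepsilon]$ almost surely, which is the desired conclusion.
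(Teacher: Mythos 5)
Your overall architecture matches what Ding--Smart actually do: a Bourgain--Kenig style multi-scale analysis on the lattice in which the Carleman-based quantitative unique continuation of Theorem~\ref{thm:BKquantitative} is replaced by a probabilistic, two-dimensional unique continuation estimate, fed through the Bernoulli eigenvalue--variation/influence computation and Lemma~\ref{lem:BK} to extract a Wegner-type bound at each scale, which then bootstraps to localization via Schnol--Simon. You also correctly locate the heart of the matter in a high-probability lower bound on the size of the support of lattice eigenfunctions, and correctly attribute the two-dimensionality of the argument to a planar propagation/combinatorial mechanism --- this is indeed in the spirit of the harmonic-function result of Buhovsky--Logunov--Malinnikova--Sodin (Theorem~\ref{thm:BLMS22}).

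There are, however, two concrete places where the proposal as written would not close. First, the Wegner step is not just a matter of ``conditioning on a good event'': because the eigenvalue-variation lemma consumes the very Bernoulli variables one needs to vary, Ding--Smart introduce a decomposition of $\Z^2$ into \emph{frozen} sites (already sampled and conditioned on) and \emph{free} sites reserved for the variation on rare bad boxes; without this bookkeeping the influence bound and the Wegner estimate cannot be run simultaneously, and the induction does not advance. Second, your proposed support lower bound $\#\{x\in\Lambda_\ell: |\psi(x)|\ge e^{-C\ell}\}\ge \ell^{2-\beta}$ for ``some $\beta<1$'' is too weak as stated: Ding--Smart need the sharper form $\varepsilon L^{3/2}(\log L)^{-1/2}$ (Theorem~\ref{thm:DS20}), i.e.\ $\beta$ essentially equal to $1/2$, and this exponent is not achieved by naive row/column propagation of the five-point stencil. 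The mechanism in \cite{DS20} instead works on very thin $45^\circ$-tilted rectangles, where a boundary-to-interior propagation inequality in the style of \cite[Lemma 3.4]{BLMS22} (tied to Remez-type polynomial bounds in the harmonic case) must in the presence of the potential be replaced by treating the solution map as a \emph{random linear operator} and controlling it by a large-deviations estimate over the Bernoulli disorder; your phrase about ruling out ``pathological configurations'' gestures at this but does not supply the quantitative large-deviations input that actually produces the $L^{3/2}$ threshold.
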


The proof of this result follows the approach of Bourgain and Kenig \cite{BK05} which we superficially described in Section \ref{BK}, with some significant modifications. Ding and Smart performed a multiscale analysis, keeping track of a list of ``frozen'' sites $F\subseteq \Z^2$ where the potential has been already sampled. The complementary ``free'' sites $\Z^2\setminus F$ are sampled only to perform an eigenfunction variation on rare, ``bad'' squares. This strategy of frozen and free sites is used to obtain a version of the Wegner \cite{W81} estimate that is otherwise unavailable in the Bernoulli setting. 

We recall that the eigenvalue variation of Bourgain--Kenig \cite{BK05} relies crucially on an a priori quantitative unique continuation result, Theorem \ref{thm:BKquantitative}. We also recall that the corresponding fact is false on the lattice, so one needs a substitute for the quantitative unique continuation result for eigenfunctions of $H_{\operatorname{2D\ell}}$. For the two-dimensional lattice $\Z^2$, a hint of the missing ingredient appears in the work by L. Buhovsky, Logunov, Malinnikova, and M. A. Sodin \cite{BLMS22}. In \cite{BLMS22}, it is proved that any function $u:\Z^2\to \R$ that is harmonic and bounded on a $1-\varepsilon$ fraction of sites must be constant. One of the key components of this Liouville theorem is a quantitative unique continuation result for harmonic functions on the two dimensional lattice, which can be formulated as follows. 

\begin{thm}{\cite{BLMS22}}
\label{thm:BLMS22}
    There are constants $\alpha>1>\varepsilon>0$ such that, if $u:\Z^2\to \R$ is a lattice harmonic function in a square $Q\subseteq \Z^2$ of side length $L\ge \alpha$, then
    $$
    |\{x\in Q: |u(x)|\ge e^{-\alpha L}\|u\|_{\ell^{\infty}(\frac12Q)}\}|\ge \varepsilon L^2.
    $$
\end{thm}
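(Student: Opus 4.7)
My plan is to argue by contradiction, combining a crude a priori growth bound for lattice harmonic functions with a quantitative \emph{propagation of smallness} tailored to dimension two. After rescaling, I would normalize so that $\|u\|_{\ell^\infty(\frac{1}{2}Q)} = 1$, and suppose toward contradiction that the set $E = \{x \in Q : |u(x)| \ge e^{-\alpha L}\}$ satisfies $|E| < \varepsilon L^2$. The objective is then to deduce that $u$ must already be uniformly tiny on all of $\frac{1}{2}Q$, in violation of the normalization.

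The first ingredient I would establish is an a priori growth bound. The lattice harmonic identity can be rewritten as a linear two-step recurrence that reconstructs $u$ on row $y+1$ from the data on rows $y$ and $y-1$; iterating this $\sim L$ times starting from the two central rows yields a crude estimate $\|u\|_{\ell^\infty(Q)} \le e^{C_0 L}$ for an absolute $C_0$. The same recurrence additionally exhibits the restriction of $u$ to any horizontal slab as a polynomial-type object of algebraic degree $\sim L$ in suitable discrete coordinates, which is the algebraic input for what follows.

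The heart of the argument is the propagation-of-smallness lemma: there exist absolute constants $C_1>0$ and $\varepsilon_0\in(0,1)$ such that, for every lattice harmonic $v$ on $Q$ and every $\delta>0$,
$$|\{x \in Q : |v(x)| < \delta\}| \ge (1 - \varepsilon_0) L^2 \quad\Longrightarrow\quad \|v\|_{\ell^\infty(\frac{1}{2}Q)} \le \delta \cdot e^{C_1 L}.$$
To prove this I would exploit the polynomial structure from the previous step: after a discrete Fourier / generating-function transform, the values of $v$ on horizontal slabs can be encoded by objects of polynomial type, and a Remez / Tur\'an-style inequality then upgrades smallness on a positive-density subset of the lattice to smallness on all of $\frac{1}{2}Q$, at the cost of the admissible $e^{C_1 L}$ factor. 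Applying this to $u$ with $\delta = e^{-\alpha L}$ (permissible once $\varepsilon \le \varepsilon_0$) yields $\|u\|_{\ell^\infty(\frac{1}{2}Q)} \le e^{(C_1-\alpha)L}$; choosing $\alpha > C_1 + 1$ and $L \ge \alpha$ produces $\|u\|_{\ell^\infty(\frac{1}{2}Q)} < 1$, the desired contradiction.

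The main obstacle is that middle step. In the continuous setting one would invoke a Carleman estimate or a three-balls inequality, but, as stressed in Section \ref{BK}, no Carleman inequality on the lattice with a good enough weight is available. The genuinely new input of Buhovsky--Logunov--Malinnikova--Sodin is to extract a Remez-type inequality from the algebraic structure of two-dimensional lattice harmonic functions, and it is precisely this structural richness that fails in higher dimensions, consistently with the known failure of the analogous statement on $\Z^n$ for $n \ge 3$.
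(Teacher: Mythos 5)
Your high-level reading of the proof is aligned with what is known about \cite{BLMS22}: the two-dimensional specificity comes from an algebraic (polynomial-type) structure of lattice harmonic functions to which a Remez/Tur\'an inequality can be applied, and this is exactly what fails in $\Z^n$ for $n\ge 3$. However, the middle step --- the ``propagation of smallness'' lemma --- is where your sketch has a genuine gap. As stated, that lemma is essentially the theorem itself in contrapositive form (with the harmless $e^{C_0L}$ growth bound factored off), so invoking it does not reduce the problem; and your route to it (iterate the row-by-row recurrence $u(\cdot,y+1)\leftarrow u(\cdot,y),u(\cdot,y-1)$, Fourier/generating function in $x$, then Remez) does not obviously produce objects to which Remez applies. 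The actual mechanism in \cite{BLMS22}, as summarized in this survey, hinges on a $45^{\circ}$ rotation of coordinates: in the tilted frame the harmonicity relation becomes a one-sided (``triangular'') propagation on tilted rectangles, and the key technical input \cite[Lemma~3.4]{BLMS22} shows that if $u$ is harmonic on a thin tilted rectangle, bounded on its entire northwest edge and on \emph{half} of its southeast edge, then $u$ is bounded on the whole rectangle. It is in that tilted, one-sided geometry that the restriction of $u$ is related to a polynomial of controlled degree and Remez applies; the untilted horizontal slab recurrence does not have the same structure.

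A second ingredient your outline omits is the covering argument. Knowing that smallness on an entire (half a) boundary edge of a tilted rectangle propagates inward is not enough by itself: the ``bad'' set $E^c=\{|u|<e^{-\alpha L}\}$ a priori sits in general position, not along an edge. \cite{BLMS22} bridges this with a growth lemma on thin tilted rectangles (\cite[Lemma~3.6]{BLMS22}) combined with a Calder\'on--Zygmund-type stopping-time covering of the small set (\cite[Proposition~3.9]{BLMS22}), which is what upgrades ``large density of small values'' to ``smallness on a full half-edge of suitably chosen tilted rectangles'' and hence to the uniform conclusion in $\frac12 Q$. Without the tilted-rectangle boundary lemma and the stopping-time decomposition, the contradiction you set up at the start has no engine to run on. So: right philosophy, correct identification of the Remez input and the dimension restriction, but the geometric reduction (rotation, one-sided propagation) and the covering step are both missing, and it is precisely those two steps that carry the proof.
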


Theorem \ref{thm:BLMS22} implies that any two lattice harmonic functions that agree on a $1-\varepsilon$ fraction of sites in a large square must be equal in the concentric half square. This result is false in dimensions three and higher. Inspired by Theorem~\ref{thm:BLMS22} and its proof, one of the main contributions of Ding and Smart is the following random quantitative unique continuation result for eigenvalues of the Hamiltonian $H_{\operatorname{2D\ell}}$.

\begin{thm}{\cite[Theorem 1.6]{DS20}}
\label{thm:DS20}
    There are constants $\alpha>1>\varepsilon>0$ such that, if $\bar{\lambda}\in [0,9]$ is an energy and  $Q\subseteq \Z^2$ is a square of side length $L\ge \alpha$, then 
    $\mathbb{P}[\mathcal{E}]\ge 1-e^{-\varepsilon L^{1/4}}$, where $\mathcal{E}$ denotes the event that
    $$
    |\{x\in Q: |\psi(x)|\ge e^{-\alpha L\log L}\|\psi\|_{\ell^{\infty}(\frac12Q)}\}|\ge \varepsilon L^{3/2}(\log L)^{-1/2}
    $$
    holds whenever $\lambda\in \R$, $\psi:\Z^2\to \R$, $|\lambda-\bar{\lambda}|\le e^{-\alpha(L\log L)^{1/2}}$, and $H_{\operatorname{2D\ell}}\psi=\lambda \psi$ in $Q$.
\end{thm}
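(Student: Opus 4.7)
The plan is to adapt the deterministic quantitative unique continuation for lattice harmonic functions (Theorem~\ref{thm:BLMS22}) to approximate eigenfunctions of the random Hamiltonian, using the Bernoulli randomness to neutralize the potential in an averaged sense. Rearranging $H_{\operatorname{2D\ell}}\psi = \lambda\psi$ as $-\Delta_{\operatorname{d}}\psi = (\bar\lambda - \lambda q)\psi + (\lambda-\bar\lambda)\psi$, the function $\psi$ is ``discretely harmonic up to a bounded potential'', so on any scale $\ell$ on which one can absorb the exponential amplification $e^{O(\ell)}$ induced by the potential, a BLMS-type estimate should transfer. The thresholds $e^{-\alpha(L\log L)^{1/2}}$ and $e^{-\alpha L\log L}$ appearing in the statement are tuned precisely to absorb this amplification across the $\asymp L/\ell$ composed scales that arise when one chooses $\ell \asymp L^{1/2}(\log L)^{-1/2}$.

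Concretely, I would tile $\tfrac12 Q$ by disjoint sub-squares $Q_i$ of side $\ell$. The first, deterministic step is to compare $\psi$ on each $Q_i$ with the genuine lattice harmonic function $h_i$ agreeing with $\psi$ on $\partial Q_i$; the difference solves a discrete Poisson equation with right-hand side $O(\ell^2)|\psi|$ in $\ell^\infty$, controllable via the lattice Green function. Provided $h_i$ and $\psi$ are comparable in norm on $Q_i$, Theorem~\ref{thm:BLMS22} applied to $h_i$ yields $\gtrsim \varepsilon \ell^2$ sites in $Q_i$ at which $|\psi| \ge e^{-\alpha\ell}\|\psi\|_{\ell^\infty(\frac12 Q_i)}$. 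The second, probabilistic step is to show that with probability $\ge 1-e^{-\varepsilon L^{1/4}}$ a sufficient number of sub-squares are \emph{good}, meaning that the above comparison holds and the local $\ell^\infty$-norm on $Q_i$ is not much smaller than the global one. For this I would import the Bourgain--Kenig combinatorics sketched around Lemma~\ref{lem:BK}: designate a sparse collection of sites per sub-square as \emph{free} (to be resampled) and the rest as \emph{frozen}, and use first-order eigenvalue variation $\partial_{q_j}E = |\psi(j)|^2$ to obtain a Wegner-type bound on the probability that $\bar\lambda$ lies within $e^{-\alpha(L\log L)^{1/2}}$ of a degenerate realization. Optimising the number of free sites (roughly $L^{1/2}$) against the required resolution is what produces the probability deficit $e^{-\varepsilon L^{1/4}}$. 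Summing the local BLMS conclusions over the $\asymp L/\ell$ good sub-squares that inherit the global $\ell^\infty$-norm then yields the claimed cardinality $\varepsilon L^{3/2}(\log L)^{-1/2}$.

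The main obstacle I expect is the circularity in the probabilistic step: Lemma~\ref{lem:BK} requires effective \emph{lower} bounds on the influences $|\psi(j)|^2$ at the free sites, yet such lower bounds on the very eigenfunction we are trying to control are precisely what the theorem is trying to establish. The standard remedy is a bootstrapping / induction-on-scales scheme in which a crude qualitative unique continuation (some site is not too small) is upgraded to the quantitative statement through the multi-scale tiling, with the free sites chosen adaptively in view of the output of the previous scale. The delicate accounting needed to align all error terms --- so that the $e^{-\alpha(L\log L)^{1/2}}$-window fits inside the BLMS threshold $e^{-\alpha\ell}$ at each scale, and so that the Wegner deficit survives summation over $\asymp L^2/\ell^2$ sub-squares --- is what ultimately forces the exponents $L\log L$, $(L\log L)^{1/2}$, $L^{3/2}(\log L)^{-1/2}$, and $L^{1/4}$ of the statement, and carrying out this optimisation rigorously is the technically hardest part of the argument.
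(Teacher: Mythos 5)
Your first, ``deterministic'' step---replacing $\psi$ on a sub-square $Q_i$ of side $\ell$ by the lattice harmonic function $h_i$ with the same boundary values---does not work at any scale that helps. Writing $\Delta_{\operatorname{d}}\psi=(\lambda q-\lambda)\psi$, the difference $\psi-h_i$ solves a discrete Dirichlet problem on $Q_i$ with right-hand side of size $O(1)\cdot|\psi|$ pointwise (the potential is a bona fide order-one term, not a perturbation), so after convolving with the lattice Green function on $Q_i$ you get $\|\psi-h_i\|_{\ell^\infty(Q_i)}=O(\ell^2)\,\|\psi\|_{\ell^\infty(Q_i)}$. This is only $\ll\|\psi\|_{\ell^\infty}$ for $\ell=O(1)$, so the choice $\ell\asymp L^{1/2}(\log L)^{-1/2}$ makes the comparison vacuous, and there is no way to apply Theorem~\ref{thm:BLMS22} to $h_i$ and carry the conclusion back to $\psi$. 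You are also conflating two separate ingredients of \cite{DS20}: the first-order eigenvalue variation plus Lemma~\ref{lem:BK} argument belongs to the Wegner estimate stage of the multiscale analysis, not to the proof of the unique continuation Theorem~\ref{thm:DS20}; there is no circularity of the kind you describe because the unique continuation bound is proved independently and then fed into the Wegner estimate.

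What Ding and Smart actually do is not a harmonic approximation scheme at all. They repeat the \emph{structure} of the BLMS proof---propagation from the boundary to the interior of very thin $45^\circ$-tilted rectangles, a growth lemma, and a Calder\'on--Zygmund stopping-time argument---but confront the presence of the potential head-on: on a tilted rectangle of width one the solution map that extends boundary values into the interior is a deterministic linear operator for harmonic functions (controlled via a Remez/polynomial argument in \cite[Lemma~3.4]{BLMS22}), whereas for $H_{\operatorname{2D\ell}}\psi=\lambda\psi$ it becomes a \emph{random} linear operator in the Bernoulli potential, and the key new idea is a large deviations estimate for this random operator. This is also where the probabilistic deficit $e^{-\varepsilon L^{1/4}}$ enters, rather than from a Wegner-type count over sub-squares. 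With that a priori propagation bound in hand, a growth lemma analogous to \cite[Lemma~3.6]{BLMS22} produces the $\varepsilon L^{3/2}(\log L)^{-1/2}$ support, and a random version of \cite[Proposition~3.9]{BLMS22} finishes the proof. Your outline, by contrast, relies on a comparison to genuine harmonic functions that cannot be made quantitative at the needed scales, so it is a genuinely different route and it has a gap that I don't see how to close.
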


Roughly, Theorem \ref{thm:DS20} says that, with high probability, every eigenfunction on a square $Q$ with side length $L$ is supported on at least $L^{3/2-\varepsilon}$ many points (sites) in $Q$. This weaker variant of unique continuation, which generalises Theorem \ref{thm:BLMS22}, turns out to be still enough for multiscale analysis. An important ingredient in \cite{BLMS22} is a technical lemma \cite[Lemma 3.4]{BLMS22} which gives a priori bounds on how information propagates from the boundary to the interior of a tilted rectangle, and it is used in the upper bound in Liouville's theorem for harmonic functions in the lattice. More precisely, if $u$ is harmonic in certain $45^{\circ}$-rotated rectangle, bounded on the northwest boundary, and bounded on half of the southeast boundary, then $u$ is bounded on the whole rectangle. The main idea of the proof of \cite[Lemma 3.4]{BLMS22} is that such a function $u$ can be related to a polynomial for which Remez' inequality can be applied, yielding the desired upper estimate. This strategy seems to break in the presence of a potential, and the twist in the argument by Ding and Smart is that a solution map related to $H_{\operatorname{2D\ell}}\psi=\lambda \psi$ on a very thin tilted rectangle has to be treated as a random linear operator with a large deviations estimate. With the main ingredient at hand, the next step (alike in \cite[Lemma~3.6]{BLMS22}) is a growth lemma \cite[Lemma 3.18]{DS20}, where the use of very thin rectangles leads to large support on only $L^{3/2-\varepsilon}$ many points, and which permits to run a Calder\'on--Zygmund stopping time argument \cite[Theorem 3.5]{DS20}. The latter is a random version of \cite[Proposition 3.9]{BLMS22}. 

The overall proof strategy by Ding and Smart was later used by L. Li \cite{Li22} on the 2D square lattice to prove that Anderson localization holds at large disorder outside of a finite number of intervals that shrink as the disorder strength grows: Bourgain--Kenig's multiscale argument for the 2D continuum Anderson--Bernoulli model is adapted to the discrete setting and again a critical role is played by the discrete quantitative unique continuation type result of Buhovsky, Logunov, Malinnikova and Sodin.

Recently, Li and L. Zhang \cite{LZ22} have proved a deterministic unique continuation result on three dimensions that is a sufficient substitute for Theorem \ref{thm:DS20} to prove
localization of eigenfunctions corresponding to eigenvalues near zero, the lower boundary of the
spectrum, on the 3D lattice $\Z^3$. For $r\in \R_+$, we will call the set $Q_r:=([-r,r]\cap \Z)^3$ a cube. A simplified statement of such is stated as follows. 
\begin{theorem}{\cite[Theorem 1.3]{LZ22}}
\label{thm:LZ22simp}
    There exists a constant $p>3/2$ such that the following holds. For each $K>0$, there is $C_1>0$, such that for any large enough $n\in \Z_+$ and functions $u,q:\Z^3\to \R$ with
    $\Delta_{\operatorname{d}}u=qu$ in $Q_n$ and $\|q\|_{\infty}\le K$, we have that
    $$
    |\{a\in Q_n: |u(a)|\ge \exp[-C_1 n]|u(0)|\}|\ge n^p.
    $$
\end{theorem}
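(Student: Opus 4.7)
\smallskip

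My plan is to adapt the strategy of Buhovsky--Logunov--Malinnikova--Sodin (Theorem \ref{thm:BLMS22}) and Ding--Smart (Theorem \ref{thm:DS20}) from the two-dimensional setting to three dimensions. After a normalization, it suffices to prove the contrapositive form: if $u(0)$ is of order $1$ and $\Delta_{\operatorname{d}} u = qu$ with $\|q\|_\infty \le K$ on $Q_n$, then the set where $|u| \ge \exp[-C_1 n]$ cannot be much smaller than $n^p$ in cardinality. The argument will proceed via (i) a deterministic unique-continuation statement on thin, tilted rectangular boxes, (ii) a Calder\'on--Zygmund-type growth lemma, and (iii) a stopping-time decomposition of $Q_n$ that converts the growth lemma into the volume bound.

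The crucial first step is a \emph{propagation-of-information} lemma on a thin tilted slab $R\subset\mathbb{Z}^3$ of cross-section roughly $h\times h$ and length $L$: if $\Delta_{\operatorname{d}} u = qu$ in $R$, and $|u|$ is controlled on a large portion of the two short end faces together with one long face, then $|u|$ is controlled throughout $R$. In the 2D Ding--Smart framework, the one-dimensional cross-section allows the discrete solution map across the slab to be treated (randomly) as a polynomial of controlled degree, and a Remez-type inequality then propagates boundary smallness to the interior. In three dimensions, I would first reduce the solution map on such a slab to an operator acting on its two-dimensional transverse cross-section; using a discrete Cauchy-data representation in the long direction, the solution is then expanded as a ``discrete polynomial'' of controlled degree in the two transverse coordinates, to which one applies a multivariate Remez-type inequality (or iterates a one-variable Remez estimate slice-by-slice inside the cross-section). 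The contribution from the potential $q$ is absorbed using $\|q\|_\infty \le K$ and the thinness of $R$, which ensures that the $qu$ term perturbs the polynomial approximation only by a controlled exponential factor of the form $e^{CKh}$.

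Once this thin-slab lemma is available, one deduces a growth lemma in the spirit of \cite[Lemma~3.18]{DS20}: in any sub-cube of $Q_n$ of side $m$, either $u$ has sufficiently many points with $|u|$ comparable to $\sup|u|$ up to a factor $e^{-Cm}$, or else the cube can be partitioned so that a suitable refinement inherits the same dichotomy. Iterating this via a Calder\'on--Zygmund stopping-time argument, all dyadic ``bad'' sub-cubes of $Q_n$ contribute a total cardinality bound, and the complement (the ``good'' set) is forced to have cardinality at least $n^p$ with $p>3/2$, the exponent being determined by the optimal cross-section scale $h=h(L)$ in the thin-slab lemma. The principal obstacle I foresee is precisely this step: in 2D the one-dimensional cross-section makes the polynomial bookkeeping sharp, giving exponent $3/2$, whereas in 3D the two-dimensional cross-section both widens the class of relevant polynomials and complicates the quantitative Remez bound, so one must carefully balance the transverse degree against the propagation length to gain, rather than lose, beyond the threshold $p=3/2$.
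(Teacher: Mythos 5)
Your plan is the na\"ive lift of the Buhovsky--Logunov--Malinnikova--Sodin / Ding--Smart scheme from $\Z^2$ to $\Z^3$: thin tilted slabs with two-dimensional cross-section, a ``discrete polynomial'' approximation of controlled degree in the two transverse variables, a multivariate Remez inequality, and a Calder\'on--Zygmund stopping time. This is not how Li--Zhang proceed, and the step you single out at the end as ``the principal obstacle'' is precisely the one that has no known resolution: once the cross-section is genuinely two-dimensional, the solution map of a thin slab no longer behaves like a low-degree one-variable polynomial, and there is no Remez-type control that would give you an $e^{-CL}$ propagation bound along a slab of length $L$. You flag the difficulty, but you do not resolve it, and without the thin-slab lemma your growth lemma and stopping-time count have nothing to run on. As written, this is a genuine gap rather than a different complete proof.

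What Li and Zhang actually do avoids the three-dimensional slab entirely. They first prove a propagation result on \emph{two-dimensional} lattice triangles embedded in $\Z^3$: for an equilateral triangle $T_n$ spanned by lattice vectors $\xi,\eta$, if $|u(x)+u(x-\xi)+u(x+\eta)|<C^{-n}|u(0)|$ on the inner triangle $T_{n/2}$, then $|u|>C^{-n}|u(0)|$ on essentially all of $T_n$. This keeps the transverse direction one-dimensional, so the polynomial/Remez bookkeeping stays within reach. They then couple this with a separate ``cone property'' (from any site in $Q_n$ one can follow a chain on which $|u|$ drops at most exponentially), which plays a role your Calder\'on--Zygmund decomposition does not replicate. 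These two ingredients first yield only a much weaker statement on a single sub-plane --- cardinality at least $C_3\,n^2/\log n$ of points with $|u|\ge\exp[-C_2 n^3]\,|u(0)|$, note the super-exponential rate $n^3$ --- and the exponential rate and the exponent $p>3/2$ in the final theorem are recovered only afterwards, by tiling $Q_n$ with many disjoint translates of the much smaller cube $Q_{n^{1/3}}$ and applying the sub-plane result at that scale. So the actual argument neither passes through a thin-slab lemma with $2$D cross-section nor through a stopping-time decomposition; it trades them for a planar reduction, a cone/chaining mechanism, and a rescaling step that you do not have.
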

The proof of this weaker variant of unique continuation for $\Z^3$ is done in several steps: 

\noindent \textit{Step 1.} In the first step, Li and Zhang use the ideas from Buhovsky et al. to prove that if $T_n$ is an equilateral, two-dimensional triangle centered at $0$, with sidelength $3n+1$, spanned by the vectors $\xi$ and $\eta$, then there exists $C>5$ such that if $|u(x)+u(x-\xi)+u(x+\eta)|<C^{-n}|u(0)|$ for all $x\in T_{n/2}$, then $|u(x)|>C^{-n}|u(0)|$ for essentially all points of $T_n$. 

\noindent \textit{Step 2.} In the next stage, the result from the first step is used to move to three-dimensional cubes $Q_n$. The following property is also proven and used: for any bounded $q$, if $u$ satisfies $\Delta_{\operatorname{d}}u=qu$ in $Q_n$, then from any point in $Q_n$ there exists a chain of points where $|u|$ decays at most exponentially. This is called ``the cone property''. 

\noindent \textit{Step 3.} 
With the results from the first two steps, it is proven that if  $\Delta_{\operatorname{d}}u=qu$ in $Q_n$, then there exists a sub-plane in $Q_n$ on which $u$ cannot have worse than super-exponential decay: the subset of points such that $|u(x)|\ge \exp[-C_2n^3]|u(0)|$ has cardinality at least $C_3\frac{n^2}{\log n}$. Finally, Li and Zhang find many disjoint translations of $Q_{n^{1/3}}$ inside $Q_n$ and use the unique continuation principle in the previous step to conclude the proof of Theorem \ref{thm:LZ22simp}. As a consequence, Theorem \ref{thm:LZ22simp} can be used to prove a Wegner estimate that is suitable for adapting the strategy of Bourgain and Kenig.

Interestingly, deterministic unique continuation suffices in dimension $n=3$ while in the dimension $n=2$ case considered  by Ding and Smart, a random version is established. The methods in \cite{DS20} have been recently extended in \cite{Hu24}, where a Wegner estimate is proven by using Bernoulli decompositions (see also \cite{Ain09}), to produce a localization result at the bottom of the spectrum. So far, Anderson localization in the discrete setting for dimensions higher than three is an open question. The main difficulty in
proving Anderson localization in $\Z^n$ is the lack of \textit{good} theorems relate to discrete quantitative unique continuation.

\subsection{Landis' conjecture for the Schr\"odinger equation with a discrete Laplacian}

Estimates on the decay of stationary solutions of discrete Schr\"odinger operators
$$
\Delta_{\dis} u_j + q_j u_j=0 \quad   \text{in } \mathbb{Z}^n,
$$
where $q:\Z^n\to \R$ is a bounded potential and $u_j:=u(j)$, and sharp uniqueness results for this equation, were obtained in \cite{LM18} by Y. Lyubarskii and Malinnikova: if $u(x)$ satisfies the following decay estimate
$$
\liminf\limits_{N\rightarrow\infty}\frac{\log(\max_{|j|_{\infty}\in \{N,N+1\}}|u(j)|)}{N}<-\|q\|_{\infty}-4n+1
$$
where $|j|_{\infty}=\max\{j_1,\ldots,j_d\}$, for $j\in \Z^n$, then $u\equiv 0$.

Recently, we proved a result in the spirit of Landis' conjecture for stationary discrete Schr\"odinger equations \cite{FBRS24}, by showing suitable quantitative lower estimates for the $L^2$-norm of the solution within a spatial lattice $(h\Z)^n$, with $h>0$. Set $\Delta_{h,\dis} f_j:=\frac{1}{h^2}\sum_{k=1}^d \big(f_{j+e_k}-2f_j+f_{j-e_k}\big)$, $j\in \Z^n$,
where we denote $f_j:=f(hj)$ and $e_k$ is the unit vector in the $k$-th direction.

\begin{thm}{\cite[Theorem B]{FBRS24}}\label{thm:Schr}
Let $h>0$ and $u\in \ell^2((h\mathbb{Z})^n)$ be a solution to
$$
\Delta_{\dis} u_j + q_j u_j=0
$$
where $\|q\|_{\infty}$, $\|u\|_2$ are finite and independent of $h$.
\begin{enumerate}
\item There exists $\mu_0=\mu_0(n)$ such that if
$$
h^n\sum_{j\in \Z^n} e^{\mu_0 |jh|^{4/3}}  |u_j|^2 <\infty,
$$
uniformly with respect to $h$, then there exists $h_0>0$ such that if $h\in(0,h_0)$, $u\equiv0$.
\item There exists $\mu_0=\mu_0(n)$ such that if, for some $\beta>3$
$$
h^n\sum_{j\in \Z^n}  e^{\mu_0 |jh|^{1+\frac{1}{\beta}}\log |j|h} |u_j|^2 < \infty,
$$
uniformly with respect to $h$, then there exists $h_0>0$ such that $h\in (0,h_0)$ implies $u\equiv0$.  

\item There exists $\mu_0=\mu_0(n)$ such that if
$$
h^n\sum_{j\in \Z^n}  e^{\mu_0 |j|\log |j|h} |u_j|^2 < \infty, \quad \text{ then } u\equiv0.
$$
\end{enumerate}
\end{thm}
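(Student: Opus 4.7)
The plan is to prove discrete Carleman-type inequalities on the lattice $(h\Z)^n$ matching each of the three decay profiles, and then run a Meshkov-style absorption argument. For parts (1) and (2) I would exploit that the discrete Laplacian is an $O(h^2)$ perturbation of the continuous one on sufficiently regular interpolants of $u$, so a Carleman estimate modelled on \eqref{eq:carlemanMeshkov} with weight $e^{2\tau|jh|^{4/3}}$ (respectively $e^{2\tau|jh|^{1+1/\beta}\log|j|h}$) can be transferred to the lattice provided the discretization errors are controlled, which forces $h<h_0$ depending on the constants. For part (3) the absence of an $h$ restriction forces a genuinely discrete argument with weight $e^{2\tau|j|\log|j|h}$; the key point is that this profile grows only linearly in $|j|$, so its discrete second differences remain bounded uniformly in $h$ and the commutator structure produces a positive principal term without smallness of $h$.

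First I would set up the conjugated operator $L_\tau := e^{\tau\phi}\,\Delta_{h,\dis}\,e^{-\tau\phi}$, split it into its symmetric and antisymmetric parts in $\ell^2((h\Z)^n)$, and compute the leading commutator term, which yields a positive multiplier of order $\tau^3$ dressed by the discrete convexity of $\phi$. For $\phi(r)=r^{4/3}$ the leading multiplier essentially matches $\tau^3 r$, recovering \eqref{eq:carlemanMeshkov} in the limit $h\to 0$. Discretization errors of order $h^2\tau^p$ appear throughout the commutator; for parts (1) and (2) they are swallowed by taking $h$ sufficiently small, whereas for part (3) the slower growth of $\phi$ keeps them manageable uniformly in $h$. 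Next I would apply the resulting inequality to $v_j := \eta_j u_j$, with $\eta$ a radial cutoff vanishing on $|jh|\le R_0$ and equal to one on $|jh|\ge 2R_0$. Using the equation and the discrete product formula
\[
\Delta_{h,\dis}(\eta u)_j = \eta_j\,(\Delta_{h,\dis}u)_j + \bigl([\Delta_{h,\dis},\eta]u\bigr)_j,
\]
the right-hand side of the Carleman inequality splits into a bulk piece dominated by $\|q\|_\infty|v_j|^2$ (absorbed by taking $\tau$ large) and a cutoff-commutator piece supported in the annulus $R_0\le |jh|\le 2R_0$ (bounded by the weighted $\ell^2$ norm of $u$ there, which is finite by hypothesis). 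The summability assumption then forces the surviving inequality to collapse, giving $u\equiv 0$ on $|jh|\ge 2R_0$; a standard interior discrete unique continuation for the bounded-potential difference equation then propagates the vanishing to all of $(h\Z)^n$.

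The main obstacle will be the commutator analysis for the discrete conjugate Laplacian, particularly in part (3). In the continuous setting the commutator of $\Delta$ with $\tau\phi$ telescopes cleanly via the Leibniz rule into derivatives of $\phi$, but the discrete commutator of $\Delta_{h,\dis}$ with $e^{\tau\phi}$ involves products of shifted evaluations of $\phi$, and the crucial positivity of the resulting quadratic form rests on a discrete convexity inequality tailored to the profile. The Taylor expansion of $\phi(jh\pm he_k)-\phi(jh)$ must be carried out with careful tracking of the remainders, and ensuring that these remainders do not devour the $\tau^3$ principal term — uniformly in $h$ for part (3) — is what dictates the precise growth of $\phi$ and the slightly stronger decay rate $|j|\log|j|h$ required there, compared to the classical Meshkov exponent $4/3$ recovered in part (1) when $h$ is small.
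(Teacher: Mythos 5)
Your overall route --- conjugate $\Delta_{h,\dis}$ by $e^{\tau\phi}$, split into symmetric and antisymmetric parts, extract a $\tau^3$ positive commutator, and absorb via a cutoff --- is the same Carleman-based strategy used in \cite{FBRS24}; the survey states explicitly that its proof ``relies on a Carleman inequality,'' so at the level of method you are in alignment with the source.

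There is, however, a concrete missing piece in your commutator analysis, and it is precisely the step that dictates all three thresholds in the theorem. After conjugation the nearest-neighbour shift operators acquire factors $e^{\pm\tau(\phi(jh\pm he_k)-\phi(jh))}$, so the conjugated discrete Laplacian is not a polynomial in $\tau$ as it is in the continuous setting: its coefficients behave like $\cosh\bigl(\tau\bigl(\phi(jh+he_k)-\phi(jh)\bigr)\bigr)$, and the would-be $\tau^3$ principal term only survives the remainder estimate if $\tau h\,|\nabla\phi|$ is controlled on the relevant scales. For $\phi\sim|jh|^{4/3}$ this forces something like $\tau h\,|jh|^{1/3}\lesssim1$, which is where the restriction $h<h_0$ in parts (1)--(2) and the tradeoff encoded by $\beta>3$ in part (2) actually originate; for part (3) it is the specific choice $\phi\sim|j|\log(|j|h)$ that keeps $\tau(\phi(j+e_k)-\phi(j))$ tame uniformly in $h$ --- which is not the same as, and much stronger than, your observation that the discrete second differences of a linearly growing profile are bounded. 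Without controlling these $\cosh$-type factors, the positive principal term does not dominate the discretization remainder and the Carleman inequality collapses. Two further points: you open with an interpolation argument (treat $\Delta_{h,\dis}$ as an $O(h^2)$ perturbation of $\Delta$ on regular interpolants) and then in the next paragraph describe a direct discrete conjugation; these are different strategies, and the interpolation route is delicate precisely because the weight $e^{\tau\phi}$ amplifies the $O(h^2)$ interpolation error exponentially, so that error must be estimated inside the weighted norm and not merely pointwise. Finally, your single near-origin cutoff is insufficient: $u$ need not be compactly supported, so a far cutoff at radius $R$ with $R\to\infty$ is also required before the Carleman inequality applies; the resulting far-annulus commutator term is killed by the weighted summability hypothesis, but this forces $\tau<\mu_0/2$ and must be reconciled with the need for $\tau$ large in the absorption step.
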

 The estimates in Theorem \ref{thm:Schr} manifest an interpolation phenomenon between continuum and discrete scales, showing that close-to-continuum and purely discrete regimes are different in nature. Observe that, when the mesh shrinks to $0$ (which is case (1) in Theorem \ref{thm:Schr}), we get the same $4/3$-exponent as in \cite{Me91,BK05} (i.e., Theorems \ref{thm:Me91} and \ref{thm:BKquantitative}, respectively). Part (3), for which $h$ is considered a fixed parameter, concerns the purely discrete setting, and the result can be compared with the uniqueness result in \cite{LM18}. Our proof relies on a Carleman inequality, so it is conjectured that the result may be sharp if the potential is allowed to be complex. On the other hand, in view of the results in \cite[Subsection 4.4]{LM18}, and the examples in \cite[Subsection 7.3]{FBRS24} (see also \cite[Corollary 7.3]{FBRS24}), it is reasonable to expect that $(2)$ and $(3)$ in Theorem \ref{thm:Schr} are not sharp.
At the moment, it remains uncertain whether $(1)$ in Theorem~\ref{thm:Schr}, concerning the close-to-continuum regime, achieves optimality.  

Even more recently, U. Das, M. Keller, and Y. Pinchover \cite{DKP24} studied Landis' conjecture for positive Schr\"odinger operators $H$ as in \eqref{eq:H} defined on more general graphs. 
More precisely, they gave a decay criterion that ensures when $H$-harmonic functions for a positive Schr\"odinger operator with potentials
bounded from above by $1$ are trivial \cite[Theorem 3.1]{DKP24}. Special cases of $\Z^n$ and regular trees for which they get explicit decay criterion as well as the fractional analogues are investigated.
 Their
approach relies on a discrete version of Liouville comparison principle. 
More details about this approach in the continuum setting will be given in Subsection \ref{sub:LandisLiou}.

\section{Other recent contributions}

There are only a few results that address Landis' conjecture in its original form in higher dimensions \cite{ABG19, R21, SS21}. Landis-type theorems have been studied for linear second-order elliptic operators with real coefficients of the following divergence form
\begin{equation}
\label{eq:elliptico}
\mathcal{L}(u):=-\operatorname{div}\big[\big(A(x)\nabla u+ub(x)\big)\big]+W(x)\cdot \nabla u +q(x)u,\quad x\in \Omega,
\end{equation}
where $A(x):=(a_{ij}(x))_{n\times n}$ is a symmetric, locally uniformly elliptic matrix, $a_{ij}\in L_{\operatorname{loc}}^{\infty}(\Omega)$, $W_i,b_i\in L_{\operatorname{loc}}^{p}(\Omega)$, $p>n$, $q\in L_{\operatorname{loc}}^{r}(\Omega)$, $r>n/2$, and $\Omega\subset \R^n$ is a domain.

For general operators of this type, it was known since Pli\'s \cite[Theorem 2]{Plis63} (see also \cite{Mi74}) that Landis' conjecture has a negative answer:
Pli\'s exhibited an operator in $\R^3$ with a H\"older-continuous matrix field $A$ and smooth terms $W,q$ which admits a nontrivial solution vanishing identically outside a ball. This is a surprising counterexample to the property of unique continuation at infinity. In view of this, the only hope of deriving the UCI is to impose some additional hypotheses on the operator.

\subsection{Landis' conjecture in dimensions one and higher}

L. Rossi \cite{R21} considered a general (real) elliptic operator
\begin{equation}
\label{eq:general}
\widetilde{\mathcal{L}}u:=\operatorname{Tr}(A(x)D^2u)+W(x)\cdot Du+qu,
\end{equation}
in an exterior domain $\Omega\subset \R^n$, where the matrix field $A$ is bounded, continuous and uniformly elliptic. Rossi proved Landis' conjecture in dimension $n=1$, reaching the threshold value $k=\sqrt{|q|}$. Rossi used ODE techniques and a maximum principle approach to treat this one-dimensional case. The latter positive result on Landis' conjecture is then extended for radially symmetric operators in arbitrary dimension, by applying the one-dimensional result to the spherical harmonic decomposition of the solution.  Rossi also extends the result to sign-changing solutions under the assumption that the \textit{generalized principle eigenvalue} $\lambda_0$ is nonnegative, see also \cite{LB21}\footnote{Here, the generalized eigenvalue of an operator $-\mathcal{L}$ is defined as 
$$
\lambda_0:=\sup\{\lambda\in \R: (\mathcal{L}+\lambda)u\le 0 \quad \text{ in } \Omega\}.
$$}.

In \cite{ABG19}, A. Arapostathis, A. Biswas, and D. Ganguly attacked the problem using probabilistic tools. They derive the UCI again under the additional assumption that $u\ge0$ or, if $\Omega=\R^n$, that $\lambda_0\ge 0$, where $\lambda_0$ is the generalized principal eigenvalue.
On the other hand, Landis' conjecture for such general second-order uniformly
elliptic operators, but without the assumption of bounded coefficients, was addressed in \cite{SS21} by means of the comparison principle.
The assumptions in \cite{ABG19, SS21} ensure that the generalized principal eigenvalue of the associated operator $\widetilde{\mathcal{L}}$ is nonnegative.
The threshold for the decay rates $k$ obtained in \cite{ABG19, SS21} depends on the coefficients of $\widetilde{\mathcal{L}}$ and it is not optimal in general. Landis-type results in manifold settings have been analyzed in \cite{PPV24}. 

Finally, there is an important recent result by N. D. Filonov and S. T. Krymskii \cite{FK23} where they consider the equation \eqref{eq:H} in the cylinder $\R\times (0,2\pi)^n$ with periodic boundary conditions. The potential $q$ is assumed to be bounded, and both $u$ and $q$ are assumed to be real-valued. It is shown in \cite{FK23} that the fastest rate of decay at infinity of non-trivial $u$ is $O(e^{-c|w|})$ for $n=1,2$, and $O(e^{-c|w|^{4/3}})$ for $n\ge 3$, where $w$ is the axial variable. This result suggests that the weak version of Landis' conjecture might not be true in dimensions higher than $3$, and the right decay should match with the complex-valued case. 

\subsection{Landis' conjecture via Liouville comparison principle and criticality theory}
\label{sub:LandisLiou}

Very recently, in \cite{DP24}, Das and Pinchover gave new partial affirmative answers to Landis' conjecture in all dimensions for two different types of linear elliptic operators in a domain $\Omega\subset \R^n$. The analogue of Landis' conjecture for quasilinear problems was also addressed. The novelty in \cite{DP24} is an approach relying on the application of Liouville comparison principles for \textit{nonnegative} Schr\"odinger operators, and criticality theory for general \textit{nonnegative} second order elliptic operators in divergence form as in \eqref{eq:elliptico}. As an illustration of their main result is the case of a nonnegative version of the Schr\"odinger operator \eqref{eq:H}. 

\begin{theorem}{\cite[Theorem 1.1]{DP24}}
    \label{thm:daspinchover}
    Let $H$ be a nonnegative Schr\"odinger operator in $\R^n$ as in \eqref{eq:H}, where $n\ge1$, $q\le 1$, and $q\in L^p_{\operatorname{loc}}(\R^n)$, $p>n/2$ if $n\ge 2$, and $p=1$ if $n=1$. If $u\in W_{\operatorname{loc}}^{1,2}$ is a solution to \eqref{eq:H} in $\R^n$ satisfying 
    $$
    |u(x)|=\begin{cases} O(1)\quad &n=1,\\
    O(|x|^{(2-n)/2}) \quad &n\ge 2,
    \end{cases} \quad\text{ as } |x|\to \infty\quad \text{ and } \quad \liminf_{|x|\to\infty} \frac{|u(x)||x|^{(n-1)/2}}{e^{-|x|}}=0,  \quad  \text{ then } u\equiv 0.
    $$
\end{theorem}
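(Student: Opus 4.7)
The approach I would take exploits the match between the decay envelope in the hypotheses and the asymptotics of the minimal positive solution of the reference equation $H_0\phi=0$ with $H_0:=-\Delta+1$, combined with a Liouville comparison principle in the spirit of Pinchover. For $n\ge 2$ the function $\phi_0(x)=|x|^{(2-n)/2}K_{(n-2)/2}(|x|)$, with $K_\nu$ the modified Bessel function of the second kind, is a positive solution of $H_0\phi_0=0$ in $\R^n\setminus\{0\}$ satisfying $\phi_0(x)\sim c_n|x|^{(2-n)/2}$ as $|x|\to0$ (with a logarithmic correction when $n=2$) and $\phi_0(x)\sim c'_n|x|^{-(n-1)/2}\exp[-|x|]$ as $|x|\to\infty$; the analogue for $n=1$ is $\phi_0(x)=\exp[-|x|]$. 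The two hypotheses on $u$ can thus be rephrased as $|u|\le C\phi_0$ near infinity, together with $\liminf_{|x|\to\infty}|u(x)|/\phi_0(x)=0$ along some sequence.

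Next I would reduce to the study of the nonnegative function $|u|$ via Kato's inequality: from $\Delta u=qu$ one obtains $-\Delta|u|\le q|u|$ distributionally, hence $H|u|\le 0$ in $\R^n$, while the envelope $|u|\le C\phi_0$ is preserved. Since $H$ is nonnegative, the Allegretto--Piepenbrink theorem produces a positive solution $\psi\in W^{1,2}_{\operatorname{loc}}(\R^n)$ of $H\psi=0$. Because $q\le 1$ gives $H\le H_0$ as quadratic forms, Agmon-type exponential estimates with barrier $\phi_0$ yield the one-sided comparison $\psi(x)\ge c\,\phi_0(x)$ at infinity---informally, no positive $H$-harmonic function can decay faster than the minimal $H_0$-harmonic one. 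A Liouville comparison principle then asserts that any nonnegative $H$-subsolution bounded by a constant times $\phi_0$ must be a nonnegative multiple of $\psi$: applied to the subsolution $|u|$ this gives $|u|=c\,\psi$ for some $c\ge 0$. Combining $|u|=c\psi$ with $\psi\ge c'\phi_0$ at infinity and the strict decay hypothesis $\liminf|u|/\phi_0=0$ forces $c=0$, and hence $u\equiv 0$.

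The genuine obstacle is the decay comparison $\psi\gtrsim\phi_0$ at infinity, which transfers the explicit Bessel asymptotics of the reference barrier into a lower bound on the a priori unknown positive solution $\psi$ of $H\psi=0$; without it one cannot promote $|u|\le C\phi_0$ into $|u|/\psi$ bounded, which is the input required by the Liouville comparison. This lower bound can be established via Agmon's method (using the WKB-type weight $\exp[-|x|]$ naturally associated to $q\le 1$) combined with criticality theory, distinguishing the subcritical case (where the minimal Green's function of $H$ majorizes $\phi_0$ by a standard comparison on large annuli) from the critical case (where $\psi$ is the unique positive solution and the comparison follows from its minimality). Secondary technical points---the logarithmic behavior of $\phi_0$ at the origin when $n=2$, the ODE-level setup when $n=1$, and the low regularity of $q$---can be handled by exterior-domain cut-offs and standard elliptic estimates, but the delicate transfer of the Bessel-type decay of $\phi_0$ onto $\psi$ is the core of the argument.
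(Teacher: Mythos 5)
Your proposal misreads the two decay hypotheses, and this undermines the main step. You claim that the two hypotheses can be rephrased as \(|u|\le C\phi_0\) near infinity plus a $\liminf$ condition, by identifying the $O(|x|^{(2-n)/2})$ envelope with the small-$|x|$ asymptotics of the Bessel function $\phi_0(x)=|x|^{(2-n)/2}K_{(n-2)/2}(|x|)$. But the theorem's condition $|u(x)|=O(|x|^{(2-n)/2})$ is a condition as $|x|\to\infty$, not at the origin, and for large $|x|$ the envelope $|x|^{(2-n)/2}$ is vastly larger than $\phi_0(x)\sim c|x|^{-(n-1)/2}e^{-|x|}$. (For $n=2$ the first hypothesis merely says $u$ is bounded, which certainly does not give $|u|\lesssim e^{-|x|}/\sqrt{|x|}$.) The second hypothesis gives $|u|=o(\phi_0)$ only along some sequence $|x_j|\to\infty$, not on a neighborhood of infinity. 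So the a priori bound $|u|\le C\phi_0$ that your Liouville step requires is simply not part of the hypotheses, and nothing in your argument derives it; the chain $|u|\le C\phi_0 \Rightarrow |u|\le C'\psi \Rightarrow |u|=c\psi$ therefore starts from an unavailable premise.

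The paper's own description of Das--Pinchover makes clear that the proof uses \emph{two} distinct reference operators, not one, precisely because the two hypotheses live at two different decay scales: the polynomial envelope $|x|^{(2-n)/2}$ matches the positive solution of minimal growth at infinity of $(-\Delta-W_{\operatorname{opt}})u=0$, where $W_{\operatorname{opt}}\simeq\frac{(n-2)^2}{4|x|^2}$ is the optimal Hardy weight ($W_{\operatorname{opt}}=0$ and criticality when $n=1,2$), while the exponential scale $|x|^{(1-n)/2}e^{-|x|}$ matches the minimal positive Green function of $-\Delta+1$. Pinchover's Liouville comparison principle is framed for a pair of nonnegative operators $(P_0,P_1)$ with a criticality/minimal-growth hypothesis relating their positive solutions; it is this two-operator comparison, plus criticality theory (Agmon ground states, minimal positive Green functions), that permits one to combine a weak polynomial global bound with an exponential $\liminf$ along a sequence. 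Your single-operator Liouville claim ("any nonnegative $H$-subsolution bounded by $C\phi_0$ equals $c\psi$'') is neither the standard statement of the Liouville comparison principle nor does it suffice given the actual hypotheses. The secondary steps you outline—Kato's inequality to pass to $|u|$, Allegretto--Piepenbrink for existence of a positive $H$-solution, the form inequality $H\le H_0$—are all sensible ingredients in that theory, but they must be deployed within the double-barrier framework above; as written, the proof does not close.
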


Theorem \ref{thm:daspinchover} proves Landis' conjecture in any dimension for nonnegative $H$ with a potential $q$ which can be unbounded from below. The theorem assumes a slower decay rate of $u$ than the one Landis originally proposed in his conjecture. The result does not need the exponential decay of $u$ at infinity in $\R^n$ and it is enough to have a sequence $(x_j)$ with $|x_j|\to \infty$ such that $(u(x_j))$ decays faster than $e^{-|x_j|}/|x_j|^{\frac{n-1}{2}}$. The result is sharp, there is an example to prove it \cite[Example 1.2]{DP24}, which we will describe a few lines below. 

Let us first introduce some definitions. Consider an elliptic operator $\mathcal{L}$ of the form \eqref{eq:elliptico}. A function $u$ is said to be a \textit{positive solution to the equation} $\mathcal{L}u=0$ \textit{of minimal growth at} $\bar{\infty}$ if for some $K\Subset \Omega$, $u\in W_{\operatorname{loc}}^{1,2}(\Omega\setminus K)$, $u$ is a positive solution to the equation $\mathcal{L}u=0$ in $\Omega\setminus K$, and for any positive supersolution $v$ of $\mathcal{L}v=0$ in a subdomain $\Omega\setminus K_1$ with $K\Subset K_1\Subset \Omega$ such that $K_1$ is a Lipschitz bounded domain in $\Omega$, the inequality $u\le v$ on $\partial K_1$ implies $u\le v$ in $\Omega\setminus K_1$. 

Fix $y\in \Omega$. A function $G_{\mathcal{L}}(\cdot, y)\in W_{\operatorname{loc}}^{1,2}(\Omega\setminus \{y\})\cap L_{\operatorname{loc}}^1(\Omega)$ is called a \textit{minimal positive Green function} of $\mathcal{L}$ in $\Omega$ with singularity at $y$, if $G_{\mathcal{L}}(\cdot, y)$ is a positive a solution to the equation $\mathcal{L}u=0$ in $\Omega\setminus\{y\}$ which has a minimal growth at $\bar{\infty}$ in $\Omega$, and satisfies the equation $\mathcal{L}(G_{\mathcal{L}})(\cdot,y)=\delta_y$ in the distributional sense, where $\delta_y$ is the Dirac measure with a charge at $y$. 
A positive solution to $\mathcal{L}u=0$ in $\Omega$ which has a minimal growth at $\bar{\infty}$ is called an \textit{Agmon ground state} of $\mathcal{L}$ in $\Omega$, denoted by $\Psi$. 

The above-mentioned example is as follows. Let $G_{H_1}$ be a minimal positive Green function of the operator $H_1:=-\Delta+1$ in $\R^n$ having a singularity at the origin. It is known (\cite[p.24]{S92} and \cite[Appendix]{DP24}) that $G_{H_1}\simeq |x|^{(1-n)/2}e^{-|x|}$ for $|x|\ge 1$. Let $0< W\in C_0^{\infty}(\R^n)$ and consider the generalized principal eigenvalue of $H_1$.
Then $(H_1-\lambda_0W)u=0$ in $\R^n$ admits a positive solution which is an Agmon ground state $\Psi$. It follows that $\Psi(x)\simeq G_{H_1}(x)\simeq |x|^{(1-n)/2}e^{-|x|}$ for $|x|\ge1$.

It can be realized that the asymptotics at infinity of the functions $|x|^{(1-n)/2}e^{-|x|}$ and $|x|^{(2-n)/2}$ in Theorem \ref{thm:daspinchover} are the same as positive solutions of minimal growth at infinity of the equations $(-\Delta +1)u=0$ and $(-\Delta-W_{\operatorname{loc}})u=0$, respectively, where $W_{\operatorname{opt}}$ is an optimal Hardy-weight of $-\Delta$ in $\R^n$. Precisely, $W_{\operatorname{opt}}\simeq (n-2)^2|x|^{-2}/4$ for $n\ge 3$ (subcritical case) and $W_{\operatorname{opt}}=0$ for $n=1,2$ (critical case). This observation above leads Das--Pinchover to extend Theorem \ref{thm:daspinchover} to nonnegative Schr\"odinger-type operators (i.e., with variable coefficients) in a general domain $\Omega$ in $\R^n$ and to prove a general  \textit{Landis-type theorem} for symmetric operators, see \cite[Theorem 1.3]{DP24}.

Das and Pinchover extend also their result to nonsymmetric linear second-order elliptic operators with real coefficients of divergence form as in \eqref{eq:elliptico}. As explained above, in general, Landis' conjecture may not hold for such an operator \cite{Plis63}, but under suitable restrictions on the coefficients and the crucial assumption $\widetilde{\mathcal{L}}\ge 0$, it is possible to establish Landis-type theorems, see \cite[Theorems 1.4 and 1.5]{DP24}, in the spirit of refined maximum principles \cite[Section 4]{P99}.

\subsection{Landis conjecture and periodic operators}

Another question related to Landis conjecture and mainstreamed by P. Kuchment \cite{Ku12} concerns periodic operators. If we consider an elliptic differential operator of second order $\mathcal{L}u=\operatorname{div}\big(A\nabla u\big) +qu$, whose coefficient are, real, valued, $C^1$-smooth (or even $C^{\infty}$-smooth) in $\R^n$ and periodic with respect to $\Z^n$ translations, we pose the following question: is it true that a solution $\mathcal{L}u=0$ such that
$$
|u(x)|\le e^{-|x|^{\gamma}},
$$
where $\gamma>1$, then $u\equiv0?$.

Related to this question, at the time of writing, still another important contribution by Krymskii, Logunov and F. Pagano has appeared \cite{KLP25}. They construct a real-valued solution to the eigenvalue problem $-\operatorname{div}\big(A\nabla u\big)=\lambda u$, $\lambda>0$ in the cylinder $\mathbb{T}^2\times \R$ with a real, uniformly elliptic, and uniformly $C^1$ matrix $A$ such that $|u(x,y,t)|\le Ce^{-ce^{c|t|}}$ for some $c,C>0$. Their result shows that there is an obstacle to generalize, at least directly, the approach for the continuous Anderson--Bernoulli model by Bourgain and Kenig to operators in divergence form $\mathcal{L}u=\operatorname{div}(A\nabla u)+qu$ because one of the ingredients of the approach fails: quantitative unique continuation properties for operators in divergence form (with variable coefficient of higher order) are substantially weaker than of the operators in the form $\mathcal{L}u=\Delta u +qu$.

Krymskii et al. emphasize  that the current methods
of quantitative unique continuation, such as Carleman inequalities and monotonicity formulas, do not take into account the randomness of $q$ (in the Anderson--Bernoulli model) or the periodicity of $A$ and $q$ in the periodic setting. It would be interesting to develop a method that considers randomness or periodicity of the
coefficients to prove quantitative results ensuring slow decay on large scales.

\subsection{Landis-type results for parabolic and dispersive equations}

In a broader sense, we can consider \textit{Landis-type results} when we are interested in the maximum vanishing rate of solutions to equations with potentials, namely:
\begin{itemize}
\item In the case of elliptic equations, we are concerned about the maximal rate at which nontrivial solutions vanish at infinity. This is related to the original Kondrat'ev--Landis' conjecture.
\item Concerning dispersive or parabolic equations, we are concerned about the maximal spatial decay rate  of nontrivial solutions when time varies within a bounded interval. These are usually called in the literature \textit{dynamical uncertainty principles}.
\end{itemize}

Let us consider the time-dependent Schr\"odinger equation
\begin{equation}
\label{fS}
 \partial_tu(t,x)=i(\Delta u+q(t,x)u).
\end{equation}
In a series of works, L. Escauriaza, Kenig, G. Ponce, and Vega \cite{EKPV-CPDE, EKPV-JEMS, EKPV-DUKE, EKPV-BAMS, EKPV-CMP} proved that if $q$ satisfies one of the following conditions
\begin{itemize}
\item[(i)] $\lim_{R\to\infty}\int_0^T\sup_{|x|>R}|q(t,x)|\,dt=0$
\item[(ii)] $q(t,x)=q_1(x)+q_2(t,x)$, where $q_1$ is real-valued and bounded and for some positive $\alpha$ and $\beta$,
$$ \sup_{[0,T]}\|e^{\alpha\beta T^2|x|^2/(\sqrt{\alpha}t+\sqrt{\beta}(T-t))^2}q_2(t)\|_{L^{\infty}(\R^n)}<+\infty,
$$
\end{itemize}
and we assume that $u$ is a solution to \eqref{fS} which fulfills the decay conditions $ |u(0,x)|\le Ce^{-\alpha|x|^2}$,
$ |u(T,x)|\le Ce^{-\beta|x|^2}
$
with $\alpha\beta>1/(16T^2)$, then $u\equiv0$.  Similarly, for the heat equation with potential, the following result can be deduced: let
$q(t,x)\in L^{\infty}(\R\times \R^n)$ and $u$ be a solution to $\partial_tu=\Delta u+qu$; if
$ |u(T,x)|\le Ce^{-\delta|x|^2}
$
and $\delta>1/(4T)$, then $u\equiv0$. The conditions on $\alpha, \beta$ and $\delta$ are sharp. These results, which exploit a connection with Hardy's uncertainty principle, can be understood as dispersive and parabolic analogues of Landis' conjecture, see also \cite{BFGRV13, CF17}.

Landis-type uniqueness theorems for the time-dependent Schr\"odinger equation with a discrete Laplacian were studied in \cite{FB19, FB20, BFV17, JLMP18}.  The equation under study is
\begin{equation}\label{ph1}
\partial_t u_j=i(\Delta_{\dis} u_j + q_j u_j) \quad   \text{in } \mathbb{Z}^n   \times \R_+,
\end{equation}
where $q:\Z^n\times \R_+\to \R$ is a bounded potential and we use the notation $u_j=u_j(t):=u(j,t)$. It was proved in \cite{JLMP18} (for $n=1$) and in \cite{BFV17} (for arbitrary dimensions) that, if $u$ is a solution to \eqref{ph1} and there exists a constant $\gamma$ such that
$$
|u(j,0)|+|u(j,1)|\le C \exp(-\gamma |j|\log |j|), \quad j\in \Z^n\setminus\{0\},
$$
then $u\equiv0$. From the $q\equiv0$ free case these results are not expected to be sharp. Recently in \cite{FBRS24}, we also proved Landis-type results for the semidiscrete heat in a mesh, studying the interpolation phenomenon between close-to-continuum and discrete regimes.

\subsection{Landis' conjecture for the Dirac equation}

In \cite{Ca22}, B. Cassano determined the largest rate of exponential decay at infinity for non-trivial solutions to the Dirac equation
$$
\mathcal{D}_nu+\mathbb{V}u=0\quad \text{ in } \R^n,
$$
where $\mathcal{D}_n$ is the massless Dirac operator in dimension $n\ge 2$ and $\mathbb{V}$ is a (possibly non-Hermitian) matrix-valued perturbation such that $|\mathbb{V}|\sim |x|^{-\varepsilon}$ at infinity, for $-\infty<\varepsilon< 1$. The result in \cite{Ca22} is sharp for dimensions $n=2,3$, and Cassano provided explicit examples of solutions that exhibit the prescribed decay, in the presence of a potential with the corresponding behaviour at infinity.  In the examples in dimension 2 and 3 in \cite{Ca22}, the constructed potentials are non-symmetric (which corresponds with ``complex-valued'' potentials).
Previous results are due to N. Boussa\"id and A. Comech \cite{BC16}, who studied the point spectrum of the nonlinear massive Dirac equation in any spatial dimension, linearized around one of the solitary wave solutions, and considered the presence of a bounded potential decaying at infinity in a weak sense, showing linear exponential decay for eigenfunctions.

\subsection{Non local versions of Landis' conjecture}

Landis-type conjecture for the fractional Schr\"odinger equation
\begin{equation}
    \label{eq:Landisfrac}
    \big((-\Delta)^s+q\big)u=0\quad \text{ in } \R^n,
\end{equation}
with $s\in (0,1)$ and $|q(x)|\le 1$ was treated by A. R\"uland and Wang in \cite{RW19}. They discuss both qualitative and quantitative estimates when the potentials are either differentiable or simply bounded. 

Concerning qualitative estimates, R\"uland and Wang show that if $s\in (0,1)$ and $u\in H^s(\R^n)$ is a solution to \eqref{eq:Landisfrac}, where $q\in C^1(\R^n)$ and $|x\cdot \nabla q(x)|\le 1$, and moreover $u$ satisfies the decay condition
\begin{equation}
\label{eq:decayfrac}
    \int_{\R^n}e^{|x|^{\alpha}}|u|^2\,dx\le C<\infty
\end{equation}
for some $\alpha>1$, then $u\equiv 0$. Without the additional regularity on $q$, under the assumption $s\in (\frac14,1)$ and that $u$ satisfies \eqref{eq:decayfrac} for some $\alpha>\frac{4s}{4s-1}$, the same conclusion holds. On the other hand, R\"uland and Wang give a quantitative version of the latter result, that is, if $s\in (\frac14,1)$ and $u\in H^s(\R^n)$ is a solution to \eqref{eq:Landisfrac} such that $\|u\|_{L^2(B_1)}=1$ and $\|u\|_{L^{\infty}(\R^n)}\le C_0$ for some $C_0>0$, then there exists $C=C(n,s,C_0)>0$ such that 
$$
  \inf_{|x_0|=R}\sup_{|x-x_0|<1}|u(x)|\ge C \exp[-CR^{\frac{4s}{4s-1}}\log R]\quad \text{ for } R\gg 1.
$$
The approach used is the so-called Caffarelli--Silvestre $s$-harmonic extension \cite{CS07} and suitable Carleman-type estimates. For the quantitative estimates, they use the strategy of Bourgain and Kenig \cite{BK05} explained in Section \ref{BK}. Other Landis-type results related to non-local operators, in the spirit of Caffarelli--Silvestre extension approach,  have been obtained in \cite{K22, KW23}. Moreover, a fractional analogue of Landis' conjecture on $\Z^n$ was recently investigated in \cite{DKP24}.


\begin{thebibliography}{99}

\bibitem[A78]{A78}
 L. V. Ahlfors,
  \emph {Complex analysis. An introduction to the theory of analytic functions of one complex variable. Third edition.} 
International Series in Pure and Applied Mathematics. McGraw-Hill Book Co., New York, 1978.

\bibitem[Ain09]{Ain09}
M. Aizenman, F. Germinet, A. Klein, and S. Warzel, Simone,
On Bernoulli decompositions for random variables, concentration bounds, and spectral localization.
\newblock{\em Probab. Theory Related Fields} 143 (2009), no. 1-2, 219--238.

\bibitem[A58]{A58}
P. Anderson,
Absence of diffusion in certain random lattices.
\newblock{\em Phys. Review} 109 (1958), 1492--1505.


\bibitem[ABG19]{ABG19}
A. Arapostathis, A. Biswas, and D. Ganguly,
Certain Liouville properties of eigenfunctions of elliptic operators. 
\newblock {\em Trans. Amer. Math. Soc.} 371 (2019), no. 6, 4377--4409. 

\bibitem[BFGRV13]{BFGRV13}
 J. A. Barcel\'o, L. Fanelli, S. Guti\'errez, A. Ruiz, and M. C. Vilela,
 Hardy uncertainty principle and unique continuation properties of covariant Schr\"odinger flows. 
 \newblock {\em J. Funct. Anal.} 264 (2013), no. 10, 2386--2415.

\bibitem[BC16]{BC16}
N. Boussa\"id and A. Comech, 
On spectral stability of the nonlinear Dirac equation, 
\newblock {\em J. Funct. Anal.} 271 (2016) 1462--1524.

\bibitem[B04]{B04}
J. Bourgain, 
On localization for lattice Schr\"odinger operators involving Bernoulli variables. \newblock {\em Geometric aspects of functional analysis,} 77--99, Lecture Notes in Math., 1850, Springer, Berlin, 2004.

\bibitem[BK05]{BK05}
J. Bourgain and C. E. Kenig, 
On localization in the continuous Anderson-Bernoulli model in higher dimension.
\newblock {\em Invent. Math.} 161 (2005), no. 2, 389--426. 

\bibitem[BLMS22]{BLMS22}
 L. Buhovsky, A. Logunov, E. Malinnikova, and M. A. Sodin,  
 A discrete harmonic function bounded on a large portion of $\Z^2$ is constant. 
 \newblock {\em Duke Math. J.} 171 (2022), no. 6, 1349--1378.

 \bibitem[CS07]{CS07}
 L. Caffarelli and L. Silvestre,
 An extension problem related to the fractional Laplacian. 
  \newblock {\em Commun. Partial Differ. Equ.} 32 (2007), no. 7--9, 1245--1260. 


\bibitem[C39]{C39}
T. Carleman, 
Sur un probl\`eme d'unicit\'e pur les syst\`emes d'\'equations aux d\'eriv\'ees partielles \`a deux variables ind\'ependantes. (French)
\newblock {\em Ark. Mat. Astr. Fys.} 26 (1939), no. 17, 9 pp.


\bibitem[CKM87]{CKM87}
R. Carmona, A. Klein, and F. Martinelli, 
Anderson localization for Bernoulli and other singular potentials. 
\newblock {\em Comm. Math. Phys.} 108, 41--66 (1987).

\bibitem[Ca22]{Ca22}
 B. Cassano, 
 Sharp exponential decay for solutions of the stationary perturbed Dirac equation. 
 \newblock {\em  Commun. Contemp. Math.} 24 (2022), no. 2, Paper No. 2050078, 24 pp.


\bibitem[CF17]{CF17}
 B. Cassano and L. Fanelli,  
 Gaussian decay of harmonic oscillators and related models. 
  \newblock {\em J. Math. Anal. Appl.} 456 (2017), no. 1, 214--228.
 
\bibitem[Cruz99]{Cruz99}
J. Cruz-Sampedro, 
Unique continuation at infinity of solutions to Schr\"odinger equations with complex-valued potentials. 
\newblock{\em Proc. Edinburgh Math. Soc. (2)}, 42 (1999), no. 1, 143--153. 

\bibitem[DKP24]{DKP24}
U. Das, M. Keller, and Y. Pinchover,
On Landis conjecture for positive Schr\"odinger operators on graphs. 
\newblock {\em arXiv:2408.02149}. 

\bibitem[DP24]{DP24}
U. Das and Y. Pinchover,
The Landis conjecture via Liouville comparison principle and criticality theory. 
\newblock {\em arXiv:2405.11695}. 

\bibitem[Davey14]{Davey14} B. Davey, \newblock
Some quantitative unique continuation results for eigenfunctions of the magnetic Schr\"odinger operator. 
\newblock{\em Comm. Partial Differential Equations}, 39 (2014), no. 5, 876--945.

\bibitem[Davey15]{Davey15} 
B. Davey, 
\newblock
A Meshkov-type construction for the borderline case. 
\newblock{\em Differential Integral Equations}, 28 (2015), no. 3-4, 271--290.

\bibitem[Davey20]{Davey20} B. Davey, \newblock On Landis' conjecture in the plane for some equations with sign-changing potentials. \newblock{\em Rev. Mat. Iberoam.}, 36 (2020), 1571--1596.

\bibitem[DJFA20]{DJFA20} 
B. Davey, 
\newblock
Quantitative unique continuation for Schrödinger operators. 
\newblock{\em J. Funct. Anal.} 279 (2020), no. 4, 108566, 23 pp. 

\bibitem[Davey23]{Davey23} 
B. Davey, 
\newblock On Landis' conjecture in the plane for potentials with growth. 
\newblock{\em Vietnam J. Math.}, (2023).


\bibitem[DKW17]{DKW17}
B. Davey, C. E. Kenig, and J.-N. Wang,
\newblock The Landis conjecture for variable coefficient second-order elliptic PDEs.
\newblock {\em  Trans. Amer. Math. Soc.}, 369 (2017), 8209--8237.



\bibitem[DW20]{DW20}
B. Davey, and J.-N. Wang,
\newblock Landis' conjecture for general second order elliptic equations with singular lower order terms in the plane.
\newblock {\em  J. Differential Equations}, 268 (2020), 977--1042.



\bibitem[DKW20]{DKW20}
B. Davey, C. E. Kenig, and J.-N. Wang,
\newblock On Landis' conjecture in the plane when the potential has an
exponentially decaying negative part.
\newblock {\em  St. Petersburg Math. J.}, 31 (2020), 337--353.

\bibitem[PPV24]{PPV24}
N. De Ponti, S. Pigola, and G. Veronelli,
Unique continuation at infinity: Carleman estimates on general warped cylinders.
\newblock {\em Int. Math. Res. Not. IMRN} 2024, no. 16, 11910--11932. 

\bibitem[DS20]{DS20}
J. Ding and C. K. Smart,
\newblock Localization near the edge for the Anderson Bernoulli model on the two dimensional lattice. 
\newblock {\em Invent. Math. 219 (2020), no. 2, 467--506}.

\bibitem[DF88]{DF88}
H. Donnelly and C. Fefferman, 
\newblock Nodal sets of eigenfunctions on Riemannian manifolds.
\newblock {\em Invent. Math.} 93 (1988), no. 1, 161--183.

\bibitem[DZZ08]{DZZ08}
T. Duyckaerts, X. Zhang, and E. Zuazua, 
\newblock On the optimality of the observability inequalities for parabolic and hyperbolic systems with potentials. 
\newblock {\em Ann. Inst. H. Poincaré C Anal. Non Linéaire}, 25 (2008), no. 1, 1--41.

\bibitem[EB23]{EB23}
S. Ervedoza and K. Le Balc'h,
\newblock Cost of observability inequalities for elliptic equations in 2-d with potentials and applications to control theory. 
\newblock {\em Comm. Partial Differential Equations} 48 (2023), no. 4, 623--677.



\bibitem[EV02]{EV02} L. Escauriaza and S. Vessella,
\newblock Optimal three cylinder inequalities for solutions to parabolic equations with Lipschitz leading coefficients. 
\newblock{{\em Inverse problems: theory and applications (Cortona/Pisa, 2002)}, 79–87, Contemp. Math., 333, Amer. Math. Soc., Providence, RI, 2003}.


\bibitem[EKPV06]{EKPV-CPDE} L. Escauriaza, C. Kenig, G. Ponce, and L. Vega,
\newblock On uniqueness properties of solutions of Schr\"odinger equations.
\newblock{\em  Comm. Partial Differential Equations} 31 (2006), 1811--1823.

\bibitem[EKPV08]{EKPV-JEMS} L. Escauriaza, C. Kenig, G. Ponce, and L. Vega,
\newblock Hardy's uncertainty principle, convexity and Schr\"odinger evolutions.
\newblock{\em J. Eur. Math. Soc.} 10 (2008), 883--907.

\bibitem[EKPV10]{EKPV-DUKE} L. Escauriaza, C. Kenig, G. Ponce, and L. Vega,
\newblock The sharp Hardy uncertainty principle for Schr\"odinger evolutions.
\newblock{\em  Duke Math. J.} 155 (2010), 163--187.

\bibitem[EKPV12]{EKPV-BAMS} L. Escauriaza, C. Kenig, G. Ponce, and L. Vega,
\newblock Uniqueness properties of solutions to Schr\"odinger equations.
\newblock{\em   Bull. Amer. Math. Soc. (N.S.)} 49 (2012), 415--442.


\bibitem[EKPV16]{EKPV-CMP} L. Escauriaza, C. Kenig, G. Ponce, and L. Vega,
\newblock Hardy uncertainty principle, convexity and parabolic evolutions.
\newblock{\em   Comm. Math. Phys.} 346 (2016), 667--678.

\bibitem[FB19]{FB19}
A. Fern{\'a}ndez-Bertolin,
\newblock{A discrete Hardy's uncertainty principle and discrete evolutions.}
\newblock {\em  J. Anal. Math.}, 137 (2019), no. 2, 507--528.

\bibitem[FB20]{FB20}
A. Fern{\'a}ndez-Bertolin.
\newblock{Convexity properties of discrete Schr\"odinger evolutions.}
\newblock {\em J. Evol. Equ.} 20 (2020), 257--278.

\bibitem[FBRS24]{FBRS24}
A. Fern{\'a}ndez-Bertolin, L. Roncal, and D. Stan,
\newblock Landis-type results for discrete equations.
\newblock{\em arXiv:2401.09066}.

\bibitem[FBV17]{BFV17}
A. Fern{\'a}ndez-Bertolin and L. Vega,
\newblock Uniqueness properties for discrete equations and {C}arleman
estimates.
\newblock {\em J. Funct. Anal.}, 272 (2017), 4853--4869.

\bibitem[FK23]{FK23}
N. D. Filonov and S. T. Krymskii,
\newblock On the Landis conjecture in a cylinder.
\newblock{\em  Russ. J. Math. Phys.}  31 (2024), no. 4, 645--665.

\bibitem[FHHH82]{FHHH82}
R. Froese, I. Herbst, M. Hoffmann-Ostenhof, and T. Hoffmann-Ostenhof,
\newblock{On the absence of positive eigenvalues for one-body Schr\"odinger operators.}
\newblock {\em J. Analyse Math.} 41 (1982), 272--284.

\bibitem[FHHH83]{FHHH83}
R. Froese, I. Herbst, M. Hoffmann-Ostenhof, and T. Hoffmann-Ostenhof,
\newblock{$L^2$-lower bounds to solutions of one-body Schr\"odinger equations.} 
\newblock {\em  Proc.
Roy. Soc. Edinburgh.} 95A (1983), 25--38. 

\bibitem[FMSS85]{FMSS85}
J. Fr\"ohlich, F. Martinelli, E. Scoppola, and T. Spencer,
\newblock{Constructive proof of localization in the Anderson tight binding model.} 
\newblock {\em Comm. Math. Phys.} 101 (1985), no. 1, 21--46.

\bibitem[FS83]{FS83}
J. Fr\"ohlich and T. Spencer,
\newblock{Absence of diffusion in the Anderson tight binding model for large disorder or low energy.} 
\newblock {\em Comm. Math. Phys.} 88 (1983), no. 2, 151--184.

\bibitem[MAGF18]{MAGF18}
M.A. Garc\'ia-Ferrero, 
\newblock{2D Unique Continuation and Quasifonformal - Talk 1: On
Landis’ Conjecture in the Plane,  \textit{in Unique continuation and inverse problems, Summer School, Kopp, 2018}}. Organizers: H. Koch, A. R\"uland, and C. Thiele \url{https://www.math.uni-bonn.de/~thiele/workshop20/SS18Kopp.pdf}.

\bibitem[GL86]{GL86}
N. Garofalo and F.-H. Lin,
\newblock{Monotonicity properties of variational integrals, Ap weights and unique continuation.}
\newblock{\em Indiana Univ. Math. J.} 35 (1986), no. 2, 245--268.


\bibitem[GL87]{GL87}
N. Garofalo and F.-H. Lin,
\newblock{Unique continuation for elliptic operators: a geometric-variational approach.}
\newblock{\em Comm. Pure Appl. Math.} 40 (1987), no. 3, 347--366.

\bibitem[H83]{H83}
L. H\"ormander,
\newblock{Uniqueness theorems for second order elliptic differential equations.} 
\newblock{\em Comm. Partial Differential Equations} 8 (1983), no. 1, 21--64.

\bibitem[Hsu22]{Hsu22}
M. Hsu, 
The Landis conjecture on exponential decay, \textit{in Nodal domains and landscape functions, Summer School, Kopp, 2022} \url{https://www.math.uni-bonn.de/ag/ana/WiSe2223/summer_school/Kopp2022.pdf} Organizers: O. Saari and C. Thiele.


\bibitem[HWZ16]{HWZ16}
S. Huang, M. Wang, and Q. Zheng,  
\newblock{Quantitative uniqueness of some higher order elliptic equations.} 
\newblock{\em J. Math. Anal. Appl.} 444 (2016), no. 1, 326--339.

\bibitem[Hu24]{Hu24}
O. Hurtado,
Localization and unique continuation for non-stationary Schr\"odinger operators on the $2D$ lattice.
\newblock{\em arXiv:2405.10636}.

\bibitem[JLMP18]{JLMP18}
P. Jaming, Y. Lyubarskii, E. Malinnikova, and K.-M. Perfekt,
\newblock Uniqueness for discrete Schr\"odinger evolutions.
\newblock {\em  Rev. Mat. Iberoam.}, 34 (2018), 949--966.

\bibitem[J07]{J07}
S. Jitomirskaya,
\newblock{Ergodic Schr\"odinger operators (on one foot). Spectral theory and mathematical
physics: a Festschrift in honor of Barry Simon’s 60th birthday,} 613--647, Proc. Sympos. Pure Math., 76, Part 2, Amer. Math. Soc., Providence, RI, 2007.

\bibitem[K05]{K05}
C. E. Kenig,
\newblock Some recent quantitative unique continuation theorems.
\newblock {\em S\'eminaire: \'Equations aux D\'eriv\'ees Partielles. 2005--2006}, Exp. No. XX, 12pp., S\'emin. \'Equ. D\'eriv. Partielles, \'Ecole Polytech., Palaiseau, 2006.

\bibitem[K07]{K07}
C. E. Kenig,
\newblock Some recent applications of unique continuation. Recent developments in nonlinear partial differential equations, 25--56, 
\newblock {\em Contemp. Math.}, 439, Amer. Math. Soc., Providence, RI, 2007.

\bibitem[KSW15]{KSW15}
C. E. Kenig, L. Silvestre, and J.-N. Wang,
\newblock On Landis' conjecture in the plane.
\newblock {\em  Comm. Partial Differential Equations}, 40 (2015), 766--789.

\bibitem[KW15]{KW15}
C. E. Kenig, and J.-N. Wang,
\newblock Quantitative uniqueness estimates for second order elliptic equations with
unbounded drift.
\newblock {\em  Math. Res. Lett.}, 22 (2015), 1159--1175.


\bibitem[KL88]{KL88}
V. A. Kondrat'ev and E. M. Landis,
Qualitative theory of second order linear partial differential equations.
\newblock {\em (Russian) Partial differential equations, 3 (Russian)}, 99--215, 220, {\em Itogi Nauki i Tekhniki, Sovrem. Probl. Mat. Fund. Naprav.}, 32, Akad. Nauk SSSR, Vsesoyuz. Inst. Nauchn. i Tekhn. Inform., Moscow, 1988.

\bibitem[KL91]{KL91}
V. A. Kondrat'ev and E. M. Landis,
Qualitative theory of second order linear partial differential equations, in
\newblock {\em Encyclopedia of Math. Sci.}, Y. Egorov and M. A. Shubin (eds.), Vol. 32 (Partial differential equations III) 87--192, 220. Berlin: Springer-Verlag, 1991.

\bibitem[K22]{K22}
P.-Z. Kow,
On the Landis conjecture for the fractional Schr\"odinger equation. 
\newblock {\em J. Spectr. Theory} 12 (2022), no. 3, 1023--1077.

\bibitem[KW23]{KW23}
P.-Z. Kow and J.-N. Wang, 
Landis-type conjecture for the half-Laplacian.
\newblock {\em Proc. Amer. Math. Soc.} 151 (2023), no. 7, 2951--2962.

\bibitem[KLP25]{KLP25}
S. T. Krymskii, A. Logunov, and F. Pagano,
\newblock Eigenfunctions with double exponential rate of localization.
\newblock{\em arXiv:2501.15354}.

\bibitem[Ku12]{Ku12}
P. Kuchment,
\textit{Floquet Theory for Partial Differential Equations}. Operator Theory: Advances and Applications, vol. 60, Birkhauser,
2012.

\bibitem[K98]{K98}
I. Kukavica,
Quantitative uniqueness for second-order elliptic operators.
\newblock {\em Duke Math. J.} 91 (1998), no. 2, 225--240.

\bibitem[LB21]{LB21}
K. Le Balc'h,
Exponential bounds for gradient of solutions to linear elliptic and parabolic equations. 
\newblock {\em J. Funct. Anal.} 281 (2021), no. 5, Paper No. 109094, 29 pp.


\bibitem[LB24]{LB24}
K. Le Balc'h,
On the exponential decay for real-valued solutions to elliptic equations with singular potentials in the plane. 
\newblock {\em J. \'E. D. P.} (2024), Expos\'e nº VIII, 14p.

\bibitem[BS24]{BS24}
K. Le Balc'h and D. A. Souza,
Quantitative unique continuation for real-valued solutions to
second order elliptic equations in the plane. 
\newblock {\em arXiv:2401.00441}. 


\bibitem[LW14]{LW14}
C.-L. Lin and J.-N. Wang, Quantitative uniqueness estimates for the general second order elliptic equations.
\newblock {\em J. Funct. Anal.}, 266 (2014), 5108–--125.

\bibitem[Li22]{Li22}
L. Li,
Anderson--Bernoulli localization at large disorder on the $2D$ lattice.
\newblock {\em Comm. Math. Phys.} 393 (2022), no. 1, 151--214.

\bibitem[LZ22]{LZ22}
L. Li and L. Zhang,
Anderson--Bernoulli localization on the three-dimensional lattice and discrete unique continuation principle.
\newblock {\em Duke Math. J.} 171 (2022), no. 2, 327--415.

\bibitem[LMNN20]{LMNN20}
A. Logunov, E. Malinnikova, N. Nadirashvili, and F. Nazarov,
The Landis conjecture on exponential decay.
\newblock{\em arXiv:2007.07034}.

\bibitem[LN24]{LN24}
A. Logunov and F. Nazarov,
Personal communication, 2024.

\bibitem[LM18]{LM18} Y. Lyubarskii and E. Malinnikova, Sharp uniqueness results for discrete evolutions. \textit{Non-linear partial differential equations, mathematical physics, and stochastic analysis}, 423--436.
\newblock {\em EMS Ser. Congr. Rep.}, Eur. Math. Society (EMS), Z\"urich, 2018.

\bibitem[Ma23]{Ma23}
E. Malinnikova, 
Uniqueness results for solutions of continuous and discrete PDE. 
\textit{European Congress of Mathematics}, 325--336, EMS Press, Berlin, [2023].

\bibitem[Me89]{Me89}
V. Z. Meshkov, 
Weighted differential inequalities and their application for estimates of the decrease at infinity of the solutions of second-order elliptic equations. 
\newblock {\em (Russian)
Translated in Proc. Steklov Inst. Math.} 1992, no. 1, 145–166. Theory of functions (Russian) (Amberd, 1987).
Trudy Mat. Inst. Steklov. 190 (1989), 139--158.

\bibitem[Me91]{Me91}
V. Z. Meshkov, 
On the possible rate of decrease at infinity of the solutions of second-order partial differential equations. 
\newblock {\em (Russian)
Mat. Sb.} 182 (1991), no. 3, 364–-383; {\em translation in
Math. USSR-Sb.} 72 (1992), no. 2, 343--361.

\bibitem[Mi74]{Mi74}
K. Miller,
Nonunique continuation for uniformly parabolic and elliptic equations in self-adjoint divergence form with H\"older
continuous coefficients.
\newblock {\em  Arch. Rational Mech. Anal.} 54 (1974), 105--117.

\bibitem[P99]{P99}
Y. Pinchover,
Maximum and anti-maximum principles and eigenfunctions estimates via perturbation
theory of positive solutions of elliptic equations.
\newblock {\em Math. Ann.} 314 (1999), 555--590.

\bibitem[Plis63]{Plis63}
A. Pli\'s,
On non-uniqueness in Cauchy problem for an elliptic second order differential equation.
\newblock {\em Bull. Acad. Polon. Sci. Sér. Sci. Math. Astronom. Phys.} 11 (1963), 95--100.

\bibitem[RS78]{RS78}
M. Reed and B. Simon, 
\emph{Methods of modern mathematical physics. IV.} Analysis of operators. Academic Press [Harcourt Brace Jovanovich, Publishers], New York-London, 1978.

\bibitem[R21]{R21}
L. Rossi,
\newblock The Landis conjecture with sharp rate of decay.
\newblock {\em  Indiana Univ. Math. J.}, 70 (2021), 301--324.

\bibitem[RW19]{RW19}
A. R\"uland and J.-N. Wang,
\newblock On the fractional Landis conjecture.
\newblock {\em  J. Funct. Anal.}, 277 (2019), no. 9, 3236--3270.

\bibitem[SS83]{SS83}
 J. Saut and B. Scheurer,  
 \newblock Remarques sur un théorème de prolongement unique de Mizohata. (French) [Remarks on a unique continuation theorem of Mizohata]
\newblock {\em  C. R. Acad. Sci. Paris Sér. I Math.} 296 (1983), no. 6, 307--310.

 \bibitem[Sc22]{Sc22}
 W. Schlag, 
\newblock  An introduction to multiscale techniques in the theory of Anderson localization, Part I. 
\newblock {\em Nonlinear Anal.} 220 (2022), Paper No. 112869, 55 pp.

\bibitem[S92]{S92}
N. Shimakura, 
\emph{Partial differential operators of elliptic type.} Translated and revised from the 1978 Japanese original by the author, 
\newblock{Translations of Mathematical Monographs}, 99. American Mathematical Society, Providence, RI, 1992.

\bibitem[SVW88]{SVW88}
C. Shubin, T. Vakilian, and T. Wolff,
Some harmonic Analysis questions suggested by Anderson/Bernoulli models. 
\newblock {\em Geom. Funct. Anal.}  8, 932--964 (1988).

\bibitem[SS21]{SS21}
B. Sirakov and P. Souplet,
The V\'azquez maximum principle and the Landis conjecture for elliptic PDE with unbounded coefficients. 
\newblock {\em Adv. Math.} 387 (2021), Paper No. 107838, 27 pp.

\bibitem[W81]{W81}
 F. Wegner,
 Bounds on the density of states in disordered systems. 
\newblock {\em Z. Phys. B} 44 (1981), no. 1--2, 9--15.

\bibitem[Z16]{Z16}
J. Zhu,
Quantitative uniqueness of elliptic equations. 
\newblock {\em Amer. J. Math.} 138 (2016), no. 3, 733–-762.


\bibitem[ZZ24]{ZZ24}
J. Zhu and J. Zhuge,
The observability inequalities for heat equations with potentials. 
\newblock {\em arXiv:2409.09476}.

\end{thebibliography}
\end{document}